\newtheorem{theorem}{Theorem}[section]
\newtheorem{lemma}{Lemma}[section]
\theoremstyle{definition}
\theoremstyle{remark}
\newtheorem{remark}{Remark}[section]
\numberwithin{equation}{section}
\theoremstyle{plain}
\newtheorem{proposition}{Proposition}[section]
\begin{document}
\title[Functional Stable Limit Theorem for Gibbs-Markov maps]{\textbf{A Functional Stable Limit Theorem for Gibbs-Markov maps}\\
\vspace{0.2cm} }
\author{David Kocheim}
\address{Erste Group Bank AG, Am Belvedere 1, 1100 Wien,\\
Austria }
\email{david.kocheim@erstegroup.com}
\author{Fabian P\"{u}hringer}
\address{Fakult\"{a}t f\"{u}r Mathematik, Universit\"{a}t Wien, Oskar Morgenstern Platz
1, 1090 Wien, Austria }
\email{Fabian.Puehringer@gmx.at}
\author{Roland Zweim\"{u}ller}
\address{Fakult\"{a}t f\"{u}r Mathematik, Universit\"{a}t Wien, Oskar Morgenstern Platz
1, 1090 Wien, Austria}
\email{roland.zweimueller@univie.ac.at}
\thanks{\textit{Acknowledgement.} F.P. gratefully acknowledges support through
FWF-grant Y00782. R.Z. thanks Jon Aaronson, Alexey Korepanov and Ian Melbourne
for inspiring discussions related to this topic. We are also grateful to the
referee whose comments helped to improve the presentation.}
\subjclass[2010]{ Primary 28D05, 37A25, 37C30, 11K50.}
\keywords{Weak invariance principle, stable laws, stable L\'{e}vy processes, weakly
dependent processes, stationary sequences.}

\begin{abstract}
For a class of locally (but not necessarily uniformly) Lipschitz continuous
$d$-dimensional observables over a Gibbs-Markov system, we show that
convergence of (suitably normalized and centered) ergodic sums to a
non-Gaussian stable vector is equivalent to the distribution belonging to the
classical domain of attraction, and that it implies a weak invariance
principle in the (strong) Skorohod $\mathcal{J}_{1}$-topology on
$\mathcal{D}([0,\infty),\mathbb{R}^{d})$. The argument uses the classical
approach via finite-dimensional marginals and $\mathcal{J}_{1}$-tightness. As
applications, we record a Spitzer-type arcsine law for certain $\mathbb{Z}%
$-extensions of Gibbs-Markov systems, and prove an asymptotic independence
property of excursion processes of intermittent interval maps.

\end{abstract}
\maketitle

\section{Introduction}

The study of ergodic dynamical systems naturally leads to many questions about
stationary sequences with nontrivial dependence structure. If $T$ is a map
preserving a probability measure $\mu$ on $(X,\mathcal{A})$, and
$f:X\rightarrow\mathbb{R}^{d}$ is some \emph{observable} (that is, a
measurable function), the stationary sequence $(f\circ T^{k})_{k\geq0}$ on
$(X,\mathcal{A},\mu)$ is expected to exhibit properties similar to those of
iid sequences as soon as $T$ possesses sufficiently strong mixing properties,
and $f$ is regular enough. In particular, the sequence of \emph{ergodic sums}
$\mathbf{S}_{n}(f):=\sum_{k=0}^{n-1}f\circ T^{k}$ should behave like a
classical partial sum process, for which, among a multitude of other results,
functional limit theorems are available. Indeed, there is a well developed
theory clarifying the asymptotic behavior of such sequences in situations with
a Gaussian limit, see for example \cite{MPU}.\newline

In the present article we are interested in situations where $f$ has a heavy
tail, and its distribution $\mu\circ f^{-1}$ is in the \emph{domain of
attraction} \emph{of some non-Gaussian }$\alpha$\emph{-stable random vector}
$S$. (We refer to \cite{MS} and \cite{ST} for background information on stable
laws and processes, see also Section 2 below.) Then there exist two sequences
of constants $A_{n}\in\mathbb{R}^{d}$ and $B_{n}>0$, $n\geq1$, such that
distributional convergence%
\begin{equation}
\frac{1}{B_{n}}\,(\mathbf{S}_{n}(f)-A_{n})\Longrightarrow S\text{ \quad as
}n\rightarrow\infty\label{Eq_ADADAD}%
\end{equation}
takes place provided that $(f\circ T^{k})_{k\geq0}$ is an iid sequence. In the
classical iid setup, the \emph{Stable Limit Theorem (SLT)} (\ref{Eq_ADADAD})
automatically entails a \emph{Functional Stable Limit Theorem (FSLT)} (or
\emph{weak invariance principle}) in $(\mathcal{D}([0,\infty),\mathbb{R}%
^{d}),\mathcal{J}_{1})$, which asserts distributional convergence of the
\emph{partial sum processes} $\mathsf{S}^{[n]}$ given by
\begin{equation}
\mathsf{S}^{[n]}:X\rightarrow\mathcal{D}([0,\infty),\mathbb{R}^{d})\text{,
}n\geq1\text{, \quad}\mathsf{S}_{t}^{[n]}:=\frac{1}{B_{n}}\left(
\mathbf{S}_{\left\lfloor tn\right\rfloor }(f)-\frac{\left\lfloor
tn\right\rfloor }{n}A_{n}\right)  \text{,}\label{Eq_DefPartialSumProc}%
\end{equation}
to the $\alpha$-stable L\'{e}vy process $\mathsf{S}=(\mathsf{S}_{t})_{t\geq0}$
with $\mathsf{S}_{1}$ distributed like $S$ (see \cite{Sk}, \cite{Sk2}),
\begin{equation}
\mathsf{S}^{[n]}\Longrightarrow\mathsf{S}\text{ \quad in }(\mathcal{D}%
([0,\infty),\mathbb{R}^{d}),\mathcal{J}_{1})\text{.}\label{Eq_FSLT}%
\end{equation}
Here, $\mathcal{D}([0,\infty),\mathbb{R}^{d})$ is the \emph{Skorohod}
\emph{space} of right-continuous functions $\mathsf{x}:[0,\infty
)\rightarrow\mathbb{R}^{d}$ possessing left limits everywhere, and we always
use the \emph{Skorohod }$\mathcal{J}_{1}$\emph{-topology} (details
below).\newline

It is known that for dependent stationary sequences the $\mathcal{J}_{1}%
$\emph{-}FSLT does not, in general, follow from the SLT, not even if $d=1$
(see Examples 1.1 and 2.1 in \cite{T2}).

The main result of the present article, Theorem \ref{T_SLTgivesFSLT} below,
shows that a SLT (\ref{Eq_ADADAD}) for sufficiently regular vector-valued
observables over a \emph{Gibbs-Markov system} implies the corresponding FSLT
(\ref{Eq_FSLT}) in the $\mathcal{J}_{1}$-topology. The regularity condition we
use is that the tail of the cylinderwise Lipschitz constant should not be
heavier than the tail of $f$ itself.

Gibbs-Markov maps form an important basic class of systems. In this context,
SLTs have for example been established in \cite{AD}, \cite{Goue}, \cite{Goue2}
for certain real-valued observables $f$ which are Lipschitz\ on cylinders.
Based on work in \cite{T1}, the article \cite{T2} proves, for $d=1$, a $\mathcal{J}_{1}%
$\emph{-}FSLT (\ref{Eq_FSLT}) in certain dynamical situations, including that
of piecewise constant observables on a Gibbs-Markov system. 
In the case of stable laws of index $\alpha \in (1,2)$, a vector-valued FSLT
similar to our Theorem \ref{T_SLTgivesFSLT} is given in Section 4 of \cite{CFKM}.
This, too, is based on \cite{T1}. Our present result covers all $\alpha \in (0,2)$,
and the proof is independent of \cite{T1}.

Our Theorem \ref{T_FSL_GM} (which rephrases the $d=1$ case of Theorem \ref{T_SLTgivesFSLT}
in easily applicable form) gives a $\mathcal{J}_{1}$\emph{-}FSLT for a 
large class of real observables. (More on the relation to the FSLT\ of \cite{T2} in
Remark \ref{Rem_Marta} below.) The second special case of Theorem \ref{T_SLTgivesFSLT} we make explicit,
Theorem \ref{T_FSL_GMddim}, asserts that, for a vector-valued observable
$f=(f^{(1)},\ldots,f^{(d)})$ with individual components $f^{(j)}$ of said
regularity and with asymptotically proportional tails in the domains of
attraction of $\alpha$-stable laws, the partial sum processes converge, under
the $\mathcal{J}_{1}$-topology, to an independent tuple of scalar stable
L\'{e}vy motions, provided that the tails of the $f^{(j)}$ are determined on
non-overlapping sets. 

In a final section, we illustrate the use of these results. First, we apply
Theorem \ref{T_FSL_GM} in the setup of certain infinite measure preserving
skew products with Gibbs-Markov base to obtain a Spitzer-type arcsine law
(Theorem \ref{T_SpitzerTypeAsinus}). Second, for prototypical interval maps on
$[0,1]$ with two indifferent fixed points of the same order, at $x=0$ and
$x=1$, our Theorem \ref{T_FSL_GMddim} entails asymptotic independence between
the two processes of excursions to small neighbourhoods of $x=0$ and $x=1$,
respectively. Here, the maps may have finite or infinite invariant measure. We
require the fixed points to be so strong that those excursion processes have
non-Gaussian limits, see Theorem \ref{T_IntermittExcursionProc}.\newline

The approach used in \cite{T1}, \cite{T2} was to study the asymptotic
behaviour of the point processes which capture the occurrences of large
individual observations, thus proving convergence to the limit process via
L\'{e}vy-It\^{o}-type representations. In contrast, the present article
follows the classical \textquotedblleft convergence of marginals plus
tightness\textquotedblright\ approach.

\section{Background and Main results}

We begin by fixing notations and collecting the required background material.
For functions $\tau_{1},\tau_{2}$ and $c$ a constant, we write $\tau
_{1}(t)\sim c\tau_{2}(t)$ as $t\rightarrow\infty$ to indicate, as in
\cite{BGT}, that $\tau_{2}(t)>0$ for large $t$ and $\tau_{1}(t)/\tau
_{2}(t)\rightarrow c$, even in case $c=0$.\ \newline\newline%
\textbf{Distributional convergence.} If $R_{l}$, $l\geq1$, are Borel
measurable maps of $(X,\mathcal{A})$ into some metric space $(\mathfrak{E}%
,d_{\mathfrak{E}})$, while $\nu_{l}$, $l\geq1$, are probability measures on
$(X,\mathcal{A})$, and $R$ is another \emph{random element} of $\mathfrak{E}$
(defined on some $(\Omega,\mathcal{F},\Pr)$), then we write
\begin{equation}
R_{l}\overset{\nu_{l}}{\Longrightarrow}R\text{ \quad as }l\rightarrow\infty
\end{equation}
to indicate that $\nu_{l}\circ R_{l}^{-1}\Longrightarrow\Pr\circ R^{-1}$ (the
usual weak convergence of probability measures, \cite{Bi}). This is
\emph{distributional convergence} to $R$ of the $R_{l}$ when the latter
functions are regarded as random variables on the probability spaces
$(X,\mathcal{A},\nu_{l})$, respectively. It includes the case of a single
measure $\nu$, where $R_{l}\overset{\nu}{\Longrightarrow}R$ means that the
distributions $\nu\circ R_{l}^{-1}$ of the $R_{l}$ under $\nu$ converge weakly
to the law of $R$. If $\nu$ is understood, we may simply write $R_{l}%
\Longrightarrow R$.

The limit theorems we are to discuss are in fact instances of \emph{strong
distributional convergence} with respect to the invariant measure $\mu$
(terminology taken from \cite{A}) or \emph{mixing limit theorems}, meaning
that convergence $\overset{\nu}{\Longrightarrow}$ with respect to one
probability measure $\nu\ll\mu$ implies convergence w.r.t. all such measures,
see Theorems \ref{T_SLTgivesFSLT}, \ref{T_FSL_GM}, \ref{T_FSL_GMddim} and
Lemma \ref{L_StrongDistCge} below.\newline\newline\textbf{Stable random
vectors.} A random vector $S=(S^{(1)},\ldots,S^{(d)})$ (or its law) is
\emph{stable} if for any positive numbers $a,b$ there are constants $c>0$ and
$D\in\mathbb{R}^{d}$ such that $aS_{1}+bS_{2}\overset{d}{=}cS+D$, where
$S_{1},S_{2}$ are independent copies of $S$ and $\overset{d}{=}$ indicates
equality of distributions. This is equivalent to the assertion that there is
some $\alpha\in(0,2]$ such that for any $n\geq2$ there is some constant
$D_{n}\in\mathbb{R}^{d}$ for which $S_{1}+\ldots+S_{n}\overset{d}%
{=}n^{1/\alpha}S+D_{n}$, whenever $S_{1},S_{2},\ldots$ are independent copies
of $S$. In this case $S$ (or its law) is said to be $\alpha$\emph{-stable},
and $\alpha$ is the \emph{index} of $S$ (see \S 2.1 of \cite{ST}). $S$ is a
Gaussian iff $\alpha=2$. Stable vectors are exactly those random elements $S$
of $\mathbb{R}^{d}$ which occur as limits%
\begin{equation}
\frac{1}{B_{n}}\,\left(  \sum_{k=0}^{n-1}Z_{k}-A_{n}\right)  \Longrightarrow
S\text{ \quad as }n\rightarrow\infty\text{,}\label{Eq_ClassicalSLTNew}%
\end{equation}
for iid sequences $(Z_{k})_{k\geq0}$ of random vectors and constants $A_{n}%
\in\mathbb{R}^{d}$ and $B_{n}>0$. In this case, $Z_{0}$ (or its law $Q$) is
said to belong to the \emph{domain of attraction of }$S$, which we will
indicate by writing $Z_{0}$ (or $Q$) $\in\mathrm{DOA}(S)$ (see \S 7.3 in
\cite{MS}).

We call a Borel measure on $\mathbb{R}^{d}$ \emph{full} if it is not supported
on any proper affine subspace of $\mathbb{R}^{d}$. A $d$-dimensional random
vector is \emph{full} if its law is. The collection of all full stable random
vectors $S$ with index $\alpha\in(0,2)$ can be represented as a parametrized
family $\{S_{\alpha}(\Lambda,c)\}_{c\in\mathbb{R}^{d},\Lambda\in\Sigma}$ where
$c$ is the \emph{center} of $S_{\alpha}(\Lambda,c)$ and $\Lambda$ is its
\emph{spectral measure}. Here $\Sigma$ is the family of all finite full Borel
measures $\Lambda$ on $\mathbb{R}^{d}$ which are concentrated on the unit
sphere $\mathbb{S}^{d-1}$ (see Theorem 7.3.16 of \cite{MS}). The constant
vector $c$ is a simple location parameter such that $S_{\alpha}(\Lambda
,c)\overset{d}{=}S_{\alpha}(\Lambda,0)+c$, while $\Lambda$ encodes the tail
behaviour of $S=S_{\alpha}(\Lambda,c)$ in that
\begin{gather}
\Pr\left[  \left.  \frac{S}{\left\Vert S\right\Vert }\in D\,\right\vert
\,\left\Vert S\right\Vert >t\right]  \longrightarrow\frac{\Lambda(D)}%
{\Lambda(\mathbb{S}^{d-1})}\text{ \quad as }t\rightarrow\infty\text{,}%
\label{Eq_RegVarOfStableTails}\\
\text{for Borel }D\subseteq\mathbb{S}^{d-1}\text{ whose boundary \emph{in}
}\mathbb{S}^{d-1}\text{ is a null set for }\Lambda\text{.}\nonumber
\end{gather}
Moreover, as shown in \cite{R} (see also Theorem 8.2.18 of \cite{MS}), the
(law of a) random vector $Z$ belongs to $\mathrm{DOA}(S_{\alpha}(\Lambda,c))$
iff $V(t):=\Pr[\left\Vert Z\right\Vert >t]$, $t\geq0$, is regularly varying of
index $-\alpha$, and (\ref{Eq_RegVarOfStableTails}) holds with $S $ replaced
by $Z$.

In the special case of a scalar function $f:X\rightarrow\mathbb{R}$,
measurable on a probability space $(X,\mathcal{A},\mu)$, we thus see that its
distribution $\mu\circ f^{-1}$ is in the domain of attraction of some
non-Gaussian $\alpha$-stable random variable iff there are $\alpha\in(0,2) $,
a \emph{slowly varying} \emph{function} $\ell$ (that is, positive and
measurable with $\ell(\rho s)/\ell(s)\rightarrow1$ as $s\rightarrow\infty$ for
every $\rho>0$), and constants $c_{+},c_{-}\geq0$ with $c_{+}+c_{-}>0$ such
that, as $t\rightarrow\infty$,
\begin{equation}
\mu(f>t)=(c_{+}+o(1))t^{-\alpha}\ell(t)\text{\quad and}\quad\mu(f\leq
-t)=(c_{-}+o(1))t^{-\alpha}\ell(t)\text{.}\label{Eq_DomainAttr}%
\end{equation}
The law of a specific variable $S$ which partial sums are then attracted to is
given by the Fourier transform
\begin{equation}
\mathbb{E}[e^{itS}]=e^{-c_{\alpha}\underline{\beta}\,\left\vert t\right\vert
^{\alpha}(1-i\beta\,\mathrm{sgn}(t)\,\omega(\alpha,t))}\text{, \quad}%
t\in\mathbb{R}\text{, }\label{Eq_TheFourierTransformOfG}%
\end{equation}
where $c_{\alpha}:=\Gamma(1-\alpha)\cos(\alpha\pi/2)$ if $\alpha\neq1 $ and
$c_{\alpha}:=\pi/2$ if $\alpha=1$, while $\underline{\beta}:=c_{+}+c_{-}$,
$\overline{\beta}:=c_{+}-c_{-}$ and $\beta:=\overline{\beta}/\underline{\beta
}$, while $\omega_{\alpha}(t):=\tan(\alpha\pi/2)$ if $\alpha\neq1$ and
$\omega_{\alpha}(t):=-(2/\pi)\log\left\vert t\right\vert $ if $\alpha=1$.
While the limit laws of partial sums are only unique up to type, we will take
the limit to be this particular variable $S$. (This is the convention used in
\cite{AD}, \cite{Goue}, \cite{Goue2}.)

In this situation, partial sums of iid sequences $(Z_{k})$ with the same
distribution as $f$ satisfy an SLT (\ref{Eq_ClassicalSLTNew}) with sequences
$(A_{n})$ and $(B_{n})$ defined in terms of this distribution. Specifically,
one can take $(B_{n})$ so that
\begin{equation}
n\ell(B_{n})=B_{n}^{\alpha}\text{ \quad for }n\geq1\text{,}\label{Eq_DefBn}%
\end{equation}
and $A_{n}=0$ if $\alpha<1$, $A_{n}=n\int f\,d\mu$ if $\alpha>1$, while the
definition of $A_{n}$ is more complicated in case $\alpha=1$, see Section 6 of
\cite{AD}. In either case,
\begin{equation}
A_{n}=o(nB_{n})\text{ \quad as }n\rightarrow\infty\text{.}\label{Eq_Dings}%
\end{equation}
We shall refer to this choice of $(A_{n},B_{n})_{n\geq1}$ in $\mathbb{R}%
\times(0,\infty)$ as the \emph{canonical normalizing sequence for}
$[\alpha,c_{+},c_{-}]$, and to $S$ as the \emph{canonical limit (law) for
}$[\alpha,c_{+},c_{-}]$. (In the notation of \S 1.1 in \cite{ST}, we have
$S\overset{d}{=}S([c_{\alpha}(c_{+}-c_{-})]^{1/\alpha},(c_{+}-c_{-}%
)/(c_{+}+c_{-}),0)$.) \newline\newline\textbf{Gibbs-Markov systems.} A
\emph{piecewise invertible probability preserving system} is a tuple
$(X,\mathcal{A},\mu,T,\xi)$ where $T:X\rightarrow X$ \ is a measure preserving
map on the probability space $(X,\mathcal{A},\mu)$, and $\xi\subseteq
\mathcal{A}$ is a countable partition (mod $\mu$) of $X$ with $\mathcal{A}%
=\sigma(T^{-k}\xi:k\geq0)$ (mod $\mu$) and such that the restriction of $T$ to
any \emph{cylinder} $Z\in\xi$ is a measurably invertible map $T\mid
_{Z}:Z\rightarrow TZ$ with inverse $v_{Z}:TZ\rightarrow Z$. The partition and
the system are said to be \emph{Markov} if for each $Z\in\xi$ the image $TZ$
is measurable $\xi$ (mod $\mu$). It has the \emph{big image property} if
$\inf_{Z\in\xi}\mu(TZ)>0$. Write $\xi_{n}:=%
{\textstyle\bigvee\nolimits_{k=0}^{n-1}}
T^{-k}\xi$ for the family of \emph{cylinders of rank} $n\geq1$. We denote the
element of $\xi_{n}$ containing $x$ by $\xi_{n}(x):=\bigcap_{k=0}^{n-1}%
\xi(T^{k}x)$ (well defined for a.e. $x$).

The \emph{separation time} of two points $x,y\in X$ is $s(x,y):=\inf
\{n\geq1:\xi_{n}(x)\neq\xi_{n}(y)\}$. For a parameter $\theta\in(0,1)$ define
a \emph{dynamical metric} by letting $d_{\theta}(x,y):=\theta^{s(x,y)}$.
Evidently, $T$ is \emph{uniformly expanding} w.r.t. $d_{\theta}\ $in that
$d_{\theta}(x,y)=\theta$ $d_{\theta}(Tx,Ty)$ a.e. For $f:X\rightarrow
\mathfrak{F}$ (with $(\mathfrak{F},d_{\mathfrak{F}})$ some metric space) and
$W\subseteq X$ set $D_{W}(f):=\inf\{L>0:d_{\mathfrak{F}}(f(x),f(y))\leq
L\,d_{\theta}(x,y)$ for $x,y\in W\}$, the least Lipschitz constant of $f$ on
the set $W$, and $D_{\mathcal{W}}(f):=\sup_{W\in\mathcal{W}}D_{W}(f)$ if
$\mathcal{W}$ is a collection of sets. Call $f$ \emph{uniformly piecewise
Lipschitz} in case $D_{\xi}(f)<\infty$.

Consider the Radon-Nikodym derivatives $v_{Z}^{\prime}:TZ\rightarrow
\lbrack0,\infty)$, $Z\in\xi$, with $v_{Z}^{\prime}:=d(\mu\circ v_{Z})/d\mu$. A
Markov system $(X,\mathcal{A},\mu,T,\xi)$ with the big image property is said
to be \emph{Gibbs-Markov} if, in addition, $\sup_{Z\in\xi}D_{TZ}(\log\circ
v_{Z}^{\prime})<\infty$ (for suitable versions of the a.e. defined functions
$v_{Z}^{\prime}$). In this case a routine argument shows that the system has
\emph{bounded distortion} in that there is some $R\in\lbrack0,\infty)$ such
that
\begin{equation}
e^{-R}\,\frac{\mu(T^{n}Z\cap E)}{\mu(T^{n}Z)}\leq\frac{\mu(Z\cap T^{-n}E)}%
{\mu(Z)}\leq e^{R}\,\frac{\mu(T^{n}Z\cap E)}{\mu(T^{n}Z)}\text{ \quad}%
\begin{array}
[c]{l}%
\text{whenever }n\geq1\text{, }\\
Z\in\xi_{n}\text{, and }E\in\mathcal{A}\text{.}%
\end{array}
\label{Eq_BddDistortion}%
\end{equation}

For observables $f$ taking values in some normed space $(\mathfrak{F}%
,\left\Vert \centerdot\right\Vert )$, the \emph{ergodic sums} will be denoted
$\mathbf{S}_{n}(f):=\sum_{k=0}^{n-1}f\circ T^{k}$, $n\geq1$. If $f$ is
understood, we will simply abbreviate $S_{n}:=\mathbf{S}_{n}(f)$. To deal with
observables $f:X\rightarrow\mathfrak{F}$ which are not necessarily
\emph{uniformly} piecewise Lipschitz, we consider the associated $\xi
$-measurable function
\begin{equation}
\vartheta_{f}:X\rightarrow\lbrack0,\infty]\text{, \quad}\vartheta_{f}:=%
{\textstyle\sum\nolimits_{Z\in\xi}}
D_{Z}(f)1_{Z}%
\end{equation}
(defined almost everywhere), which collects the best Lipschitz constants on
individual rank-one cylinders. If $\vartheta_{f}$ is unbounded, the decay rate
of its tail $\mu(\vartheta_{f}>t)$ as $t\rightarrow\infty$ provides a
meaningful way of quantifying the overall regularity of the function. The
stable limit theorems for real-valued observables obtained in \cite{AD}, which
assume boundedness of $\vartheta_{f}$, have been extended significantly in
\cite{Goue2}, where it is shown that for $f:X\rightarrow\mathbb{R}$ in the
domain of attraction of a stable random variable $S$, the assumption that
$\int\vartheta_{f}^{\eta}\,d\mu<\infty$ for some $\eta\in(0,1]$ is sufficient
for the SLT (\ref{Eq_ADADAD}).\newline\newline\textbf{Link to the iid case.}
The result just quoted is based on the insight (Theorem 1.5 of \cite{Goue2})
that (excluding the square-integrable Gaussian case) for such $f$ the SLT for
the dynamical system holds iff it holds for the partial sums of an iid
sequence of random variables with the same distribution as $f$. This easily
extends from the scalar case studied in \cite{Goue2} to the situation of
$d$-dimensional random vectors.

\begin{proposition}
[\textbf{Nondegenerate distribtional limits - GM versus iid}]%
\label{Prop_GMvsIID}Let $(X,\mathcal{A},\mu,T,\xi)$ be a mixing probability
preserving Gibbs-Markov system, and $f=(f^{(1)},\ldots,f^{(d)}):X\rightarrow
\mathbb{R}^{d}$ an observable satisfying $\int\vartheta_{f}^{\eta}%
\,d\mu<\infty$ for some $\eta\in(0,1]$. Suppose that $S$ is a full
$d$-dimensional random vector, and $(A_{n},B_{n})_{n\geq1}$ a sequence in
$\mathbb{R}^{d}\times(0,\infty)$ such that $\sqrt{n}=o(B_{n})$ as
$n\rightarrow\infty$.

Let $(Z_{k})_{k\geq0}$ be an iid sequence with $Z_{0}\overset{d}{=}f$. Then
\begin{equation}
\frac{1}{B_{n}}\,(\mathbf{S}_{n}(f)-A_{n})\overset{\mu}{\Longrightarrow
}S\text{ \quad as }n\rightarrow\infty\label{Eq_dSLT_G}%
\end{equation}
iff
\begin{equation}
\frac{1}{B_{n}}\,\left(  \sum_{k=0}^{n-1}Z_{k}-A_{n}\right)  \Longrightarrow
S\text{ \quad as }n\rightarrow\infty\text{.}\label{Eq_dSLT_i}%
\end{equation}
If (\ref{Eq_dSLT_G}) and (\ref{Eq_dSLT_i}) hold, then $S$ is a stable random
vector. Specifically, if $S$ is $\alpha$-stable, $\alpha$ $\in(0,2)$, then
$(B_{n})$ is regularly varying of index $1/\alpha$, while each $\tau_{f^{(i)}%
}(s):=\mu(\left\vert f^{(i)}\right\vert >s)$ is regularly varying of index
$-\alpha$ and satisfies%
\begin{equation}
n\,\tau_{f^{(i)}}(B_{n})\rightarrow c^{(i)}\text{ \quad as }n\rightarrow
\infty\label{Eq_shvfbjsfdbvjhdbbbbbbyyyyyyyyyyy2}%
\end{equation}
for some $c^{(i)}\in(0,\infty)$. Moreover, the $A_{n}=(A_{n}^{(1)}%
,\ldots,A_{n}^{(d)})$ satisfy
\begin{equation}
A_{n}^{(i)}=o(nB_{n})\text{ \quad as }n\rightarrow\infty\text{.}%
\label{Eq_hjjhjhjhjhjhjjjjjjjjjjjjjjjjjjjjjjjjjjjjjjjjjjj2}%
\end{equation}

\end{proposition}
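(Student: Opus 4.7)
The plan is to reduce the proposition to Gouëzel's scalar equivalence (Theorem 1.5 of \cite{Goue2}) via the Cramér-Wold device, and then to read off the remaining assertions from the classical theory of multivariate stable attraction.

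First I would observe that, for every $\theta\in\mathbb{R}^{d}$, the scalar observable $f_{\theta}:=\langle\theta,f\rangle$ satisfies $|f_{\theta}(x)-f_{\theta}(y)|\leq|\theta|\,\Vert f(x)-f(y)\Vert$, hence $\vartheta_{f_{\theta}}\leq|\theta|\,\vartheta_{f}$ almost everywhere and $\int\vartheta_{f_{\theta}}^{\eta}\,d\mu<\infty$. Moreover $\mathbf{S}_{n}(f_{\theta})=\langle\theta,\mathbf{S}_{n}(f)\rangle$, and an identical identity holds for the iid partial sums $\sum_{k<n}\langle\theta,Z_{k}\rangle$. For $\theta\neq0$, fullness of $S$ makes $\langle\theta,S\rangle$ non-degenerate, while $\sqrt{n}=o(B_{n})$ rules out the square-integrable Gaussian regime excluded from \cite{Goue2}. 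Consequently that reference supplies, for every such $\theta$, the scalar equivalence
\begin{equation*}
\frac{\mathbf{S}_{n}(f_{\theta})-\langle\theta,A_{n}\rangle}{B_{n}}\overset{\mu}{\Longrightarrow}\langle\theta,S\rangle\quad\Longleftrightarrow\quad\frac{1}{B_{n}}\sum_{k=0}^{n-1}\langle\theta,Z_{k}\rangle-\frac{\langle\theta,A_{n}\rangle}{B_{n}}\Longrightarrow\langle\theta,S\rangle,
\end{equation*}
and the Cramér-Wold theorem then assembles these (as $\theta$ varies over $\mathbb{R}^{d}$) into the vector equivalence (\ref{Eq_dSLT_G})$\Leftrightarrow$(\ref{Eq_dSLT_i}).

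Assuming both convergences hold, $S$ is the weak limit of affinely normalized iid partial sums of a full $d$-dimensional distribution, so classical Lévy theory (see e.g.\ \cite{MS}) forces $S$ to be stable of some index $\alpha\in(0,2]$. For the `Specifically' part of the statement one now restricts to $\alpha\in(0,2)$. Fullness of $S$ then ensures that every marginal $S^{(i)}$ is non-degenerate $\alpha$-stable, so specialising the projection argument to $\theta=e_{i}$ places $f^{(i)}$ in the classical one-dimensional domain of attraction of $S^{(i)}$; this yields regular variation of $\tau_{f^{(i)}}$ of index $-\alpha$ together with (\ref{Eq_shvfbjsfdbvjhdbbbbbbyyyyyyyyyyy2}) for some $c^{(i)}\in(0,\infty)$, and Karamata inversion gives regular variation of $(B_{n})$ of index $1/\alpha$.

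For the centering bound (\ref{Eq_hjjhjhjhjhjhjjjjjjjjjjjjjjjjjjjjjjjjjjjjjjjjjjj2}), I would invoke that in the scalar iid $\alpha$-stable domain of attraction a canonical centering can be taken to be $0$ (for $\alpha<1$), $n\mathbb{E}[f^{(i)}]$ (for $\alpha>1$), or a truncated-mean expression of smaller order (for $\alpha=1$); in every case this canonical centering is $o(nB_{n})$ because $B_{n}\to\infty$ faster than $\sqrt{n}$, and a convergence-of-types argument transfers the bound to the given admissible sequence $(A_{n}^{(i)})$. The main obstacle lies in the first paragraph: one must verify that Gouëzel's scalar equivalence really applies simultaneously to every projection $f_{\theta}$, the decisive points being that $\sqrt{n}=o(B_{n})$ is the quantitative form of his non-Gaussian proviso and that the regularity hypothesis on $\vartheta_{f}$ transfers to every $\vartheta_{f_{\theta}}$; the remainder is a routine componentwise invocation of multivariate stable limit theory.
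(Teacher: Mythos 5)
Your proposal is correct and follows essentially the same route as the paper's own proof: reduce to Gou\"{e}zel's scalar equivalence by projecting (using $\vartheta_{\psi\circ f}\leq\Vert\psi\Vert\,\vartheta_{f}$, non-degeneracy of $\langle\theta,S\rangle$ from fullness, and $\sqrt{n}=o(B_{n})$ to force the infinite-variance case of Theorem 1.5 of \cite{Goue2}), assemble via Cram\'{e}r--Wold, and then obtain stability, the tail conditions and $A_{n}^{(i)}=o(nB_{n})$ from classical one-dimensional stable limit theory applied to the coordinates. The only place the paper is slightly more explicit is in verifying $\int(\psi\circ f)^{2}\,d\mu=\infty$ separately under each of the two hypotheses (via the classical CLT in the iid direction, and via the $L_{2}$-case of Gou\"{e}zel's theorem in the dynamical direction), but your remark that $\sqrt{n}=o(B_{n})$ together with non-degeneracy of the limit excludes the square-integrable regime amounts to the same argument.
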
%

\vspace{0.2cm}%

\begin{proof}
\textbf{(i)}\ Take any non-zero linear form $\psi:\mathbb{R}^{d}%
\rightarrow\mathbb{R}$. Due to $\vartheta_{\psi\circ f}\leq\left\Vert
\psi\right\Vert \vartheta_{f}$ we have $\int\vartheta_{\psi\circ f}^{\eta
}\,d\mu<\infty$. Hence the function $\psi\circ f:X\rightarrow\mathbb{R}$
belongs to the family of scalar observables studied in \cite{Goue2}.

Assume (\ref{Eq_dSLT_i}). Then by linearity and continuity of $\psi$,
\begin{equation}
\frac{1}{B_{n}}\,\left(  \sum_{k=0}^{n-1}\psi(Z_{k})-\psi(A_{n})\right)
\Longrightarrow\psi(S)\text{ \quad as }n\rightarrow\infty\text{,}%
\label{Eq_dSLT_psi_i}%
\end{equation}
and the classical CLT shows that $\int(\psi\circ f)^{2}\,d\mu=\mathbb{E}%
[\psi(Z_{k})^{2}]=\infty$ since $\sqrt{n}=o(B_{n})$ and $\psi(S)$ is
non-degenerate. On the other hand, if we assume (\ref{Eq_dSLT_G}), then the
$L_{2}(\mu)$-case of Theorem 1.5 in \cite{Goue2} shows that $\int(\psi\circ
f)^{2}\,d\mu=\infty$. Consequently, either of (\ref{Eq_dSLT_G}) and
(\ref{Eq_dSLT_i}) implies that the infinite variance case of Theorem 1.5 in
\cite{Goue2} applies to $\psi\circ f$. The latter states that
(\ref{Eq_dSLT_psi_i}) holds iff
\begin{equation}
\frac{1}{B_{n}}\,\left(  \mathbf{S}_{n}(\psi\circ f)-\psi(A_{n})\right)
\Longrightarrow\psi(S)\text{ \quad as }n\rightarrow\infty\text{.}%
\label{Eq_dSLT_psi_G}%
\end{equation}
\textbf{(ii)} By Cram\'{e}r-Wold, (\ref{Eq_dSLT_G}) is equivalent to the
statement that (\ref{Eq_dSLT_psi_G}) holds for all $\psi$, while
(\ref{Eq_dSLT_i}) is equivalent to the assertion that (\ref{Eq_dSLT_psi_i}) is
valid for all $\psi$. Therefore equivalence of (\ref{Eq_dSLT_G}) and
(\ref{Eq_dSLT_i}) follows from step (i).\newline\newline\textbf{(iii)} As
mentioned before, (\ref{Eq_dSLT_i}) implies stability of $S $. Write
$S=(S^{(1)},\ldots,S^{(d)})$, and fix any $i\in\{1,\ldots,d\}$. As a special
case of (i) we see that $f^{(i)}$ is in the domain of attraction of $S^{(i)}$,
and in view of the explicit description (\ref{Eq_DomainAttr}) of scalar
domains of attraction above, we conclude that the tail $\tau_{f^{(i)}}$ of
$f^{(i)}$ is regularly varying of index $-\alpha$. Moreover, $(A_{n}%
^{(i)},B_{n})$ is a normalizing sequence for the partial sums of an iid
sequence of variables distributed like $f^{(i)}$. By the standard
one-dimensional convergence of types theorem (for example, Theorem II.10.2 of
\cite{GK}), $(B_{n})$ here is asymptotically proportional to the canonical
normalizing sequence in (\ref{Eq_DefBn}), and hence regularly varying of index
$1/\alpha$. Combining (\ref{Eq_DomainAttr}) and (\ref{Eq_DefBn}) we get
(\ref{Eq_shvfbjsfdbvjhdbbbbbbyyyyyyyyyyy2}). Regarding $(A_{n}^{(i)})$,
convergence of types shows that (since $B_{n}\rightarrow\infty$), statement
(\ref{Eq_hjjhjhjhjhjhjjjjjjjjjjjjjjjjjjjjjjjjjjjjjjjjjjj2}) follows from the
corresponding statement (\ref{Eq_Dings}) for the canonical normalization.
\end{proof}%

\vspace{0.2cm}%

\noindent
\textbf{Skorohod spaces.} Let $(\mathfrak{F},d_{\mathfrak{F}})$ be a metric
space. Recall that $\mathcal{D}([0,1],\mathfrak{F})$ is the space of
right-continuous real functions $\mathsf{x}:[0,1]\rightarrow\mathfrak{F}$
possessing left limits everywhere (\emph{cadlag} functions). Denote functions
to be regarded as elements of the path space $\mathcal{D}([0,1],\mathfrak{F})$
by $\mathsf{x},\mathsf{y}$ and their values at $t$ by $\mathsf{x}%
_{t},\mathsf{y}_{t}$. Equip the space $\mathcal{D}([0,1],\mathfrak{F})$ with
the standard \emph{Skorohod }$\mathcal{J}_{1}$\emph{-topology} (see \cite{Bi},
\cite{GS}, \cite{Sk}, or \cite{W}). Two functions $\mathsf{x},\mathsf{y}%
\in\mathcal{D}([0,1],\mathfrak{F})$ are close in this topology if they are
uniformly close after a small distortion of the domain. Formally, let
$\Lambda$ be the set of increasing homeomorphisms $\lambda:[0,1]\rightarrow
\lbrack0,1]$, and let $\lambda_{id}\in\Lambda$ denote the identity. Then
$d_{\mathcal{J}_{1}}(\mathsf{x},\mathsf{y})=d_{\mathcal{J}_{1},1}%
(\mathsf{x},\mathsf{y}):=\inf_{\lambda\in\Lambda}\{\sup_{[0,1]}d_{\mathfrak{F}%
}(\mathsf{x}\circ\lambda,\mathsf{y})\vee\sup_{\lbrack0,1]}\left\vert
\lambda-\lambda_{id}\right\vert \}$ defines a metric on $\mathcal{D}%
([0,1],\mathfrak{F})$ which induces the $\mathcal{J}_{1}$-topology. While its
restriction to $\mathcal{C}([0,1],\mathfrak{F})$ coincides with the uniform
topology, discontinuous functions are $\mathcal{J}_{1}$-close to each other
only if they have similar jumps at nearby positions.

For any $s>0$ the space $(\mathcal{D}([0,s],\mathfrak{F}),\mathcal{J}_{1})$ of
cadlag functions on $[0,s]$ is defined analogously, with metric
$d_{\mathcal{J}_{1},s}$. To obtain the $\mathcal{J}_{1}$-topology on
$\mathcal{D}([0,\infty),\mathfrak{F})$, the space of all cadlag functions
$\mathsf{x}:[0,\infty)\rightarrow\mathfrak{F}$, we use $d_{\mathcal{J}%
_{1},\infty}:=\int_{0}^{\infty}e^{-s}(1\wedge d_{\mathcal{J}_{1},s})\,ds$.
Then, convergence $\mathsf{x}^{[n]}\rightarrow\mathsf{x}$ as $n\rightarrow
\infty$ in $(\mathcal{D}([0,\infty),\mathfrak{F}),\mathcal{J}_{1})$ means
$\mathsf{x}^{[n]}\rightarrow\mathsf{x}$ in $(\mathcal{D}([0,s],\mathfrak{F}%
),\mathcal{J}_{1})$ for every continuity point $s>0$ of $\mathsf{x}$ (we
simply denote the restriction of $\mathsf{x}\in\mathcal{D}([0,\infty
),\mathfrak{F})$ to $[0,s]$ by $\mathsf{x}$ again).

We will study situations in which $\mathfrak{F}=\mathbb{R}$ or $\mathbb{R}%
^{d}$. In the latter case, the standard $\mathcal{J}_{1}$-topology on
$\mathcal{D}([0,\infty),\mathbb{R}^{d})$ is sometimes called the \emph{strong}
$\mathcal{J}_{1}$-topology, in order to distinguish it from the \emph{weak}
$\mathcal{J}_{1}$-topology on $\mathcal{D}([0,\infty),\mathbb{R}^{d})$ given
by componentwise convergence in $\mathcal{D}([0,\infty),\mathbb{R})$, see
Section 3.3 of \cite{W}. The random elements of $\mathcal{D}([0,1],\mathbb{R}%
^{d})$ or $\mathcal{D}([0,\infty),\mathbb{R}^{d})$ of interest in this paper
will be denoted by $\mathsf{S},\mathsf{S}^{[n]},\ldots$ with corresponding
coordinate variables $\mathsf{S}_{t},\mathsf{S}_{t}^{[n]}\ $etc.\newline%
\newline\newline\newline\textbf{SLT implies the }$\mathcal{J}_{1}%
$\textbf{-FSLT.} The core result of the present paper shows that an SLT for
$d$-dimensional observables of reasonable regularity over a Gibbs-Markov
system automatically entails a functional version in the (strong)
$\mathcal{J}_{1}$-topology of $\mathcal{D}([0,\infty),\mathbb{R}^{d})$.

\begin{theorem}
[\textbf{SLT implies }$\mathcal{J}_{1}$\textbf{-FSLT for Gibbs-Markov maps}%
]\label{T_SLTgivesFSLT}Let $(X,\mathcal{A},\mu,T,\xi)$ be a mixing probability
preserving Gibbs-Markov system, and $f:X\rightarrow\mathbb{R}^{d}$ an
observable satisfying $\mu(\vartheta_{f}>t)=O(\mu(\left\Vert f\right\Vert
>t))$. Assume that $\mu\circ f^{-1}\in\mathrm{DOA}(S)$ for some full $\alpha
$-stable random vector $S$ in $\mathbb{R}^{d}$ with $\alpha\in(0,2)$, so that
there are $(A_{n},B_{n})\in\mathbb{R}^{d}\times(0,\infty)$, $n\geq1$, such
that
\begin{equation}
\frac{1}{B_{n}}\,(\mathbf{S}_{n}(f)-A_{n})\overset{\mu}{\Longrightarrow
}S\text{ \quad as }n\rightarrow\infty\text{.}\label{Eq_BasicAssmThmImpli}%
\end{equation}
Then the partial sum processes $\mathsf{S}^{[n]}=(\mathsf{S}_{t}^{[n]}%
)_{t\geq0}$ given by
\begin{equation}
\mathsf{S}^{[n]}:X\rightarrow\mathcal{D}([0,\infty),\mathbb{R}^{d})\text{,
}n\geq1\text{, \quad}\mathsf{S}_{t}^{[n]}:=\frac{1}{B_{n}}\left(
\mathbf{S}_{\left\lfloor tn\right\rfloor }(f)-\frac{\left\lfloor
tn\right\rfloor }{n}A_{n}\right)  \text{,}\label{Eq_DefPSprocesses2}%
\end{equation}
converge to the $\alpha$-stable L\'{e}vy process $\mathsf{S}=(\mathsf{S}%
_{t})_{t\geq0}$ with $\mathsf{S}_{1}\overset{d}{=}S$ in that
\begin{equation}
\mathsf{S}^{[n]}\overset{\nu}{\Longrightarrow}\mathsf{S}\text{ \quad in
}(\mathcal{D}([0,\infty),\mathbb{R}^{d}),\mathcal{J}_{1}%
)\label{Eq_FSLTinThmImpli}%
\end{equation}
for every probability measure $\nu\ll\mu$ on $(X,\mathcal{A})$.
\end{theorem}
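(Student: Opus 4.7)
The plan is to follow the classical \emph{marginals plus tightness} route. Since $\mathcal{J}_{1}$-convergence in $\mathcal{D}([0,\infty),\mathbb{R}^{d})$ reduces to $\mathcal{J}_{1}$-convergence on each $\mathcal{D}([0,N],\mathbb{R}^{d})$ ($N$ a continuity point of $\mathsf{S}$), I fix $N=1$. Having proved $\mathsf{S}^{[n]}\overset{\mu}{\Longrightarrow}\mathsf{S}$ in $(\mathcal{D}([0,1],\mathbb{R}^{d}),\mathcal{J}_{1})$, the extension to arbitrary $\nu\ll\mu$ is a routine invocation of Lemma \ref{L_StrongDistCge}. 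Thus everything reduces to establishing (A) convergence of the finite-dimensional distributions of $\mathsf{S}^{[n]}$ under $\mu$, and (B) $\mathcal{J}_{1}$-tightness of the laws $\mu\circ(\mathsf{S}^{[n]})^{-1}$.

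For (A), fix $0=t_{0}<t_{1}<\cdots<t_{k}\leq1$. By the Cram\'{e}r-Wold device it suffices to show that for all $u_{1},\ldots,u_{k}\in\mathbb{R}^{d}$ the scalar random variables $\sum_{j=1}^{k}\langle u_{j},\mathsf{S}_{t_{j}}^{[n]}-\mathsf{S}_{t_{j-1}}^{[n]}\rangle$ converge in law to the corresponding sum for $\mathsf{S}$, which is a sum of \emph{independent} stable increments. The $j$-th increment of $\mathsf{S}^{[n]}$ equals $B_{n}^{-1}\mathbf{S}_{m_{j}}(f)\circ T^{n_{j-1}}-(m_{j}/n)B_{n}^{-1}A_{n}$ with $n_{j}=\lfloor t_{j}n\rfloor$ and $m_{j}=n_{j}-n_{j-1}$; by $T$-invariance of $\mu$ this is equidistributed with $B_{n}^{-1}(\mathbf{S}_{m_{j}}(f)-(m_{j}/n)A_{n})$, and the SLT (\ref{Eq_BasicAssmThmImpli}), together with the regular variation of $(B_{n})$ of index $1/\alpha$ from Proposition \ref{Prop_GMvsIID}, delivers the correct one-dimensional stable marginal. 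Asymptotic independence of the $k$ increments is then proved by inserting short buffers of length $o(n)$ between the blocks, replacing $f$ on rank-$r_{n}$ cylinders by its cylinder average (permissible since the $d_{\theta}$-diameter of a $\xi_{r}$-cylinder is $\theta^{r}$, so the error is controlled by $\vartheta_{f}$ which is tame by the hypothesis $\mu(\vartheta_{f}>t)=O(\mu(\Vert f\Vert>t))$), and applying exponential decay of correlations of the Gibbs-Markov map on Lipschitz observables to decouple the joint characteristic functions into a product.

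For (B), I invoke the classical criterion (Theorem 13.2 of \cite{Bi}) that $\mathcal{J}_{1}$-tightness on $[0,1]$ is implied by tightness of $\mathsf{S}_{0}^{[n]}$ together with $\lim_{\delta\downarrow0}\limsup_{n\to\infty}\mu(w''(\mathsf{S}^{[n]},\delta)>\varepsilon)=0$ for every $\varepsilon>0$, where $w''$ is Billingsley's modulus penalizing \emph{two} oscillations of size $>\varepsilon$ within a time-window of length $\delta$. Truncating $f=f_{L}+f_{L}^{c}$ at level $L=\varepsilon B_{n}$, I control the small part by an $L^{2}$-variance estimate for Gibbs-Markov ergodic sums (available from bounded distortion and exponential mixing) and the big part via the pair bound
\begin{equation*}
\sum_{\substack{0\leq i<j\leq n\\ j-i\leq\delta n}}\mu\bigl(\Vert f\circ T^{i}\Vert>\varepsilon B_{n},\ \Vert f\circ T^{j}\Vert>\varepsilon B_{n}\bigr)\leq C\,\delta\,n^{2}\mu(\Vert f\Vert>\varepsilon B_{n})^{2}=O(\delta),
\end{equation*}
where the final identity uses that $n\,\mu(\Vert f\Vert>\varepsilon B_{n})$ stays bounded by Proposition \ref{Prop_GMvsIID} applied componentwise.

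The main obstacle is this last pair estimate. In the iid case it is a triviality; in the Gibbs-Markov setting the event $\{\Vert f\Vert>\varepsilon B_{n}\}$ is not $\xi$-measurable, so product-type bounds do not immediately follow from bounded distortion. The hypothesis $\mu(\vartheta_{f}>t)=O(\mu(\Vert f\Vert>t))$ is exactly what is needed to approximate $\{\Vert f\Vert>\varepsilon B_{n}\}$ by a $\xi$-measurable set at a tail cost comparable to $\mu(\Vert f\Vert>\varepsilon B_{n})$ itself; after this replacement, (\ref{Eq_BddDistortion}) and an iteration of the Markov structure yield the desired product bound up to a constant $e^{2R}$. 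Everything else in the argument --- the cylinder-average approximation in step (A) and the $L^{2}$-control of the small part in step (B) --- relies on the same triad of bounded distortion, exponential mixing on Lipschitz functions, and the cylinder-Lipschitz regularity quantified by $\vartheta_{f}$.
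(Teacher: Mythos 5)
Your overall skeleton (reduce to $[0,1]$, prove convergence of finite-dimensional marginals plus $\mathcal{J}_1$-tightness, then upgrade to all $\nu\ll\mu$ via Lemma \ref{L_StrongDistCge}) is exactly the paper's, and your identification of the key difficulty in the pair bound --- that $\{\Vert f\Vert>\varepsilon B_n\}$ is not $\xi$-measurable, but can be replaced by its $\xi$-measurable hull at a tail cost of order $\mu(\vartheta_f>t)=O(\mu(\Vert f\Vert>t))$, after which (\ref{Eq_BddDistortion}) gives a product bound --- is correct and is genuinely in the spirit of the paper's use of $\vartheta_f$. The implementations of both halves, however, differ from the paper's, and the tightness half has a real gap. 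For the marginals the paper does not use buffers or decay of correlations at all: it conditions on a $\xi_{\lfloor tn\rfloor}$-measurable event, pushes the conditional density forward by $\widehat T^{\lfloor tn\rfloor}$ into the $L_1$-compact $\widehat T$-invariant set $\mathfrak{H}$ of Lemma \ref{L_CptEmbedding}, and concludes by the mean-ergodic-theorem-based uniformity of Lemma \ref{L_ChangeMeas}. Your characteristic-function factorization can be made to work, but note that the block exponentials are $\xi_m$-measurable with $m\asymp n$, hence have Lipschitz constant of order $\theta^{-m}$; the decoupling must go through $\widehat T^m$ applied to the block (which lands in $\mathfrak{H}$), not through a naive Lipschitz-versus-$L^1$ correlation bound, since otherwise the buffer would have to be of length comparable to $n$.

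The genuine gap is your $L^2$ control of the truncated part in the tightness argument. The truncation $f_L$ (even if performed $\xi$-measurably) is \emph{not} uniformly piecewise Lipschitz: the hypotheses only control the tail of $\vartheta_f$, and $\vartheta_f$ may be arbitrarily large on cylinders where $\Vert f\Vert$ is small, so $D_\xi(f_L)=\infty$ is possible and $\int\vartheta_{f_L}\,d\mu$ need not be finite (only $\int\vartheta_f^\eta\,d\mu<\infty$ is available). Consequently "bounded distortion and exponential mixing" do not deliver $\sum_m|\mathrm{Cov}(f_L,f_L\circ T^m)|=O(\Vert f_L\Vert_2^2)$, which is what your Chebyshev bound needs (a bound of the form $C\theta^m\Vert f_L\Vert_\infty\Vert f_L\Vert_{\mathrm{Lip}}$ summed over $m$ gives $nL=\varepsilon' nB_n$, which is \emph{not} $o((\varepsilon B_n)^2)$ when $\alpha>1$); moreover you need a maximal inequality over blocks, not just a variance bound for the endpoint sum. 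This is precisely the obstruction that the paper's Lemma \ref{L_MaxIneq1} is built to avoid: its Ottaviani-type inequalities, obtained from a first-entrance decomposition into cylinders plus (\ref{Eq_UpperBoundDistortion}), bound $\mu(\max_{k\le n}\Vert S_k\Vert>\kappa)$ and the double-oscillation probability directly by the marginal quantities $\max_k\mu(\Vert S_k\Vert>\kappa/4)$ (controlled by the SLT itself) and $n\max_k\mu(\vartheta_{g,k}>\kappa/4)$ (controlled by Lemmas \ref{L_TailControlForConvolutionSums} and \ref{L_ThetaFTail}), with no truncation, no second moments and no mixing rates. A secondary omission: you nowhere address the centering, i.e.\ the discrepancy between $\tfrac{k}{n}A_n$ in $\mathsf{S}^{[n]}_{k/n}$ and the $A_k$ appearing in the SLT at time $k$; this is harmless when $A_n$ is linear in $n$ but requires the uniform estimate of Lemma \ref{L_TheAn} in the asymmetric $\alpha=1$ case, both for the one-dimensional increment marginals and for tightness.
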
%

\vspace{0.2cm}%

\begin{remark}
The assumption that $\mu\circ f^{-1}\in\mathrm{DOA}(S)$ entails regular
variation with index $-\alpha$ of $t\mapsto\mu(\left\Vert f\right\Vert >t) $.
Therefore, for any $\eta\in(0,\alpha\wedge1)$, we have $\int\left\Vert
f\right\Vert ^{\eta}\,d\mu<\infty$, and since $\mu(\vartheta_{f}%
>t)=O(\mu(\left\Vert f\right\Vert >t))$ the observable also satisfies the
condition $\int\vartheta_{f}^{\eta}\,d\mu<\infty$ from Proposition
\ref{Prop_GMvsIID}. We do not know whether the latter alone is sufficient for
the conclusion of Theorem \ref{T_SLTgivesFSLT}.
\end{remark}

\begin{remark}
This result remains valid if $\mathsf{S}^{[n]}$ is replaced by $\overline
{\mathsf{S}}^{[n]}=(\overline{\mathsf{S}}_{t}^{[n]})_{t\geq0}$ with
\begin{equation}
\overline{\mathsf{S}}^{[n]}:X\rightarrow\mathcal{D}([0,\infty),\mathbb{R}%
^{d})\text{, }n\geq1\text{, \quad}\overline{\mathsf{S}}_{t}^{[n]}:=\frac
{1}{B_{n}}\left(  \mathbf{S}_{\left\lfloor tn\right\rfloor }(f)-tA_{n}\right)
\text{,}%
\end{equation}
a variant of the partial sum process which some authors prefer (see \cite{T1},
\cite{T2}, \cite{W}). This is clear since $\sup_{t\geq0}\left\Vert
\overline{\mathsf{S}}_{t}^{[n]}-\mathsf{S}_{t}^{[n]}\right\Vert \leq
A_{n}/(nB_{n})\rightarrow0$ as $n\rightarrow\infty$, see
(\ref{Eq_hjjhjhjhjhjhjjjjjjjjjjjjjjjjjjjjjjjjjjjjjjjjjjj2}) in Proposition
\ref{Prop_GMvsIID}.
\end{remark}%

\vspace{0.2cm}%

To facilitate the application of our FSLT, we now provide explicit versions of
two important special cases in Theorems \ref{T_FSL_GM} and \ref{T_FSL_GMddim}
below. The first deals with scalar observables, while the second gives an easy
sufficient condition for a $d$-dimensional observable to lead to an
independent tuple of scalar L\'{e}vy processes in the limit.\newline\newline%
\noindent
\textbf{Special case: }$\mathcal{J}_{1}$-\textbf{FSLT for real-valued
observables.} In the scalar case we obtain%

\vspace{0.2cm}%

\begin{theorem}
[\textbf{Scalar Functional Stable Limit Theorem for GM-maps}]\label{T_FSL_GM}%
Let $(X,\mathcal{A},\mu,T,\xi)$ be a mixing probability preserving
Gibbs-Markov system, and $f:X\rightarrow\mathbb{R}$ an observable in the
domain of attraction of some $\alpha$-stable random variable with $\alpha
\in(0,2)$, as in (\ref{Eq_DomainAttr}). Assume also that
\begin{equation}
\mu(\vartheta_{f}>t)=O(\mu(\left\vert f\right\vert >t))\text{ \quad as
}t\rightarrow\infty\text{.}\label{Eq_ConditionRegularityTail}%
\end{equation}
Then, for the canonical normalizing sequence $(A_{n},B_{n})$ for
$[\alpha,c_{+},c_{-}]$, the partial sum processes $\mathsf{S}^{[n]}%
=(\mathsf{S}_{t}^{[n]})_{t\geq0}$ given by (\ref{Eq_DefPSprocesses2}) converge
to the real-valued $\alpha$-stable L\'{e}vy process $\mathsf{S}=(\mathsf{S}%
_{t})_{t\geq0}$ with $\mathsf{S}_{1}\overset{d}{=}S$, the canonical limit law
for $[\alpha,c_{+},c_{-}]$ characterized by (\ref{Eq_TheFourierTransformOfG}),
so that
\begin{equation}
\mathsf{S}^{[n]}\overset{\nu}{\Longrightarrow}\mathsf{S}\text{ \quad in
}(\mathcal{D}([0,\infty),\mathbb{R}),\mathcal{J}_{1})\label{Eq_FSLTinThm}%
\end{equation}
for every probability measure $\nu\ll\mu$ on $(X,\mathcal{A})$.
\end{theorem}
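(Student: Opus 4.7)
The plan is to obtain Theorem \ref{T_FSL_GM} as the one-dimensional specialization of the core result Theorem \ref{T_SLTgivesFSLT}, so the entire argument amounts to checking that the hypotheses of the latter are satisfied. The only nontrivial input to be verified is the dynamical SLT (\ref{Eq_BasicAssmThmImpli}); everything else is immediate from the assumptions of Theorem \ref{T_FSL_GM}.

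First I would observe that (\ref{Eq_DomainAttr}) is precisely the classical necessary and sufficient condition for $\mu\circ f^{-1}$ to lie in the domain of attraction of the canonical $\alpha$-stable variable $S$ with Fourier transform (\ref{Eq_TheFourierTransformOfG}). With the canonical normalizing sequence $(A_n,B_n)$ defined via (\ref{Eq_DefBn}), the classical iid SLT for sums of independent copies of $f$ therefore yields exactly (\ref{Eq_dSLT_i}) of Proposition \ref{Prop_GMvsIID}. Next I would check the remaining hypotheses of that proposition: the regular variation of $\mu(|f|>t)$ with index $-\alpha$ together with (\ref{Eq_ConditionRegularityTail}) forces $\mu(\vartheta_f > t) = O(t^{-\alpha}\ell(t))$, whence $\int \vartheta_f^{\eta}\,d\mu < \infty$ for $\eta := \alpha/2 \in (0,1]$ (recall $\alpha \in (0,2)$); and $(B_n)$ being regularly varying of index $1/\alpha > 1/2$ by (\ref{Eq_DefBn}) gives $\sqrt{n} = o(B_n)$. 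Noting that fullness of a one-dimensional $S$ simply means non-degeneracy, ensured by $c_+ + c_- > 0$, Proposition \ref{Prop_GMvsIID} upgrades the iid statement (\ref{Eq_dSLT_i}) to the dynamical SLT $B_n^{-1}(\mathbf{S}_n(f) - A_n) \overset{\mu}{\Longrightarrow} S$, which is (\ref{Eq_BasicAssmThmImpli}) in the scalar case.

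At this stage all hypotheses of Theorem \ref{T_SLTgivesFSLT} with $d=1$ are in place: the integrability $\int |f|^{\eta}\,d\mu < \infty$ follows from the regular variation of $\mu(|f|>t)$ with index $-\alpha$ (for the same $\eta = \alpha/2 < \alpha$), the condition $\mu(\vartheta_f > t) = O(\mu(|f| > t))$ is exactly (\ref{Eq_ConditionRegularityTail}), and the dynamical SLT has just been established. A direct appeal to Theorem \ref{T_SLTgivesFSLT} then delivers the $\mathcal{J}_1$-FSLT (\ref{Eq_FSLTinThm}) with respect to every $\nu \ll \mu$, the limiting L\'{e}vy process being characterised by $\mathsf{S}_1 \overset{d}{=} S$. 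The \emph{main obstacle} here is thus not in the present reduction at all but lies upstream in Theorem \ref{T_SLTgivesFSLT}; the only subtleties worth flagging in the present proof are the choice of $\eta$ when $\alpha$ is small, and the observation that fullness of the limit reduces to non-triviality in dimension one.
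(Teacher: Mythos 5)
Your proposal is correct and follows essentially the same route as the paper: verify the hypotheses of Theorem \ref{T_SLTgivesFSLT} (in particular $\int\vartheta_f^{\eta}\,d\mu<\infty$ for some $\eta\in(0,1\wedge\alpha)$) and obtain the dynamical SLT from Gou\"{e}zel's Theorem 1.5, which the paper cites directly and you access through its restatement as Proposition \ref{Prop_GMvsIID} — a purely cosmetic difference. Your explicit checks (the choice $\eta=\alpha/2$, $\sqrt{n}=o(B_n)$ from regular variation of index $1/\alpha>1/2$, and fullness reducing to non-degeneracy when $d=1$) are all sound.
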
%

\vspace{0.2cm}%

\begin{remark}
\label{Rem_Marta}As mentioned in the introduction, scalar FSLTs for similar
situations have been obtained in \cite{T2}. We briefly outline how our Theorem
\ref{T_FSL_GM} relates to the results of \S 4 in \cite{T2} (in the case of a
mixing probability preserving Gibbs-Markov system $(X,\mathcal{A},\mu,T,\xi)$
and an observable $f:X\rightarrow\mathbb{R}$ in the domain of attraction of
some $\alpha$-stable variable).\newline\textbf{a)} If $f$ is constant on
cylinders, then \cite{T2} shows that it satisfies the $\mathcal{J}_{1}%
$\emph{-}FSLT (\ref{Eq_FSLTinThm}). Theorem \ref{T_FSL_GM} above generalizes
this to all observables which fulfil (\ref{Eq_ConditionRegularityTail}%
).\newline\textbf{b)} Example 1.2 of \cite{T2} shows that
(\ref{Eq_ConditionRegularityTail}) is not necessary for the validity of the
$\mathcal{J}_{1}$\emph{-}FSLT for a \emph{specific} $f$. However, the idea
that the regularity of the particular $f$ used in this example might give a
more general condition sufficient \emph{for all} $f$, which could replace
(\ref{Eq_ConditionRegularityTail}) in Theorem \ref{T_FSL_GM}, is misleading.
Indeed, a different example in \cite{T2} (Example 2.1 there) illustrates the
fact that an observable of the same regularity and tail can fail the
$\mathcal{J}_{1}$\emph{-}FSLT.
\end{remark}%

\noindent
\textbf{Special case: }$\mathcal{J}_{1}$-\textbf{FSLT for tuples with disjoint
tail supports.} Suppose that $f^{(1)},\ldots,f^{(d)}:X\rightarrow\mathbb{R}$
are observables, each attracted to an $\alpha$-stable law, with asymptotically
proportional absolute tails determined on non-overlapping sets, then the
vector-valued observable $f:=(f^{(1)},\ldots,f^{(d)}):X\rightarrow
\mathbb{R}^{d}$ also satisfies a FSLT, the limit being a tuple of independent
$\alpha$-stable L\'{e}vy motions:

\begin{theorem}
[$\mathcal{J}_{1}$\textbf{-convergence to an independent tuple of L\'{e}vy
processes}]\label{T_FSL_GMddim}Let $(X,\mathcal{A},\mu,T,\xi)$ be a mixing
probability preserving Gibbs-Markov system, and let $f^{(1)},\ldots
,f^{(d)}:X\rightarrow\mathbb{R}$ be observables, each satisfying the
assumptions of Theorem \ref{T_FSL_GM}, and such that there are $\alpha
\in(0,2)$, a slowly varying function $\ell$ and, for each $j\in\{1,\ldots,d\}
$, constants $c_{+}^{(j)},c_{-}^{(j)}\geq0$ with $c_{+}^{(j)}+c_{-}^{(j)}>0$
such that, as $t\rightarrow\infty$,%
\begin{equation}
\mu(f^{(j)}>t)\sim c_{+}^{(j)}t^{-\alpha}\ell(t)\text{\quad and}\quad
\mu(f^{(j)}<-t)\sim c_{-}^{(j)}t^{-\alpha}\ell(t)\text{.}%
\label{Eq_AssmIndividTails}%
\end{equation}
Suppose in addition that there is some $M>0$ such that
\begin{equation}
\{\mid f^{(i)}\mid>M\}\cap\{\mid f^{(j)}\mid>M\}=\varnothing\text{ \quad
whenever }i\neq j\text{.}\label{Eq_DisjointTailSupports}%
\end{equation}
Then the partial sum processes $\mathsf{S}^{[n]}=(\mathsf{S}_{t}^{[n]}%
)_{t\geq0}$ of $f:=(f^{(1)},\ldots,f^{(d)}):X\rightarrow\mathbb{R}^{d}$ given
by (\ref{Eq_DefPSprocesses2}) converge to a full $d$-dimensional $\alpha
$-stable L\'{e}vy process $\mathsf{S}=(\mathsf{S}_{t})_{t\geq0}$, so that
\begin{equation}
\mathsf{S}^{[n]}\overset{\nu}{\Longrightarrow}\mathsf{S}\text{ \quad in
}(\mathcal{D}([0,\infty),\mathbb{R}^{d}),\mathcal{J}_{1})
\end{equation}
for every probability measure $\nu\ll\mu$ on $(X,\mathcal{A})$.

Here, $(A_{n}^{(j)},B_{n}^{(j)})_{n\geq1}\subseteq\mathbb{R}\times(0,\infty)$
is the canonical normalizing sequence for $[\alpha,c_{+}^{(j)},c_{-}^{(j)}]$,
and we let $S^{(j)}$ be the corresponding $\alpha$-stable variable as in
(\ref{Eq_TheFourierTransformOfG}), while $B_{n}:=B_{n}^{(1)}$ and
$A_{n}:=(A_{n}^{(1)},\ldots,A_{n}^{(d)})$.

The limit process $\mathsf{S}$ is a tuple $(\mathsf{S}^{(1)},\ldots
,\mathsf{S}^{(d)})$ of $d$ independent scalar $\alpha$-stable L\'{e}vy
processes $(\mathsf{S}_{t}^{(j)})_{t\geq0}$ determined by $\mathsf{S}%
_{1}^{(j)}\overset{d}{=}S^{(j)}$.
\end{theorem}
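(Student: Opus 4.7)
The plan is to derive Theorem \ref{T_FSL_GMddim} from Theorem \ref{T_SLTgivesFSLT}. The task is twofold: verify that the vector observable $f=(f^{(1)},\ldots,f^{(d)})$ satisfies the hypotheses of that theorem, and identify the attracting law as the independent tuple $S=(S^{(1)},\ldots,S^{(d)})$.

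The regularity hypotheses reduce to coordinatewise bookkeeping. Subadditivity of $t\mapsto t^{\eta}$ (with $\eta\in(0,1]$) and equivalence of norms on $\mathbb{R}^{d}$ give $\int\|f\|^{\eta}\,d\mu\leq C\sum_{j}\int|f^{(j)}|^{\eta}\,d\mu<\infty$. On each cylinder $Z\in\xi$ one has $D_{Z}(f)\leq\sum_{j}D_{Z}(f^{(j)})$, hence $\vartheta_{f}\leq\sum_{j}\vartheta_{f^{(j)}}$, so \eqref{Eq_ConditionRegularityTail} applied to each $f^{(j)}$ combined with \eqref{Eq_AssmIndividTails} yields $\mu(\vartheta_{f}>t)=O(\ell(t)t^{-\alpha})$. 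By \eqref{Eq_DisjointTailSupports}, for $t>M$ the events $\{|f^{(j)}|>t\}$ are pairwise disjoint, whence $\mu(\|f\|_{\infty}>t)=\sum_{j}\mu(|f^{(j)}|>t)\sim\bigl(\sum_{j}(c_{+}^{(j)}+c_{-}^{(j)})\bigr)\ell(t)t^{-\alpha}$, and norm equivalence then gives $\mu(\vartheta_{f}>t)=O(\mu(\|f\|>t))$.

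For the SLT itself, Proposition \ref{Prop_GMvsIID} lets me work in the iid setting: take $(Z_{k})_{k\geq0}$ iid with $Z_{0}\overset{d}{=}f$ and aim for $(1/B_{n})\bigl(\sum_{k<n}Z_{k}-A_{n}\bigr)\Longrightarrow S$. By the Cram\'er--Wold device it suffices, for each $a\in\mathbb{R}^{d}$, to establish the scalar SLT for $g_{a}:=\langle a,f\rangle$ with normalizer $B_{n}$, centering $\langle a,A_{n}\rangle$ and limit $\langle a,S\rangle$. The disjoint-support assumption makes the tails of $g_{a}$ transparent: on $\{|f^{(j)}|>M\}$ all other $|f^{(i)}|\leq M$, so $g_{a}=a^{(j)}f^{(j)}+r_{j}$ with $|r_{j}|\leq M\|a\|_{1}$, while $|g_{a}|\leq M\|a\|_{1}$ when $\max_{j}|f^{(j)}|\leq M$. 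A standard squeeze then gives, as $t\to\infty$,
\[
\mu(g_{a}>t)\sim\tilde{c}_{+}^{a}\,t^{-\alpha}\ell(t),\qquad \mu(g_{a}<-t)\sim\tilde{c}_{-}^{a}\,t^{-\alpha}\ell(t),
\]
with $\tilde{c}_{+}^{a}=\sum_{a^{(j)}>0}c_{+}^{(j)}(a^{(j)})^{\alpha}+\sum_{a^{(j)}<0}c_{-}^{(j)}|a^{(j)}|^{\alpha}$, and analogously for $\tilde{c}_{-}^{a}$. Because the canonical normalizer from \eqref{Eq_DefBn} depends only on $\alpha$ and $\ell$, the same $B_{n}$ drives every $g_{a}$; and since the $S^{(j)}$ are independent scalar $\alpha$-stable, a routine convolution of characteristic functions shows that $\langle a,S\rangle$ is precisely the canonical scalar $\alpha$-stable law of type $[\alpha,\tilde{c}_{+}^{a},\tilde{c}_{-}^{a}]$. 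The classical scalar SLT then delivers the desired projection, once one verifies, separately in the three regimes ($\alpha<1$, $\alpha=1$, $\alpha>1$), that the canonical scalar centering $A_{n}^{g_{a}}$ equals $\langle a,A_{n}\rangle$ up to $o(B_{n})$; for $\alpha=1$ this uses the identity $\tilde{c}_{+}^{a}-\tilde{c}_{-}^{a}=\sum_{j}a^{(j)}(c_{+}^{(j)}-c_{-}^{(j)})$ to reconcile the logarithmic correction.

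Cram\'er--Wold now yields joint convergence to $S$. Since the $S^{(j)}$ are nondegenerate and independent, $S$ is full, so Proposition \ref{Prop_GMvsIID} transfers the iid SLT back to the Gibbs--Markov system, and Theorem \ref{T_SLTgivesFSLT} upgrades this SLT to the $\mathcal{J}_{1}$-FSLT. The main technical nuisance I anticipate is precisely the centering bookkeeping in the $\alpha=1$ regime, where skewness enters through a logarithmic term; the remainder of the argument is a direct reduction.
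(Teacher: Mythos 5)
Your reduction to Theorem \ref{T_SLTgivesFSLT} is the right frame, and your preliminary bookkeeping (subadditivity of $t\mapsto t^{\eta}$, $\vartheta_{f}\leq\sum_{j}\vartheta_{f^{(j)}}$, and the disjointness \eqref{Eq_DisjointTailSupports} giving $\mu(\Vert f\Vert>t)\asymp t^{-\alpha}\ell(t)$) correctly verifies the hypotheses of that theorem --- a step the paper leaves implicit. Where you genuinely diverge from the paper is in identifying the limit law. You run Cram\'er--Wold in the iid model supplied by Proposition \ref{Prop_GMvsIID}: you compute the one-dimensional tails of every linear functional $\langle a,f\rangle$ via the squeeze on $\{\vert f^{(j)}\vert>M\}$, match them to the characteristic function of $\langle a,S\rangle$, and invoke the classical scalar SLT. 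The paper instead never touches linear functionals: it proves $\mu\circ f^{-1}\in\mathrm{DOA}(S)$ directly through the multivariate domain-of-attraction criterion of Rvaceva (regular variation of $\Vert f\Vert$ plus convergence of the angular part $f/\Vert f\Vert$ to the spectral measure $\Lambda_{(c_{+}^{(j)},c_{-}^{(j)})}$ of \eqref{Eq_IndepSpectralMeasure}), and then pins down the specific normalizing sequence by combining tightness and fullness of subsequential limits with the $d$-dimensional convergence-of-types theorem and the componentwise convergence from Theorem \ref{T_FSL_GM}. The trade-off is clear: your route is more elementary (no spectral measures, no multivariate types theorem) but forces you to verify that the canonical scalar centering for $\langle a,f\rangle$ agrees with $\langle a,A_{n}\rangle$ up to $o(B_{n})$ modulo the shift built into $\langle a,S\rangle$; the paper's route is shift-free by design (the spectral-measure criterion ignores centerings, and convergence of types absorbs them), which is precisely why it evades the $\alpha=1$ bookkeeping you flag.

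That flagged step is the one soft spot: for $\alpha=1$ the truncated-mean centerings are not linear in $f$, and the logarithmic drift $\sum_{j}a^{(j)}(c_{+}^{(j)}-c_{-}^{(j)})\log\vert a^{(j)}\vert$ produced by rescaling each $S^{(j)}$ must be matched against the discrepancy $(A_{n}^{g_{a}}-\langle a,A_{n}\rangle)/B_{n}$. This does work out, but it is a genuine computation, not a one-line identity, and your proposal only asserts it. Either carry it out, or note that one may bypass it entirely by following the paper: prove $\mu\circ f^{-1}\in\mathrm{DOA}(S)$ (your tail computation for $\Vert f\Vert$ and $f/\Vert f\Vert$ already contains everything needed) and let convergence of types together with the marginal convergence \eqref{Eq_IndividComponent} fix the normalization.
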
%

\vspace{0.2cm}%

\section{Preparations and Convergence of Marginals}%

\noindent
\textbf{Regularity control.} We next record that due to bounded distortion
(\ref{Eq_BddDistortion}) and the big image property, $a:=\inf_{Z\in\xi}%
\mu(TZ)>0$, a Gibbs-Markov system satisfies
\begin{equation}
\mu(Z\cap T^{-n}E)\leq\frac{e^{R}}{a}\,\mu(Z)\,\mu(E)\text{ \quad}%
\begin{array}
[c]{l}%
\text{whenever }n\geq1\text{, }\\
Z\in\xi_{n}\text{, and }E\in\mathcal{A}\text{.}%
\end{array}
\label{Eq_UpperBoundDistortion}%
\end{equation}
This enables good control of conditional probabilities on cylinders. Let
$(\mathfrak{F},\left\Vert \centerdot\right\Vert )$ be a normed space, and
$f:X\rightarrow\mathfrak{F}$. For our argument it will be crucial to keep
track of the oscillations of ergodic sums on cylinders. We do so using the
functions
\begin{equation}
\vartheta_{f,n}:X\rightarrow\lbrack0,\infty]\text{, \quad}\vartheta
_{f,n}:=\sum_{k=0}^{n-1}\,\theta^{n-k}(\vartheta_{f}\circ T^{k})\text{ \quad
for }n\geq1\text{.}%
\end{equation}
The value which the $\xi_{n}$-measurable function $\vartheta_{f,n}$ takes on
some rank-$n$ cylinder $Z\in\xi_{n}$ will be denoted $\vartheta_{f,n}(Z)$.
This controls the oscillations of $S_{n}(f)$:

\begin{lemma}
[\textbf{Oscillation of ergodic sums on cylinders}]\label{L_OsciOnCyls}Let
$(\mathfrak{F},\left\Vert \centerdot\right\Vert )$ be a normed space,
$(X,\mathcal{A},\mu,T,\xi)$ a probability preserving Gibbs-Markov system, and
$f:X\rightarrow\mathfrak{F}$ an observable with $\vartheta_{f}<\infty$ a.e. on
$X$. Then, for any $n\geq1$ and $Z\in\xi_{n}$, we have
\begin{equation}
\sup\nolimits_{x,y\in Z}\left\Vert \mathbf{S}_{n}(f)(x)-\mathbf{S}%
_{n}(f)(y)\right\Vert \leq\vartheta_{f,n}(Z)\text{.}%
\end{equation}

\end{lemma}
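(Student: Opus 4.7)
\textbf{Proof plan for Lemma \ref{L_OsciOnCyls}.}

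The plan is to control each summand in $\mathbf{S}_n(f)(x)-\mathbf{S}_n(f)(y)$ individually using the cylinderwise Lipschitz constants, keeping track of how the dynamical distance $d_\theta$ shrinks backward in time. Fix $n\geq 1$ and $Z\in\xi_n$. Since $\xi_n=\bigvee_{k=0}^{n-1}T^{-k}\xi$, I can write $Z=\bigcap_{k=0}^{n-1}T^{-k}Z_k$ for uniquely determined cylinders $Z_0,\ldots,Z_{n-1}\in\xi$. Pick any $x,y\in Z$; then $T^k x,T^k y\in Z_k$ for each $k\in\{0,\ldots,n-1\}$, and moreover $T^k x$ and $T^k y$ lie in the common rank-$(n-k)$ cylinder $\bigcap_{j=0}^{n-k-1}T^{-j}Z_{k+j}\in\xi_{n-k}$. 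Hence their separation time satisfies $s(T^k x,T^k y)\geq n-k$, so by definition of the dynamical metric,
\begin{equation*}
d_\theta(T^k x,T^k y)=\theta^{s(T^k x,T^k y)}\leq\theta^{n-k}.
\end{equation*}

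Next I apply the Lipschitz bound on the rank-one cylinder $Z_k$. Since $T^k x,T^k y\in Z_k$, the definition of $D_{Z_k}(f)$ gives
\begin{equation*}
\|f(T^k x)-f(T^k y)\|\leq D_{Z_k}(f)\,d_\theta(T^k x,T^k y)\leq D_{Z_k}(f)\,\theta^{n-k}.
\end{equation*}
But $\vartheta_f=\sum_{W\in\xi}D_W(f)\,1_W$, and $T^k x\in Z_k$, so $D_{Z_k}(f)=\vartheta_f(T^k x)$. Therefore
\begin{equation*}
\|f(T^k x)-f(T^k y)\|\leq\theta^{n-k}\,\vartheta_f(T^k x).
\end{equation*}

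Summing over $k=0,\ldots,n-1$ and using the triangle inequality,
\begin{equation*}
\|\mathbf{S}_n(f)(x)-\mathbf{S}_n(f)(y)\|\leq\sum_{k=0}^{n-1}\theta^{n-k}\,\vartheta_f(T^k x)=\vartheta_{f,n}(x).
\end{equation*}
Since $\vartheta_{f,n}$ is $\xi_n$-measurable, it is constant on $Z$ with common value $\vartheta_{f,n}(Z)$, and taking the supremum over $x,y\in Z$ finishes the proof. There is no serious obstacle here: the only mild point is to convert the uniform expansion $d_\theta(x,y)=\theta\,d_\theta(Tx,Ty)$ (stated only a.e.) into the separation-time inequality $s(T^k x,T^k y)\geq n-k$, which holds for \emph{every} pair $x,y\in Z\in\xi_n$ and requires no exceptional set, so the bound holds pointwise on $Z$.
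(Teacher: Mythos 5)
Your proof is correct and follows essentially the same route as the paper: termwise application of the cylinderwise Lipschitz bound plus the geometric decay of $d_{\theta}$ along the orbit, summed via the triangle inequality. The only (harmless) difference is that you derive $d_{\theta}(T^{k}x,T^{k}y)\leq\theta^{n-k}$ directly from the separation time, whereas the paper invokes the uniform expansion identity together with $\mathrm{diam}(X)\leq1$.
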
%

\vspace{0.1cm}%

\begin{proof}
If $x,y\in Z\in\xi_{n}$ and $k\in\{0,\ldots,n-1\}$, then $\xi(T^{k}%
x)=\xi(T^{k}y)$. Therefore,
\begin{align*}
\left\Vert \mathbf{S}_{n}(f)(x)-\mathbf{S}_{n}(f)(y)\right\Vert  & \leq
\sum_{k=0}^{n-1}\left\Vert f(T^{k}x)-f(T^{k}y)\right\Vert \leq\sum_{k=0}%
^{n-1}D_{\xi(T^{k}x)}(f)\,d_{\theta}(T^{k}x,T^{k}y)\\
& =\sum_{k=0}^{n-1}(\vartheta_{f}\circ T^{k})(x)\,\theta^{n-k}d_{\theta}%
(T^{n}x,T^{n}y)\leq\vartheta_{f,n}(x)\text{,}%
\end{align*}
since $\mathrm{diam}(X)=1$.
\end{proof}%

\vspace{0.2cm}%

This easy estimate will be exploited through the following general observation.

\begin{lemma}
[\textbf{Tails of exponentially weighted ergodic sums}]%
\label{L_TailControlForConvolutionSums}Let $(\mathfrak{F},\left\Vert
\centerdot\right\Vert )$ be a normed space, $T$ a measure-preserving map on
the probability space $(X,\mathcal{A},\mu)$, assume that $g:X\rightarrow
\mathfrak{F}$ is a measurable function, and $\rho\in(0,1)$. Let
\begin{equation}
G_{n}:=\sum_{k=0}^{n-1}\rho^{n-k}\,(g\circ T^{k})\text{ \quad for }%
n\geq1\text{.}%
\end{equation}
If the tail of $\left\Vert g\right\Vert $ satisfies $\mu(\left\Vert
g\right\Vert >t)=O(\tau(t))$ as $t\rightarrow\infty$ for some regularly
varying function $\tau$ of order $-\alpha$ (some $\alpha>0$), then there are
constants $\zeta,s_{\ast}>0$ such that
\begin{equation}
\mu(\left\Vert G_{n}\right\Vert >s)\leq\zeta\,\tau(s)\text{ \quad for }%
n\geq1\text{ and }s\geq s_{\ast}\text{.}\label{Eq_uhgnbuhtnuisjzgfc}%
\end{equation}

\end{lemma}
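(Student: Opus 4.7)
The approach is to dominate $\|G_n\|$ by a positive scalar sum, then apply a union bound with \emph{geometrically scaled} thresholds matched to the weights $\rho^{n-k}$, and finally invoke Potter's bounds on $\tau$ to collapse the resulting sum into a convergent geometric series whose bound is independent of $n$.

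First, by the triangle inequality $\|G_n\| \leq \sum_{k=0}^{n-1} \rho^{n-k} Y_k$, where $Y_k := \|g\|\circ T^k$. Since $T$ preserves $\mu$, each $Y_k$ has the same distribution as $\|g\|$, so there are $C_0, u_0 > 0$ with $\mu(Y_k > u) \leq C_0\, \tau(u)$ for every $k \geq 0$ and every $u \geq u_0$.

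Second, I allocate the mass $s$ across the $n$ summands geometrically: fix $\beta \in (0,1)$ (to be tuned later) and set
\[
t_k := s\,(1-\rho^{\beta})\,\rho^{\beta(n-k-1)}, \qquad k = 0,\ldots,n-1.
\]
A direct computation gives $\sum_{k=0}^{n-1} t_k = s(1-\rho^{n\beta}) \leq s$, so the event $\{\|G_n\| > s\}$ is contained in $\bigcup_k \{\rho^{n-k} Y_k > t_k\}$ and the union bound yields
\[
\mu(\|G_n\| > s) \;\leq\; \sum_{k=0}^{n-1} \mu\bigl(Y_k > s_k\bigr), \qquad s_k := t_k\,\rho^{-(n-k)} = \tfrac{s(1-\rho^{\beta})}{\rho}\,\rho^{-(n-k-1)(1-\beta)}.
\]
Note that $s_k \geq s(1-\rho^{\beta})/\rho$ uniformly in $k$, so for $s \geq s_{\ast}$ with $s_{\ast}$ sufficiently large (depending on $u_0$, $\rho$, $\beta$), every $s_k$ lies beyond both $u_0$ and any chosen Potter threshold.

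Third, I apply regular variation. Fix $\epsilon \in (0,\alpha)$. Potter's theorem gives, for $s \geq s_{\ast}$ and all $k$,
\[
\tau(s_k) \;\leq\; C_1\,\rho^{(\alpha-\epsilon)(n-k-1)(1-\beta)}\,\tau\!\left(\tfrac{s(1-\rho^{\beta})}{\rho}\right) \;\leq\; C_2\,\rho^{(\alpha-\epsilon)(n-k-1)(1-\beta)}\,\tau(s),
\]
where the second inequality uses regular variation at the fixed scale $(1-\rho^{\beta})/\rho$. Summing the geometric series in $j = n-k-1$, which converges since $\rho^{(\alpha-\epsilon)(1-\beta)} < 1$, gives $\sum_{k=0}^{n-1} \tau(s_k) \leq C_3\,\tau(s)$ with $C_3$ independent of $n$, hence $\mu(\|G_n\| > s) \leq \zeta\,\tau(s)$ for $s \geq s_{\ast}$, which is the claim.

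The main obstacle is the choice of thresholds in the second step: the naive uniform allocation $t_k \equiv s/n$ loses the benefit of the exponential weighting and introduces a factor $n$ that destroys uniformity, while a non-summable allocation violates $\sum_k t_k \leq s$. The geometric allocation matched to the weights $\rho^{n-k}$ is the sweet spot, producing effective arguments $s_k$ that blow up geometrically away from $k = n-1$, so that regular variation of $\tau$ reduces the union bound to a convergent geometric series controlled by $\tau(s)$ alone. A minor technical point is that Potter's bound produces the exponent $\alpha - \epsilon$ rather than $\alpha$, but any $\epsilon \in (0,\alpha)$ suffices and is absorbed into $\zeta$.
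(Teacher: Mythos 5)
Your proof is correct and follows essentially the same route as the paper's: both allocate the threshold $s$ geometrically across the summands so that the effective argument of the tail of $\|g\|$ grows like $(q/\rho)^{j}s$ (your choice $q=\rho^{\beta}$ is exactly the paper's parameter $q\in(\rho,1)$), and both then invoke Potter's bounds to sum a convergent geometric series. The only cosmetic difference is that the paper reaches the same union bound by iterating a two-term splitting $\mu(A+B>t)\leq\mu(A>(1-q)t)+\mu(B>qt)$ rather than writing the allocation down in one step.
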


\begin{proof}
\textbf{(i)} Write $\tau(t)=t^{-\alpha}\ell(t)$, $t>0$, with $\ell$ slowly
varying. Fix some $q\in(\rho,1)$. According to Potter's Theorem for slowly
varying functions (Theorem 1.5.6 of \cite{BGT}), there is some $s_{0}>0$ such
that $\ell(t)/\ell(s)\leq2\max(\left(  t/s\right)  ^{\alpha/2},\left(
t/s\right)  ^{-\alpha/2})$ for $s,t\geq s_{0}$. Therefore there is some
$\kappa>0$ for which
\[
\frac{\ell((1-q)(q/\rho)^{j}s)}{\ell(s)}\leq\kappa\left(  \frac{q}{\rho
}\right)  ^{j\alpha/2}\text{ \quad whenever }j\geq1\text{ and }s\geq
s_{0}\text{.}%
\]
As a consequence,
\begin{equation}%
{\displaystyle\sum\limits_{j=1}^{n}}
\left(  \frac{\rho}{q}\right)  ^{j\alpha}\frac{\ell((1-q)(q/\rho)^{j}s)}%
{\ell(s)}\leq\frac{\kappa}{1-(\rho/q)^{\alpha/2}}\text{ \quad for }s\geq
s_{0}\text{.}\label{Eq_tevcbnxcnbxcnbxcnbcnb}%
\end{equation}
\textbf{(ii)} Now observe, using $T$-invariance of $\mu$, that for $1\leq m<n$
and $t>0$,
\[
\mu\left(
{\displaystyle\sum\limits_{k=0}^{m}}
\rho^{n-k}(\left\Vert g\right\Vert \circ T^{k})>t\right)  \leq\mu(\rho
^{n-m}\left\Vert g\right\Vert >(1-q)t)+\mu\left(
{\displaystyle\sum\limits_{k=0}^{m-1}}
\rho^{n-k}(\left\Vert g\right\Vert \circ T^{k})>qt\right)  \text{.}%
\]
Iterating this we obtain, for every $n\geq1$ and $s>0$,
\begin{align*}
\mu(\left\Vert G_{n}\right\Vert >s)\leq & \mu\left(
{\displaystyle\sum\limits_{k=0}^{n-1}}
\rho^{n-k}(\left\Vert g\right\Vert \circ T^{k})>s\right) \\
\leq & \mu(\rho\left\Vert g\right\Vert >(1-q)s)+\mu\left(
{\displaystyle\sum\limits_{k=0}^{n-2}}
\rho^{n-k}(\left\Vert g\right\Vert \circ T^{k})>qs\right) \\
\vdots & \\
\leq &
{\displaystyle\sum\limits_{j=0}^{n-2}}
\mu(\rho^{j+1}\left\Vert g\right\Vert >(1-q)q^{j}s)+\mu(\rho^{n}\left\Vert
g\right\Vert >q^{n-1}s)\\
\leq &
{\displaystyle\sum\limits_{j=0}^{n-1}}
\mu(\left\Vert g\right\Vert >(1-q)(q/\rho)^{j}s)\text{.}%
\end{align*}
(In the last step we can drop the surplus $\rho$ since $\rho<1$.) With $c>0 $
a constant such that $\mu(\left\Vert g\right\Vert >t)\leq c\tau(t)$ for $t\geq
t_{0}$, we then obtain, recalling (\ref{Eq_tevcbnxcnbxcnbxcnbcnb}),
\begin{align*}
\mu(\left\Vert G_{n}\right\Vert >s)\leq & \tau(s)\,c%
{\displaystyle\sum\limits_{j=1}^{n}}
\frac{\tau((1-q)(q/\rho)^{j}s)}{\tau(s)}\\
=  & \tau(s)\,c(1-q)^{-\alpha}%
{\displaystyle\sum\limits_{j=1}^{n}}
\left(  \frac{\rho}{q}\right)  ^{j\alpha}\frac{\ell((1-q)(q/\rho)^{j}s)}%
{\ell(s)}\\
\leq & \tau(s)\,\frac{c\kappa(1-q)^{-\alpha}}{1-(\rho/q)^{\alpha/2}}\text{
\quad for }s\geq\max\left(  s_{0},\frac{\rho\,t_{0}}{(1-q)q}\right)  \text{,}%
\end{align*}
which establishes our claim (\ref{Eq_uhgnbuhtnuisjzgfc}).
\end{proof}%

\vspace{0.2cm}%

This leads to

\begin{lemma}
[\textbf{Uniform tail estimate for the} $\vartheta_{f,k}$]\label{L_ThetaFTail}%
Under the assumptions of Theorem \ref{T_SLTgivesFSLT}, there is some constant
$\zeta_{f}>0$ such that
\begin{equation}
\underset{n\rightarrow\infty}{\overline{\lim}}\,\,n\,\sup_{k\geq1}\mu\left(
\frac{\vartheta_{f,k}}{B_{n}}>\varepsilon\right)  \leq\zeta_{f}\,\varepsilon
^{-\alpha}\text{\quad for }\varepsilon>0\text{.}%
\label{Eq_sacdsadsadsadsadsafdsafsa}%
\end{equation}

\end{lemma}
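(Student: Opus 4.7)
The plan is to recognize that $\vartheta_{f,k}$ is precisely a sum of the form $G_k$ in Lemma \ref{L_TailControlForConvolutionSums} with $g := \vartheta_f$ and $\rho := \theta$. Once I produce a regularly varying majorant of index $-\alpha$ for the tail of $\vartheta_f$, that lemma supplies a uniform-in-$k$ tail bound, and substituting $s = \varepsilon B_n$ together with the regular-variation information in Proposition \ref{Prop_GMvsIID} completes the estimate.

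First I would produce the majorant. By Proposition \ref{Prop_GMvsIID}, each $\tau_{f^{(i)}}(t) := \mu(|f^{(i)}| > t)$ is regularly varying of index $-\alpha$ with $n\tau_{f^{(i)}}(B_n) \to c^{(i)} \in (0,\infty)$. Comparing the chosen norm with the coordinate maxima, $\max_i |f^{(i)}| \leq \|f\| \leq C\max_i |f^{(i)}|$ for a suitable $C>0$, hence
\[
\mu(\|f\| > t) \leq \sum_{i=1}^{d}\tau_{f^{(i)}}(t/C),
\]
which is itself regularly varying of index $-\alpha$. Combined with the hypothesis $\mu(\vartheta_f > t) = O(\mu(\|f\|>t))$, this yields a regularly varying function $\tau$ of index $-\alpha$ and constants $c,t_0>0$ with $\mu(\vartheta_f > t) \leq c\tau(t)$ for $t \geq t_0$; the uniform convergence theorem for regularly varying functions then forces $\limsup_n n\tau(B_n) < \infty$.

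Next, applying Lemma \ref{L_TailControlForConvolutionSums} with this $\tau$ and $g = \vartheta_f$, $\rho = \theta$ produces constants $\zeta, s_* > 0$ such that $\mu(\vartheta_{f,k} > s) \leq \zeta\tau(s)$ uniformly in $k\geq 1$ for $s\geq s_*$. Substituting $s = \varepsilon B_n$ (eventually $\varepsilon B_n \geq s_*$ since $B_n\to\infty$) and invoking regular variation of index $-\alpha$, $\tau(\varepsilon B_n) \sim \varepsilon^{-\alpha}\tau(B_n)$, I obtain
\[
\limsup_{n\to\infty} n \sup_{k\geq 1} \mu\!\left(\frac{\vartheta_{f,k}}{B_n} > \varepsilon\right) \leq \zeta \varepsilon^{-\alpha} \limsup_{n\to\infty} n\tau(B_n),
\]
so the claim holds with $\zeta_f := \zeta\cdot \limsup_n n\tau(B_n)$.

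The only bookkeeping that needs care is verifying that the slowly varying part of the majorant $\tau$ is indeed the one shared (up to constants) by the $\tau_{f^{(i)}}$, a routine application of the uniform convergence theorem to the rescaling $t \mapsto t/C$. Otherwise the proof is a direct invocation of Lemma \ref{L_TailControlForConvolutionSums}, which has already absorbed the real technical work of bounding exponentially weighted ergodic-sum tails.
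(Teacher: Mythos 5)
Your proposal is correct and follows essentially the same route as the paper: both arguments reduce the claim to Lemma \ref{L_TailControlForConvolutionSums} applied with $\rho=\theta$, feed in the regular variation of the coordinate tails $\tau_{f^{(i)}}$ and the normalization $n\,\tau_{f^{(i)}}(B_n)\to c^{(i)}$ from Proposition \ref{Prop_GMvsIID}, and then substitute $s=\varepsilon B_n$. The only (inessential) difference is that you apply the lemma once to $\vartheta_f$ with the combined majorant $\sum_i \tau_{f^{(i)}}(\cdot/C)$, whereas the paper applies it coordinatewise to each $\vartheta_{f^{(i)}}$ with majorant $\tau_{f^{(i)}}$ and then uses $\vartheta_{f,k}\leq\sum_i\vartheta_{f^{(i)},k}$.
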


\begin{proof}
Take any $i\in\{1,\ldots,d\},$ and consider the tail $\tau_{f^{(i)}}$ of
$\left\vert f^{(i)}\right\vert $. Then regular variation of $(B_{n})$ and
(\ref{Eq_shvfbjsfdbvjhdbbbbbbyyyyyyyyyyy2}) of Proposition \ref{Prop_GMvsIID}
show that for any $\varepsilon>0$ there is some $n_{0}^{(i)}(\varepsilon)$
such that
\begin{equation}
\tau_{f^{(i)}}\left(  \frac{\varepsilon}{d}\,B_{n}\right)  \leq2c^{(i)}%
d^{\alpha}\,\frac{\varepsilon^{-\alpha}}{n}\text{ \quad for }n\geq n_{0}%
^{(i)}(\varepsilon)\text{.}\label{Eq_vbvbvbvbvbvbbbbbbbbbb}%
\end{equation}
\newline Our regularity assumption allows us to apply Lemma
\ref{L_TailControlForConvolutionSums} to $g:=\vartheta_{f^{(i)}}$ and
$\tau:=\tau_{f^{(i)}}$ to obtain $\zeta^{(i)},s_{\ast}^{(i)}>0$ such that for
all $\varepsilon>0$ we have $\mu\left(  \vartheta_{f^{(i)},k}/B_{n}%
>\varepsilon\right)  \leq\zeta^{(i)}\,\tau_{f^{(i)}}\left(  \varepsilon
\,B_{n}\right)  $ whenever $k\geq1$ and $n\geq n_{1}^{(i)}(\varepsilon)$ (so
large that $\varepsilon\,B_{n}\geq s_{\ast}^{(i)}$). Now $D_{Z}(f)\leq
\sum_{i=1}^{d}D_{Z}(f^{(i)})$, and hence $\vartheta_{f,k}\leq\sum_{i=1}%
^{d}\vartheta_{f^{(i)},k}$, so that for any $\varepsilon>0$,
\[
\mu\left(  \frac{\vartheta_{f,k}}{B_{n}}>\varepsilon\right)  \leq\sum
_{i=1}^{d}\mu\left(  \frac{\vartheta_{f^{(i)},k}}{B_{n}}>\frac{\varepsilon}%
{d}\right)  \leq\sum_{i=1}^{d}\zeta^{(i)}\,\tau_{f^{(i)}}\left(
\frac{\varepsilon}{d}\,B_{n}\right)  \text{ \quad}%
\begin{array}
[c]{c}%
\text{for }k\geq1\text{ and }\\
n\geq\max\limits_{1\leq i\leq d}n_{1}^{(i)}(\varepsilon)\text{.}%
\end{array}
\]
Combined with (\ref{Eq_vbvbvbvbvbvbbbbbbbbbb}) this gives
(\ref{Eq_sacdsadsadsadsadsafdsafsa}) with $\zeta_{f}:=2d^{\alpha}\sum
_{i=1}^{d}\zeta^{(i)}c^{(i)}$.
\end{proof}%

\vspace{0.2cm}%
%

\noindent
\textbf{Uniform control under conditional measures.} By standard arguments,
the good distortion properties of $T$ can also be expressed in terms of the
\emph{transfer operator} $\widehat{T}:L_{1}(\mu)\rightarrow L_{1}(\mu)$ of
$T$, which is characterized by $\int f\circ T\cdot u\,d\mu=\int f\cdot
\widehat{T}u\,d\mu$ for $f\in L_{\infty}(\mu)$ and $u\in L_{1}(\mu)$. We are
going to use this via

\begin{lemma}
[\textbf{An }$L_{1}$\textbf{-compact invariant set for }$\widehat{T}$%
]\label{L_CptEmbedding}Let $(X,\mathcal{A},\mu,T,\xi)$ be a probability
preserving Gibbs-Markov map. There is some strongly compact convex set
$\mathfrak{H}\subseteq L_{1}(\mu)$ such that $\widehat{T}\mathfrak{H}%
\subseteq\mathfrak{H}$, while for every $n\geq1$ and $W\in\xi_{n}$, the
normalized density $\mu(W)^{-1}\widehat{T}^{n}1_{W}$ belongs to $\mathfrak{H}$.
\end{lemma}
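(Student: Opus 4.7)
The plan is to construct $\mathfrak{H}$ as the closed convex hull in $L_1(\mu)$ of the family $\mathfrak{H}_0:=\{h_W:n\ge 1,\ W\in\xi_n\}$, with $h_W:=\mu(W)^{-1}\widehat{T}^n 1_W$. Three properties need verification: uniform regularity of the elements of $\mathfrak{H}_0$, $\widehat{T}$-invariance of $\mathfrak{H}$, and strong $L_1$-compactness.

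For regularity, note $\widehat{T}^n 1_W=v_W'\cdot 1_{T^n W}$. The Markov property together with the big image condition forces $T^n W$ to be a union of $\xi$-cylinders with $\mu(T^n W)\ge\flat$, and bounded distortion (\ref{Eq_BddDistortion}) yields $h_W\in[e^{-R},\,e^R/\flat]$ on its support, uniformly in $(n,W)$. Telescoping the chain rule $v_W=v_{Z_0}\circ\cdots\circ v_{Z_{n-1}}$, invoking the Gibbs-Markov hypothesis $M:=\sup_{Z\in\xi}D_{TZ}(\log v_Z')<\infty$ and the contraction identity $d_\theta(v_Z x,v_Z y)=\theta\,d_\theta(x,y)$, then produces the uniform Lipschitz bound $D_{T^nW}(\log h_W)\le M/(1-\theta)$. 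Invariance follows from the identity
\begin{equation*}
\widehat{T} h_W=\sum_{W'\in\xi_{n+1},\,W'\subseteq W}\frac{\mu(W')}{\mu(W)}\,h_{W'}
\end{equation*}
(obtained by expanding $1_W=\sum_{W'\subseteq W}1_{W'}$ over $\xi_{n+1}$), which exhibits $\widehat{T} h_W$ as a convex combination of generators of $\mathfrak{H}$; linearity and $L_1$-continuity of $\widehat{T}$ then promote this to $\widehat{T}\mathfrak{H}\subseteq\mathfrak{H}$.

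By Mazur's theorem (the closed convex hull of a relatively compact subset of a Banach space is compact), it suffices to show $\mathfrak{H}_0$ is relatively compact in $L_1(\mu)$. Enumerate $\xi=\{Z_j\}_{j\ge 1}$; since each support $T^n W$ is $\sigma(\xi)$-measurable, every $h_W$ either vanishes identically on a given $Z_j$ or is strictly positive throughout $Z_j$. Given a sequence $\{h_{W_m}\}$ in $\mathfrak{H}_0$, a diagonal extraction furnishes a subsequence in which this dichotomy stabilizes on every $Z_j$ and for which $h_{W_m}(y_{j,k})$ converges at every member of a countable $d_\theta$-dense set $\{y_{j,k}\}\subseteq Z_j$ (built by picking one representative per non-empty rank-$N$ sub-cylinder of $Z_j$, for every $N$). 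The uniform Lipschitz bound from the regularity step propagates convergence from $\{y_{j,k}\}$ to all of $Z_j$, and dominated convergence (with constant $e^R/\flat$) upgrades this to $L_1(\mu|_{Z_j})$-convergence. Uniform tightness over the tails $\bigcup_{j>N}Z_j$ is automatic from $\mu(X)=1$ and the uniform $L_\infty$-bound.

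The main obstacle is precisely this last passage: the space $(Z_j,d_\theta)$ is in general neither compact nor even totally bounded when $\xi$ is infinite, so classical Arzel\`a-Ascoli does not apply. The elementary fix is that equi-Lipschitz functions converge pointwise on all of $Z_j$ as soon as they converge on a dense subset (by sandwiching via the Lipschitz bound), so the uniform $L_\infty$-bound plus equi-Lipschitzness suffice to pass to $L_1$-convergence via dominated convergence, bypassing any compactness property of $(Z_j,d_\theta)$ itself.
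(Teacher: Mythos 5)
Your proof is correct, but it takes a genuinely different route from the paper, whose entire argument is a one-line appeal to bounded distortion and to \cite{AD}, together with the explicit candidate $\mathfrak{H}=\{f\in L_{1}(\mu):f\geq0,\ \int f\,d\mu=1,\ D_{\xi}(f)<K\}$. You instead take $\mathfrak{H}$ to be the closed convex hull of the normalized push-forward densities themselves and verify everything from scratch: the distortion/telescoping estimate giving the uniform bounds $h_W\le e^R/\flat$ and $D_\xi(\log h_W)\le M/(1-\theta)$, the exact convex-decomposition identity $\widehat{T}h_W=\sum_{W'\subseteq W}\frac{\mu(W')}{\mu(W)}h_{W'}$ for invariance (note this is a \emph{countable} convex combination, so it lands in the closed convex hull rather than the algebraic one — worth a half-sentence), and a diagonal Arzel\`a--Ascoli-type extraction plus Mazur for compactness. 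Your observation that equi-Lipschitzness substitutes for total boundedness of $(Z_j,d_\theta)$ is exactly the right fix, and dominated convergence on the probability space then closes the argument (the separate "tail tightness" step is harmless but redundant, since the uniform $L_\infty$-bound lets dominated convergence work on all of $X$ at once). One thing your construction buys that the paper's explicit description does not make visible: compactness genuinely requires a uniform $L_\infty$-bound in addition to the cylinderwise Lipschitz bound (normalized indicators of cylinders of small measure have $D_\xi=0$ yet admit no $L_1$-convergent subsequence), and in \cite{AD} the constant $K$ bounds the full Lipschitz norm including the sup-norm; your closed convex hull inherits the bound $e^R/\flat$ automatically, so this subtlety never arises.
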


\begin{proof}
This follows from bounded distortion, see for example \cite{AD}. One can
choose $\mathfrak{H}:=\{f\in L_{1}(\mu):f\geq0$, $\int f\,d\mu=1$, and $f$ has
a version with $D_{\xi}(f)<K\}$ for a suitable constant $K>0$.
\end{proof}%

\vspace{0.2cm}%

We provide one more abstract lemma which is useful for proving convergence of
finite-dimensional marginals.

\begin{lemma}
[\textbf{Uniform changes of measures}]\label{L_ChangeMeas}Let $(X,\mathcal{A}%
,\mu,T)$ be an ergodic probability preserving system, and $(G_{n})_{n\geq0}$ a
uniformly bounded sequence of measurable functions $G_{n}:X\rightarrow
\lbrack0,\infty)$ satisfying
\begin{equation}
G_{n}\circ T-G_{n}\overset{\mu}{\longrightarrow}0\text{.}%
\label{Eq_AsyTInvarGn}%
\end{equation}
Suppose that $\mathfrak{H}$ is a family of probability densities, strongly
compact in $L_{1}(\mu)$. Then
\begin{equation}
\int_{X}G_{n}\cdot v\,d\mu-\int_{X}G_{n}\cdot v^{\ast}\,d\mu\longrightarrow
0\text{ \quad}%
\begin{array}
[c]{l}%
\text{as }n\rightarrow\infty\text{,}\\
\text{uniformly in }u,u^{\ast}\in\mathfrak{H}\text{.}%
\end{array}
\end{equation}

\end{lemma}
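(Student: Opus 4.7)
My plan is to reduce the uniform statement to a pointwise one by a compactness argument, and then settle the pointwise claim via a Banach--Alaoglu/ergodicity contradiction. Set $M:=\sup_{n}\|G_{n}\|_{\infty}<\infty$ and, for each $n$, consider the functional $\Phi_{n}(v):=\int G_{n}\cdot v\,d\mu$ on $\mathfrak{H}$. The elementary estimate $|\Phi_{n}(v)-\Phi_{n}(v^{*})|\leq M\,\|v-v^{*}\|_{1}$ makes the family $\{\Phi_{n}\}$ equi-$M$-Lipschitz on $(\mathfrak{H},\|\cdot\|_{1})$. Combined with the strong compactness of $\mathfrak{H}$ in $L_{1}(\mu)$, a standard finite $\varepsilon$-net argument shows that it suffices to prove the pointwise assertion $\Phi_{n}(v)-\Phi_{n}(v^{*})\to 0$ for every fixed pair $v,v^{*}\in\mathfrak{H}$.

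For the pointwise statement I would argue by contradiction: if it fails, extract a subsequence $(n_{k})$ with $|\Phi_{n_{k}}(v)-\Phi_{n_{k}}(v^{*})|\geq\varepsilon$ for some $\varepsilon>0$. Since $(G_{n_{k}})$ is bounded in $L_{\infty}(\mu)$, pass to a further subsequence (still denoted $(n_{k})$) so that $G_{n_{k}}\to G$ in the weak-$*$ topology for some $G\in L_{\infty}(\mu)$. The crucial step is to verify that $G$ is $T$-invariant: given any $u\in L_{1}(\mu)$,
\[
\int(G\circ T-G)\,u\,d\mu \;=\; \lim_{k}\int G_{n_{k}}(\widehat{T}u-u)\,d\mu \;=\; \lim_{k}\int(G_{n_{k}}\circ T-G_{n_{k}})\,u\,d\mu \;=\; 0,
\]
the last equality being dominated convergence applied to integrands bounded by $2M|u|$ that tend to $0$ in measure by hypothesis (\ref{Eq_AsyTInvarGn}). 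Ergodicity then forces $G\equiv c$ for some constant $c\in[0,M]$, whence weak-$*$ convergence gives $\Phi_{n_{k}}(v)\to c\int v\,d\mu=c$ and likewise $\Phi_{n_{k}}(v^{*})\to c$, contradicting the lower bound $\varepsilon$.

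The only step needing genuine care is the verification that the weak-$*$ limit $G$ inherits exact $T$-invariance from the asymptotic invariance (\ref{Eq_AsyTInvarGn}); this is precisely where the uniform $L_{\infty}$-bound on $(G_{n})$ is used, since without it one could not upgrade convergence in measure of $G_{n_{k}}\circ T-G_{n_{k}}$ to convergence of its integrals against an arbitrary $L_{1}$-density. Everything else in the argument is soft: the Lipschitz reduction to a pointwise statement, the Banach--Alaoglu extraction, and the passage from constancy of $G$ to equality of the two limit integrals.
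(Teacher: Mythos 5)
Your proof is correct, but it takes a genuinely different route from the paper, which disposes of this lemma by citation: the intended argument is the classical companion of the mean ergodic theorem (Yosida; Theorem 2 of \cite{Z7}, Proposition 3.1 of \cite{Z6}). That argument runs roughly as follows: for fixed $j$, iterating (\ref{Eq_AsyTInvarGn}) and using boundedness gives $\int G_{n}\cdot\widehat{T}^{j}v\,d\mu=\int G_{n}\circ T^{j}\cdot v\,d\mu=\int G_{n}\cdot v\,d\mu+o(1)$, so averaging over $j<N$ and invoking the $L_{1}$ mean ergodic theorem ($N^{-1}\sum_{j<N}\widehat{T}^{j}v\rightarrow1_{X}$ by ergodicity, uniformly on the compact set $\mathfrak{H}$) shows $\int G_{n}\cdot v\,d\mu-\int G_{n}\,d\mu\rightarrow0$ uniformly in $v\in\mathfrak{H}$, which is slightly stronger than the stated conclusion. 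Your argument instead combines an equi-Lipschitz reduction to fixed $v,v^{\ast}$ with a weak-$\ast$ compactness/ergodicity contradiction; it is softer and entirely self-contained, at the price of being non-quantitative (the averaging proof exhibits the rate coming from the ergodic averages). One small technical point: sequential weak-$\ast$ compactness of the ball of $L_{\infty}(\mu)$ requires $L_{1}(\mu)$ separable, which is not assumed for a general ergodic probability preserving system; either pass to a weak-$\ast$ convergent subnet (your contradiction scheme survives verbatim) or restrict to the countably generated sub-$\sigma$-algebra generated by $\{G_{n}\circ T^{j}\}$, $v$ and $v^{\ast}$. With that caveat the identification $\int(G\circ T-G)u\,d\mu=\lim_{k}\int(G_{n_{k}}\circ T-G_{n_{k}})u\,d\mu=0$, hence $G$ constant by ergodicity, is exactly right, and is indeed the one place where the uniform $L_{\infty}$ bound is essential.
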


\begin{proof}
This follows from a classical companion (see \cite{Y} or Theorem 2 of
\cite{Z7}) to the mean ergodic theorem. In fact, it is contained in
Proposition 3.1 of \cite{Z6}.
\end{proof}%

\vspace{0.2cm}%
%

\noindent
\textbf{Convergence of marginals.} We can then establish

\begin{proposition}
[\textbf{Convergence of finite-dimensional marginals}]\label{P_Finidim}Under
the assumptions of Theorem \ref{T_SLTgivesFSLT} we have, for all $m\geq1$,
\begin{equation}
(\mathsf{S}_{t_{1}}^{[n]},\ldots,\mathsf{S}_{t_{m}}^{[n]})\overset{\mu
}{\Longrightarrow}(\mathsf{S}_{t_{1}},\ldots,\mathsf{S}_{t_{m}})\text{ \quad
as }n\rightarrow\infty\text{, for }0\leq t_{1}<\ldots<t_{m}\leq1\text{.
}\label{Eq_FiniDim}%
\end{equation}

\end{proposition}

\begin{proof}
\textbf{(i)} Asumption (\ref{Eq_BasicAssmThmImpli}) implies that
(\ref{Eq_FiniDim}) is satisfied for $m=1$. For the inductive step, we fix any
$m\geq1$ and assume validity of (\ref{Eq_FiniDim}). To prove (\ref{Eq_FiniDim}%
) with $m$ replaced by $m+1$ we fix any tuple $0<t<t_{1}<\ldots<t_{m}\leq1$
(the case $t=0$ being trivial), and any $s=(s^{(1)},\ldots,s^{(d)}%
)\in\mathbb{R}^{d}$ for which $\Pr[\mathsf{S}_{t}\leq s]>0$, where
$(r^{(1)},\ldots,r^{(d)})\leq(s^{(1)},\ldots,s^{(d)})$ means that $r^{(i)}\leq
s^{(i)}$ for all $i\in\{1,\ldots,d\}$. We are going to show that
\begin{equation}
(\mathsf{S}_{t_{1}}^{[n]}-\mathsf{S}_{t}^{[n]},\ldots,\mathsf{S}_{t_{m}}%
^{[n]}-\mathsf{S}_{t}^{[n]})\overset{\mu_{E_{n}}}{\Longrightarrow}%
(\mathsf{S}_{t_{1}-t},\ldots,\mathsf{S}_{t_{m}-t})\text{ \quad as
}n\rightarrow\infty\text{,}\label{Eq_cbxvuzserfusrzve}%
\end{equation}
where $E_{n}:=\{\mathsf{S}_{t}^{[n]}\leq s\}$. This suffices since
$\mathsf{S}$ is a stable L\'{e}vy motion, and the $m=1$ case of
(\ref{Eq_FiniDim}) guarantees that $\mu(E_{n})\rightarrow\Pr[\mathsf{S}%
_{t}\leq s]>0$.\newline\newline\textbf{(ii)} We will work with conditioning
events $F_{n}$ more convenient than the $E_{n}$. Define $E_{n}^{\prime
}:=\{\mathsf{S}_{t}^{[n]}-B_{n}^{-1}\vartheta_{f,\left\lfloor tn\right\rfloor
}(1,\ldots,1)\leq s\}$\ and $F_{n}:=%
{\textstyle\bigcup\nolimits_{Z\in\xi_{\left\lfloor tn\right\rfloor
}:\mathsf{S}_{t}^{[n]}\leq s\text{ somewhere on }Z}}
Z$, $n\geq1$. Obviously, $E_{n}\subseteq F_{n}$. Now $\vartheta_{f^{(i)}%
,j}\leq\vartheta_{f,j}$ if $f=(f^{(1)},\ldots,f^{(d)})$, and Lemma
\ref{L_OsciOnCyls} shows that $F_{n}\subseteq E_{n}^{\prime}$. Next, Lemma
\ref{L_ThetaFTail} gives
\begin{equation}
\vartheta_{f,\left\lfloor tn\right\rfloor }/B_{n}\overset{\mu}{\longrightarrow
}0\text{\quad as }n\rightarrow\infty\text{.}%
\end{equation}
Since $\mathsf{S}_{t}$ has a continuous distribution, $(\mu(E_{n}^{\prime}))$
has the same limit as $(\mu(E_{n}))$, and hence we also have $\mu
(F_{n})\rightarrow\Pr[\mathsf{S}_{t}\leq s]$. Therefore,
(\ref{Eq_cbxvuzserfusrzve}) is equivalent to
\begin{equation}
(\mathsf{S}_{t_{1}}^{[n]}-\mathsf{S}_{t}^{[n]},\ldots,\mathsf{S}_{t_{m}}%
^{[n]}-\mathsf{S}_{t}^{[n]})\overset{\mu_{F_{n}}}{\Longrightarrow}%
(\mathsf{S}_{t_{1}-t},\ldots,\mathsf{S}_{t_{m}-t})\text{ \quad as
}n\rightarrow\infty\text{.}\label{Eq_vdbhcdhbcdhbhbcch}%
\end{equation}
But $\mathsf{S}_{t^{\prime\prime}}^{[n]}-\mathsf{S}_{t^{\prime}}^{[n]}%
=B_{n}^{-1}(\mathbf{S}_{\left\lfloor t^{\prime\prime}n\right\rfloor
-\left\lfloor t^{\prime}n\right\rfloor }(f)-(\left\lfloor t^{\prime\prime
}n\right\rfloor -\left\lfloor t^{\prime}n\right\rfloor )A_{n}/n)\circ
T^{\left\lfloor t^{\prime}n\right\rfloor }$ for any $t^{\prime}<t^{\prime
\prime}$. Also, since $\left\vert \left\lfloor t^{\prime\prime}n\right\rfloor
-\left\lfloor t^{\prime}n\right\rfloor -\left\lfloor (t^{\prime\prime
}-t^{\prime})n\right\rfloor \right\vert \leq2$, we get
\[
\left\Vert (\mathsf{S}_{t^{\prime\prime}}^{[n]}-\mathsf{S}_{t^{\prime}}%
^{[n]})-\mathsf{S}_{t^{\prime\prime}-t^{\prime\prime}}^{[n]}\circ
T^{\left\lfloor t^{\prime}n\right\rfloor }\right\Vert \overset{\mu
}{\longrightarrow}0\text{\quad as }n\rightarrow\infty\text{.}%
\]
(Use (\ref{Eq_hjjhjhjhjhjhjjjjjjjjjjjjjjjjjjjjjjjjjjjjjjjjjjj2}) and the fact
that $(f\circ T^{k})/B_{k}\overset{\mu}{\longrightarrow}0$, which is immediate
from (\ref{Eq_DomainAttr}) and (\ref{Eq_DefBn}).) Therefore
(\ref{Eq_vdbhcdhbcdhbhbcch}) is equivalent to%
\begin{equation}
(\mathsf{S}_{t_{1}-t}^{[n]},\ldots,\mathsf{S}_{t_{m}-t}^{[n]})\overset
{\mu_{F_{n}}\circ T^{-\left\lfloor tn\right\rfloor }}{\Longrightarrow
}(\mathsf{S}_{t_{1}-t},\ldots,\mathsf{S}_{t_{m}-t})\text{ \quad as
}n\rightarrow\infty\text{.}\label{Eq_jnjhjnbjnjjjjjjnnj}%
\end{equation}
Note that $F_{n}$ is a $\xi_{\left\lfloor tn\right\rfloor }$-measurable set.
Therefore the density of $\mu_{F_{n}}\circ T^{-\left\lfloor tn\right\rfloor }%
$, that is, $u_{n}:=\widehat{T}^{\left\lfloor tn\right\rfloor }(\mu
(F_{n})^{-1}1_{F_{n}})$ belongs to the closed convex set $\mathfrak{H}$ of
Lemma \ref{L_CptEmbedding}.\newline\newline\textbf{(iii)} The desired
convergence (\ref{Eq_jnjhjnbjnjjjjjjnnj}) can be established by checking that
for every $G:\mathbb{R}^{md}\rightarrow\mathbb{R}$ of the form $G(s_{1}%
,\ldots,s_{m})=g_{1}(s_{1})\cdots g_{m}(s_{m})$ with bounded Lipschitz
functions $g_{j}:\mathbb{R}^{d}\rightarrow\mathbb{R}$ we have
\begin{equation}
\int G(\mathsf{S}_{t_{1}-t}^{[n]},\ldots,\mathsf{S}_{t_{m}-t}^{[n]}%
)\,u_{n}\,d\mu\longrightarrow\mathbb{E}[G(\mathsf{S}_{t_{1}-t},\ldots
,\mathsf{S}_{t_{m}-t})]\text{\quad as }n\rightarrow\infty\text{.}%
\label{Eq_easxeww}%
\end{equation}
Due to assumption (\ref{Eq_FiniDim}), (\ref{Eq_jnjhjnbjnjjjjjjnnj}) is valid
if the $\mu_{F_{n}}\circ T^{-\left\lfloor tn\right\rfloor }$ are replaced by
the single measure $\mu$, and hence (\ref{Eq_easxeww}) is valid if the $u_{n}$
are replaced by the density $1_{X}$ of $\mu$. Therefore, (\ref{Eq_easxeww})
follows once we prove that
\begin{equation}
\int G(\mathsf{S}_{t_{1}-t}^{[n]},\ldots,\mathsf{S}_{t_{m}-t}^{[n]}%
)\,(u_{n}-1_{X})\,d\mu\longrightarrow0\text{\quad as }n\rightarrow
\infty\text{.}\label{Eq_wbsbwbwbw}%
\end{equation}
But letting $G_{n}:=G(\mathsf{S}_{t_{1}-t}^{[n]},\ldots,\mathsf{S}_{t_{m}%
-t}^{[n]})$ it is easy to see that
\[
\left\vert G_{n}\circ T-G_{n}\right\vert \leq\frac{L\Gamma^{m-1}}{B_{n}}%
\sum_{j=1}^{m}\left(  \left\Vert f\right\Vert +\left\Vert f\right\Vert \circ
T^{\left\lfloor (t_{j}-t)n\right\rfloor }\right)  \text{,}%
\]
with $L$ a common Lipschitz constant for the $g_{j}$, and $\Gamma:=\max_{1\leq
j\leq m}\sup\left\Vert g_{j}\right\Vert $. Since $T$ preserves $\mu$ and
$B_{n}\rightarrow\infty$, the asymptotic invariance property
(\ref{Eq_AsyTInvarGn}) follows. Lemma \ref{L_ChangeMeas} then gives
(\ref{Eq_wbsbwbwbw}) since $u_{n}\in\mathfrak{H}$ for all $n\geq1$.
\end{proof}%

\vspace{0.2cm}%

\section{Maximal Inequalities and Tightness}%

\noindent
\textbf{Maximal inequalities.} The proof of tightness will depend on the
following maximal inequalities, which constitute the main technical tool of
the present paper.\ (An easier version of these arguments has been used in
\cite{ZStochDyn}.) Let $(\mathfrak{F},\left\Vert \centerdot\right\Vert )$ be a
normed space.

\begin{lemma}
[\textbf{Maximal inequalities for ergodic sums}]\label{L_MaxIneq1}Let
$(X,\mathcal{A},\mu,T,\xi)$ be a probability preserving Gibbs-Markov map, and
$g:X\rightarrow\mathfrak{F}$ an observable with $\vartheta_{g}<\infty$ a.e. on
$X$. Denote $S_{n}=\mathbf{S}_{n}(g)$, $n\geq0$. Then, for any $n\geq1$ and
$\kappa>0$,%
\begin{equation}
\mu\left(  \max_{1\leq k\leq n}\left\Vert S_{k}\right\Vert >\kappa\right)
\leq\frac{2e^{R}}{a}\,\max_{1\leq k\leq n}\mu\left(  \left\Vert S_{k}%
\right\Vert >\frac{\kappa}{4}\right)  +n\max_{1\leq k\leq n}\mu\left(
\vartheta_{g,k}>\frac{\kappa}{4}\right)  \text{,}\label{Eq_MI1_a}%
\end{equation}
while
\begin{equation}
\mu\left(  \max_{1\leq k\leq n}\left\Vert S_{n}-S_{k}\right\Vert
>\kappa\right)  \leq\mu\left(  \max_{1\leq k\leq n}\left\Vert S_{k}\right\Vert
>\frac{\kappa}{2}\right)  \text{.}\label{Eq_MI1_b}%
\end{equation}
Moreover, for any $n\geq1$ and $\kappa>0$,
\begin{align}
&  \mu\left(  \max_{1\leq i<j<l\leq n}\left(  \left\Vert S_{j}-S_{i}%
\right\Vert \wedge\left\Vert S_{l}-S_{j}\right\Vert \right)  >\kappa\right)
\nonumber\\
&  \leq\frac{e^{R}}{a}\mu\left(  \max_{1\leq k\leq n}\left\Vert S_{k}%
\right\Vert >\frac{\kappa}{4}\right)  \,\left[  \mu\left(  \max_{1\leq k\leq
n}\left\Vert S_{k}\right\Vert >\frac{\kappa}{4}\right)  +n\max_{1\leq k\leq
n}\mu\left(  \vartheta_{g,k}>\frac{\kappa}{4}\right)  \right]  \text{.}%
\label{Eq_MI1_ccc}%
\end{align}

\end{lemma}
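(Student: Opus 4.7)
All three inequalities are Gibbs--Markov analogues of Etemadi-type maximal bounds, with bounded distortion (\ref{Eq_UpperBoundDistortion}) playing the role of independence. Setting $M_n:=\max_{1\leq k\leq n}\|S_k\|$, the middle inequality (\ref{Eq_MI1_b}) is immediate from the triangle inequality $\|S_n-S_k\|\leq\|S_n\|+\|S_k\|\leq 2M_n$, which forces $M_n>\kappa/2$ on the event on the left. For (\ref{Eq_MI1_a}) and (\ref{Eq_MI1_ccc}) the idea is to use first-passage times adapted to the filtration $(\xi_k)$; since the natural choice $\min\{k:\|S_k\|>\kappa\}$ is not $\xi_k$-measurable, I will replace the relevant quantity by its $\xi_k$-measurable \emph{inner approximation} on rank-$k$ cylinders, absorbing the discrepancy into a $\vartheta_{g,k}$ correction by means of Lemma~\ref{L_OsciOnCyls}.

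\textbf{For (\ref{Eq_MI1_a}).} Let $\tilde S_k(x):=\inf\{\|S_k(y)\|:y\in\xi_k(x)\}$, which is $\xi_k$-measurable and, by Lemma~\ref{L_OsciOnCyls}, satisfies $\tilde S_k\geq\|S_k\|-\vartheta_{g,k}$. With $\tilde\tau:=\min\{k:\tilde S_k>\kappa/2\}$ we get $\{M_n>\kappa\}\subseteq\{\tilde\tau\leq n\}\cup\bigcup_{k\leq n}\{\vartheta_{g,k}>\kappa/2\}$, and the latter union already accounts for at most $n\,\max_k\mu(\vartheta_{g,k}>\kappa/4)$. To handle $\{\tilde\tau\leq n\}$, I Etemadi-split each $\{\tilde\tau=k\}$ according to whether $\|S_n\|>\kappa/4$: on the complement, $\|S_k\|\geq\tilde S_k>\kappa/2$ forces $\|S_n-S_k\|>\kappa/4$. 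The crucial point is that $\{\tilde\tau=k\}$ is $\xi_k$-measurable while $\{\|S_n-S_k\|>\kappa/4\}=T^{-k}\{\|S_{n-k}\|>\kappa/4\}$, so bounded distortion (\ref{Eq_UpperBoundDistortion}) gives $\mu(\{\tilde\tau=k\}\cap T^{-k}\{\|S_{n-k}\|>\kappa/4\})\leq(e^R/\flat)\mu(\tilde\tau=k)\,\mu(\|S_{n-k}\|>\kappa/4)$. Summing over $k$, using $\sum_k\mu(\tilde\tau=k)\leq 1$ and $\flat\leq 1$ to consolidate the constants into $2e^R/\flat$, yields (\ref{Eq_MI1_a}).

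\textbf{For (\ref{Eq_MI1_ccc}).} The strategy mirrors the above, with a \emph{lagged-increment} first-passage time supplying the product structure on the right. Put $L_k:=\max_{1\leq i<k}\|S_k-S_i\|$ and $\tilde L_k(x):=\inf_{y\in\xi_k(x)}L_k(y)$. The oscillation of $S_k-S_i=\sum_{j=i}^{k-1}g\circ T^j$ on a rank-$k$ cylinder is bounded by $\sum_{j=i}^{k-1}\theta^{k-j}(\vartheta_g\circ T^j)\leq\vartheta_{g,k}$ (the omitted $j<i$ terms being non-negative), so $\tilde L_k\geq L_k-\vartheta_{g,k}$. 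Write $E$ for the event on the left of (\ref{Eq_MI1_ccc}) and set $\tau:=\min\{k:\tilde L_k>\kappa/2\}$; the middle index $j$ of the triple $i<j<l$ in $E$ satisfies $L_j>\kappa$, so $E\subseteq\{\tau\leq n-1\}\cup\bigcup_k\{\vartheta_{g,k}>\kappa/2\}$. Furthermore, on $E$ there is a \emph{second} large increment $\|S_l-S_{j'}\|>\kappa$ with $\tau\leq j'<l\leq n$; the triangle inequality then forces $\max_{\tau\leq k\leq n}\|S_k-S_\tau\|>\kappa/2$, i.e.\ $T^{-\tau}\{M_{n-\tau}>\kappa/2\}$. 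Decomposing by the $\xi_{k_0}$-measurable event $\{\tau=k_0\}$ and applying (\ref{Eq_UpperBoundDistortion}) produces a product-type bound whose prefactor $\mu(\tau\leq n)$ is controlled by $\mu(M_n>\kappa/4)$ (from $L_k\leq 2M_k$). The main obstacle will be arranging the $\vartheta_{g,k}$-error so that it appears \emph{inside} the bracket of (\ref{Eq_MI1_ccc}) (thereby inheriting the prefactor $\mu(M_n>\kappa/4)$), rather than as a standalone additive correction produced by an upfront oscillation cut-off; I plan to achieve this by keeping the cut-off inside the post-$\tau$ step and controlling that inner event through a bound of the same shape as (\ref{Eq_MI1_a}) applied to the shifted partial-sum sequence, so that the $n\max_k\mu(\vartheta_{g,k}>\kappa/4)$ term emerges from within the bounded-distortion sum and is multiplied by $(e^R/\flat)\mu(\tau\leq n)$.
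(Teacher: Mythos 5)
Your treatment of (\ref{Eq_MI1_b}) is the paper's, and your argument for (\ref{Eq_MI1_a}) is correct and essentially the paper's route as well: the paper's families $\gamma_k(\kappa)$ are exactly a $\xi_k$-measurable first-passage decomposition over rank-$k$ cylinders, followed by an Etemadi split and bounded distortion, with Lemma~\ref{L_OsciOnCyls} absorbing the oscillation into the additive $\vartheta_{g,k}$ term. The only (harmless) difference is that you stop at the first $k$ with $\inf_{\xi_k(x)}\Vert S_k\Vert>\kappa/2$, while the paper stops at the first $k$ with $\sup_{\xi_k(x)}\Vert S_k\Vert>\kappa$; for (\ref{Eq_MI1_a}) either choice delivers the stated additive bound.

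For (\ref{Eq_MI1_ccc}), however, the inner ($\inf$-based) approximation is the wrong choice and leaves a genuine gap. Your covering $E\subseteq\{\tau\leq n-1\}\cup\bigcup_{k}\{\vartheta_{g,k}>\kappa/2\}$ produces a \emph{standalone} additive term $n\max_{k}\mu(\vartheta_{g,k}>\kappa/2)$ that is not multiplied by $\mu(\max_k\Vert S_k\Vert>\kappa/4)$, whereas in (\ref{Eq_MI1_ccc}) the $\vartheta$-term sits \emph{inside} the bracket and inherits that prefactor. This is not a matter of constants: in step (v) of the tightness proof the right-hand side of (\ref{Eq_MI1_ccc}) is divided by $\delta$, and only the product structure makes the result $O(\delta)$; a standalone term of size $n\max_k\mu(\vartheta_{g,k}>\kappa/4)\approx C^{\ast}(\varepsilon)\delta$ would contribute $O(1)$ after the division and ($\heartsuit_3$) would not follow. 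Your proposed repair (``keeping the cut-off inside the post-$\tau$ step'') does not address this, because the loss occurs \emph{before} the post-$\tau$ step: the points of $E$ at which $\vartheta_{g,j}>\kappa/2$ at the middle index are simply not in $\{\tau\leq n\}$, and no refinement of what happens after $\tau$ recovers them. The paper's fix is to use the \emph{outer} approximation instead: stop at the first $k$ with $\sup_{\xi_k(x)}\Vert S_k\Vert>\kappa/2$ (the families $\gamma_k(\kappa/2)$). Then the stopped cylinders cover $\{\max_{k<n}\Vert S_k\Vert>\kappa/2\}\supseteq E$ with \emph{no} oscillation error, bounded distortion gives $\mu(E)\leq(e^{R}/\flat)\,\mu(\max_k\Vert S_k\Vert>\kappa/2)\,\mu\bigl(\bigcup_k\bigcup_{Z\in\gamma_k(\kappa/2)}Z\bigr)$, and Lemma~\ref{L_OsciOnCyls} is invoked only at the very end, to bound the measure of the union of stopped cylinders by $\mu(\max_k\Vert S_k\Vert>\kappa/4)+n\max_k\mu(\vartheta_{g,k}>\kappa/4)$ --- which is precisely the bracket. (Alternatively, your leftover set could be salvaged by a second bounded-distortion step pairing the $\xi_j$-measurable event $\{\vartheta_{g,j}>\kappa/2\}$ with the second large increment after time $j$, but that is a different argument from the one you sketch and would still need to be written out.)
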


\begin{proof}
\textbf{(i)} We fix $\kappa>0$ and define families of cylinders by
\begin{align}
\gamma_{1}(\kappa) &  :=\left\{  Z\in\xi_{1}:\sup_{Z}\left\Vert S_{1}%
\right\Vert >\kappa\right\}  \text{ \quad and, for }k\geq1\text{,
}\label{Eq_DefGammas}\\
\gamma_{k+1}(\kappa) &  :=\left\{  Z\in\left(
{\textstyle\bigcup\nolimits_{j=1}^{k}}
{\textstyle\bigcup\nolimits_{W\in\gamma_{j}(\kappa)}}
W\right)  ^{c}\cap\xi_{k+1}:\sup_{Z}\left\Vert S_{k+1}\right\Vert
>\kappa\right\} \nonumber
\end{align}
(where, for $E$ measurable w.r.t. a partition $\eta$, $E\cap\eta:=\{T\in
\eta:T\subseteq E\}$). Then,
\begin{equation}
\left\{  \max_{1\leq k<n}\left\Vert S_{k}\right\Vert >\kappa\right\}
\subseteq\bigcup_{k=1}^{n-1}\bigcup_{Z\in\gamma_{k}(\kappa)}Z\text{
\quad(disjoint),}\label{Eq_uauauauauauauaua}%
\end{equation}
and, since $\inf_{Z}\left\Vert S_{k}\right\Vert >\kappa-\vartheta_{g,k}(Z)$
for $Z\in\gamma_{k}(\kappa)$ by Lemma \ref{L_OsciOnCyls}, we get
\begin{multline*}
\left\{  \max_{1\leq k<n}\left\Vert S_{k}\right\Vert >\kappa\right\}
\cap\left\{  \left\Vert S_{n}\right\Vert \leq\frac{\kappa}{2}\right\}
\subseteq\bigcup_{k=1}^{n-1}\bigcup_{Z\in\gamma_{k}(\kappa)}Z\cap\left\{
\left\Vert S_{n}\right\Vert \leq\frac{\kappa}{2}\right\} \\
\subseteq\bigcup_{k=1}^{n-1}\bigcup_{Z\in\gamma_{k}(\kappa)}Z\cap\left(
\left\{  \left\Vert S_{n}-S_{k}\right\Vert >\frac{\kappa}{4}\right\}
\cup\left\{  \vartheta_{g,k}>\frac{\kappa}{4}\right\}  \right)  \text{.}%
\end{multline*}
According to (\ref{Eq_UpperBoundDistortion}) we see that for any $Z\in
\gamma_{k}(\kappa)$, $1\leq k<n$,
\begin{align*}
\mu\left(  Z\cap\left\{  \left\Vert S_{n}-S_{k}\right\Vert >\frac{\kappa}%
{4}\right\}  \right)   & =\mu\left(  Z\cap T^{-k}\left\{  \left\Vert
S_{n-k}\right\Vert >\frac{\kappa}{4}\right\}  \right) \\
& \leq\frac{e^{R}}{a}\,\mu(Z)\,\mu\left(  \left\Vert S_{n-k}\right\Vert
>\frac{\kappa}{4}\right)  \text{.}%
\end{align*}
Combining these observations we find that
\begin{multline*}
\mu\left(  \left\{  \max_{1\leq k<n}\left\Vert S_{k}\right\Vert >\kappa
\right\}  \cap\left\{  \left\Vert S_{n}\right\Vert \leq\frac{\kappa}%
{2}\right\}  \right) \\
\leq\frac{e^{R}}{a}\sum_{k=1}^{n-1}\sum_{Z\in\gamma_{k}(\kappa)}\mu
(Z)\,\mu\left(  \left\Vert S_{n-k}\right\Vert >\frac{\kappa}{4}\right)
+\sum_{k=1}^{n-1}\sum_{Z\in\gamma_{k}(\kappa)}\mu\left(  Z\cap\left\{
\vartheta_{g,k}>\frac{\kappa}{4}\right\}  \right) \\
\leq\frac{e^{R}}{a}\,\max_{1\leq k<n}\mu\left(  \left\Vert S_{k}\right\Vert
>\frac{\kappa}{4}\right)  +n\max_{1\leq k<n}\mu\left(  \vartheta_{g,k}%
>\frac{\kappa}{4}\right)  \text{,}%
\end{multline*}
where the last step again uses that $\{Z\in\gamma_{k}(\kappa):k\geq1\}$ is a
family of pairwise disjoint sets. This implies our maximal inequality
(\ref{Eq_MI1_a}) if we also note that
\[
\mu\left(  \max_{1\leq k\leq n}\left\Vert S_{k}\right\Vert >\kappa\right)
\leq\mu\left(  \left\Vert S_{n}\right\Vert >\frac{\kappa}{2}\right)
+\mu\left(  \left\{  \max_{1\leq k<n}\left\Vert S_{k}\right\Vert
>\kappa\right\}  \cap\left\{  \left\Vert S_{n}\right\Vert \leq\frac{\kappa}%
{2}\right\}  \right)  \text{.}%
\]
The second inequality (\ref{Eq_MI1_b}) of our lemma is immediate from
\[
\left\{  \max_{1\leq k\leq n}\left\Vert S_{n}-S_{k}\right\Vert >\kappa
\right\}  \subseteq\left\{  \left\Vert S_{n}\right\Vert >\frac{\kappa}%
{2}\right\}  \cup\left\{  \max_{1\leq k<n}\left\Vert S_{k}\right\Vert
>\frac{\kappa}{2}\right\}  \text{.}%
\]
%

\noindent
\textbf{(ii)} Fixing $\kappa>0$, we first note that by
(\ref{Eq_uauauauauauauaua}),
\[
\left\{  \max_{1\leq i<j<n}\left\Vert S_{j}-S_{i}\right\Vert >\kappa\right\}
\subseteq\left\{  \max_{1\leq k<n}\left\Vert S_{k}\right\Vert >\frac{\kappa
}{2}\right\}  \subseteq\bigcup_{k=1}^{n-1}\bigcup_{Z\in\gamma_{k}(\kappa
/2)}Z\text{ \quad(disjoint).}%
\]
Assume that $x\in\{\max_{1\leq i<j<l\leq n}(\left\Vert S_{j}-S_{i}\right\Vert
\wedge\left\Vert S_{l}-S_{j}\right\Vert )>\kappa\}$. Then $\left\Vert
S_{m}(x)\right\Vert >\kappa/2$ for some $m\leq n$, and therefore there are
$k\in\{1,\ldots,n\}$ and $Z\in\gamma_{k}(\kappa/2)$ such that $x\in Z$ (use
(\ref{Eq_uauauauauauauaua}) again). Choose $1\leq i<j<l\leq n$ such that
$\left\Vert S_{j}(x)-S_{i}(x)\right\Vert >\kappa$ and $\left\Vert
S_{l}(x)-S_{j}(x)\right\Vert >\kappa$. According to the definition of
$\gamma_{k}(\kappa/2)$ above, we have $\left\Vert S_{h}(x)\right\Vert
\leq\kappa/2$ for all $h<k$, so that (due to $\left\Vert S_{j}(x)-S_{i}%
(x)\right\Vert >\kappa$) necessarily $j\geq k$. We claim that
\[
\max_{k<h\leq n}\left\Vert S_{h}(x)-S_{k}(x)\right\Vert >\kappa/2\text{.}%
\]
In case $j=k$ this is clear for $h=l$, by our choice of $j$ and $l$. On the
other hand, if $j>k$, then $\left\Vert S_{l}(x)-S_{j}(x)\right\Vert >\kappa$
ensures that $\left\Vert S_{j}(x)-S_{k}(x)\right\Vert >\kappa/2$ or
$\left\Vert S_{l}(x)-S_{k}(x)\right\Vert >\kappa/2$, and we can take $h=j$ or
$h=l$, proving our claim.

We therefore see that for any $k\in\{1,\ldots,n\}$ and $Z\in\gamma_{k}%
(\kappa/2)$,
\begin{align*}
Z\cap\left\{  \max_{1\leq i<j<l\leq n}\left(  \left\Vert S_{j}-S_{i}%
\right\Vert \wedge\left\Vert S_{l}-S_{j}\right\Vert \right)  >\kappa\right\}
& \subseteq Z\cap\left\{  \max_{k<h\leq n}\left\Vert S_{h}-S_{k}\right\Vert
>\frac{\kappa}{2}\right\} \\
& =Z\cap T^{-k}\left\{  \max_{1\leq l\leq n-k}\left\Vert S_{l}\right\Vert
>\frac{\kappa}{2}\right\}  \text{,}%
\end{align*}
and hence
\begin{multline*}
\mu\left(  \max_{1\leq i<j<l\leq n}\left(  \left\Vert S_{j}-S_{i}\right\Vert
\wedge\left\Vert S_{l}-S_{j}\right\Vert \right)  >\kappa\right) \\
\leq\sum_{k=1}^{n}\sum_{Z\in\gamma_{k}(\kappa/2)}\mu\left(  Z\cap
T^{-k}\left\{  \max_{1\leq l\leq n-k}\left\Vert S_{l}\right\Vert >\frac
{\kappa}{2}\right\}  \right) \\
\leq\frac{e^{R}}{a}\sum_{k=1}^{n}\sum_{Z\in\gamma_{k}(\kappa/2)}\mu
(Z)\,\mu\left(  \max_{1\leq l\leq n-k}\left\Vert S_{l}\right\Vert
>\frac{\kappa}{2}\right) \\
\leq\frac{e^{R}}{a}\,\mu\left(  \max_{1\leq l\leq n}\left\Vert S_{l}%
\right\Vert >\frac{\kappa}{2}\right)  \,\mu\left(  \bigcup_{k=1}^{n}%
\bigcup_{Z\in\gamma_{k}(\kappa/2)}Z\right)  \text{.}%
\end{multline*}
But as $\inf_{Z}\left\Vert S_{k}\right\Vert >\kappa/2-\vartheta_{g,k}(Z)$ for
$Z\in\gamma_{k}(\kappa/2)$, we have $Z\subseteq\{\left\Vert S_{k}\right\Vert
>\kappa/4\}\cup\{\vartheta_{g,k}>\kappa/4\}$ for such $Z $, and therefore
\[
\mu\left(  \bigcup_{k=1}^{n}\bigcup_{Z\in\gamma_{k}(\kappa/2)}Z\right)
\leq\,\mu\left(  \max_{1\leq k\leq n}\left\Vert S_{k}\right\Vert >\frac
{\kappa}{4}\right)  +n\max_{1\leq k\leq n}\mu\left(  \vartheta_{g,k}%
>\frac{\kappa}{4}\right)  \text{.}%
\]
Combining the last two estimates yields inequality (\ref{Eq_MI1_ccc}).
\end{proof}%

\vspace{0.2cm}%
%

\noindent
\textbf{Control of translation sequence.} We shall also use the following
observation regarding the sequence $(A_{n})$. Note that the assertion of the
lemma below is trivial in many cases (when $A_{n}=0$ or $A_{n}=const\cdot n$).
The $d$-dimensional $\alpha$-stable L\'{e}vy process $\mathsf{S}%
=(\mathsf{S}_{t})_{t\geq0}$ with $\mathsf{S}_{1}\overset{d}{=}S$ satisfies
$\mathsf{S}_{t}\overset{d}{=}t^{1/\alpha}S+\mathsf{a}_{t}$, $t\geq0$, for some
continuous function $t\mapsto\mathsf{a}_{t}=(\mathsf{a}_{t}^{(1)}%
,\ldots,\mathsf{a}_{t}^{(d)})$, $t\geq0$, which appears in the following statement.

\begin{lemma}
[\textbf{Uniform control of translation vectors} $A_{n}$]\label{L_TheAn}Under
the assumptions of Theorem \ref{T_SLTgivesFSLT}, the normalizing sequence
$(A_{n},B_{n})$ satisfies
\begin{equation}
\max_{1\leq k\leq n}\left\Vert \frac{1}{B_{n}}\left(  A_{k}-\frac{k}{n}%
A_{n}\right)  -\mathsf{a}_{\frac{k}{n}}\right\Vert \longrightarrow0\text{
\quad as }n\rightarrow\infty\text{.}\label{Eq_UnifControlAn}%
\end{equation}

\end{lemma}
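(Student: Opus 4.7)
The lemma is purely deterministic, and I would argue by cases on $\alpha\in(0,2)$. If $\alpha\in(0,1)\cup(1,2)$, the function $\omega_\alpha$ in (\ref{Eq_TheFourierTransformOfG}) is constant in $t$, so $S$ is strictly $\alpha$-stable and $\mathsf{S}_t\overset{d}{=}t^{1/\alpha}S$, forcing $\mathsf{a}_t\equiv 0$. The canonical prescription makes $A_n=0$ (when $\alpha<1$) or $A_n=n\int f\,d\mu$ (when $\alpha>1$), both of which satisfy $A_k=(k/n)A_n$ exactly. Hence the left-hand side of (\ref{Eq_UnifControlAn}) vanishes identically, as the remark preceding the lemma already signals.

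The substantive case is $\alpha=1$, which I handle componentwise. The prescription of \cite{AD} yields $A_n^{(i)}=n\,b_n^{(i)}$ with $b_n^{(i)}:=\mathbb{E}[f^{(i)}\mathbf{1}_{|f^{(i)}|\leq B_n}]$. A direct computation from (\ref{Eq_TheFourierTransformOfG}) via the L\'evy exponent $\log\phi_{\mathsf{S}_t}=t\log\phi_S$ together with the scaling $\phi_{tS}(\theta)=\phi_S(t\theta)$ gives $\mathsf{a}_t^{(i)}=\overline{\beta}^{(i)}\,t\log t$, where $\overline{\beta}^{(i)}:=c_+^{(i)}-c_-^{(i)}$. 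For $1\leq k\leq n$, assuming without loss of generality $B_k\leq B_n$,
\[
A_k^{(i)}-\tfrac{k}{n}A_n^{(i)} \;=\; k\bigl(b_k^{(i)}-b_n^{(i)}\bigr) \;=\; -k\,\mathbb{E}\bigl[f^{(i)}\mathbf{1}_{B_k<|f^{(i)}|\leq B_n}\bigr].
\]
Integration by parts, the tail asymptotics $\mu(f^{(i)}>s)-\mu(f^{(i)}<-s)=(\overline{\beta}^{(i)}+o(1))\,s^{-1}\ell(s)$ from (\ref{Eq_DomainAttr}), and the defining relation $n\ell(B_n)=B_n$ then produce
\[
\frac{A_k^{(i)}-(k/n)A_n^{(i)}}{B_n} \;=\; \overline{\beta}^{(i)}\,\frac{k}{n}\log\frac{B_k}{B_n} \;+\; o(1),
\]
with the $o(1)$ uniform in $k$ by Potter's theorem, applied both to the ratio $\ell(s)/\ell(B_n)$ on $[B_k,B_n]$ and to the boundary contributions.

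To promote this to uniform convergence in $k\in\{1,\ldots,n\}$, fix $\varepsilon>0$ and split the range. On $\{k:k/n\geq\varepsilon\}$, regular variation of $(B_n)$ with index $1$ gives $\log(B_k/B_n)-\log(k/n)\to 0$ uniformly (another Potter estimate), and $t\mapsto t\log t$ is uniformly continuous on $[\varepsilon,1]$, so the expression converges uniformly to $\mathsf{a}_{k/n}^{(i)}$. On $\{k:k/n<\varepsilon\}$, both $\mathsf{a}_{k/n}^{(i)}$ and the displayed left-hand side are bounded in modulus by $C\varepsilon|\log\varepsilon|$, which is arbitrarily small. The main obstacle I foresee is precisely this uniform control of the slowly varying corrections down to very small $k/n$; Potter's theorem is the right tool, but its application requires careful bookkeeping of the boundary contributions in the integration by parts.
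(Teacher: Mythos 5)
Your overall strategy --- explicit formulas for $A_n$ plus Potter estimates --- can be pushed through for the canonical normalizing sequence, but as written it has two genuine gaps. First, Lemma \ref{L_TheAn} is stated (and is later applied in the proof of Proposition \ref{P_Tightness}) for an \emph{arbitrary} pair $(A_n,B_n)$ satisfying (\ref{Eq_BasicAssmThmImpli}); Theorem \ref{T_SLTgivesFSLT} does not assume the canonical prescription. If $S$ is not strictly stable, i.e.\ $S\overset{d}{=}S_0+b$ with $S_0$ strictly stable and $b\neq0$, then $\mathsf{a}_t=(t-t^{1/\alpha})b\not\equiv0$ even for $\alpha\neq1$, and correspondingly no admissible $A_n$ satisfies $A_k=\frac{k}{n}A_n$; moreover any admissible $A_n$ may be perturbed by $o(B_n)$ terms. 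So the claim that the left-hand side of (\ref{Eq_UnifControlAn}) ``vanishes identically'' for $\alpha\neq1$ is false in the generality of the lemma. One must first reduce to the canonical case (replace $A_n$ by $A_n+B_n b$, absorb an $o(B_n)$ discrepancy $\delta_n$ by checking $\max_{k\leq n}\Vert\delta_k-\tfrac{k}{n}\delta_n\Vert/B_n\to0$, and show $\max_{k\leq n}|B_k/B_n-(k/n)^{1/\alpha}|\to0$ --- each of which again needs uniform regular-variation estimates). Second, in the case $\alpha=1$ the uniformity in $k$ down to $k=o(n)$ is not an incidental obstacle: it \emph{is} the content of the lemma, since the pointwise statement for fixed $t=k/n$ is classical. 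Your splitting plan is workable, but you leave it unproven; note also that the boundary terms of the integration by parts contribute $\overline{\beta}^{(i)}(B_k/B_n-k/n)$, which must separately be shown to be uniformly $o(1)$, and Potter bounds are only available for $B_k\geq s_0$, so the range of bounded $k$ needs its own (easy) argument via $A_n^{(i)}=o(nB_n)$.

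For comparison, the paper's proof is entirely soft: it argues by contradiction along a subsequence with $k_j/n_j\to s$, passes to the iid model via Proposition \ref{Prop_GMvsIID}, invokes Skorokhod's classical $\mathcal{J}_1$-FSLT together with a deterministic time change $\lambda_n$, and obtains (\ref{Eq_UnifControlAn}) by comparing the two resulting distributional limits $s^{1/\alpha}\widehat{\mathsf{S}}_1$ and $s^{1/\alpha}\widehat{\mathsf{S}}_1+\mathsf{a}_s^{(i)}$. That route requires no formula for $A_n$, no case distinction in $\alpha$, and no Potter bookkeeping. Your computational route, once repaired as above, is a legitimate alternative that makes the $t\log t$ drift explicit, but it is substantially heavier and currently proves a weaker statement than the one the tightness argument needs.
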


\begin{proof}
\textbf{(i)} Assume the contrary, and write $A_{n}=(A_{n}^{(1)},\ldots
,A_{n}^{(d)})$. Then there are $i\in\{1,\ldots,d\}$, $\eta>0$ and sequences
$(n_{j})_{j\geq1}$ and $(k_{j})_{j\geq1}$ in $\mathbb{N}$ such that $k_{j}\leq
n_{j}$ and
\begin{equation}
\left\vert \frac{1}{B_{n_{j}}}\left(  A_{k_{j}}^{(i)}-\frac{k_{j}}{n_{j}%
}A_{n_{j}}^{(i)}\right)  -\mathsf{a}_{\frac{k_{j}}{n_{j}}}^{(i)}\right\vert
>\eta\text{ \quad for }j\geq1\text{,}\label{Eq_TheBadGuys}%
\end{equation}
while $s_{j}:=k_{j}/n_{j}\longrightarrow s$\ as $j\rightarrow\infty$ for some
$s\in\lbrack0,1]$. But we are going to show that for any $s$ and
$(s_{n})_{n\geq1}$ in $(0,1]$ with $s_{n}\rightarrow s$,
\begin{equation}
\frac{1}{B_{n}}\left(  A_{\left\lfloor s_{n}n\right\rfloor }^{(i)}-s_{n}%
A_{n}^{(i)}\right)  -\mathsf{a}_{s}^{(i)}\longrightarrow0\text{ \quad as
}n\rightarrow\infty\text{,}\label{Eq_TheGoalNow}%
\end{equation}
which contradicts (\ref{Eq_TheBadGuys}) and therefore proves
(\ref{Eq_UnifControlAn}).\newline\newline\textbf{(ii)} To validate
(\ref{Eq_TheGoalNow}), recall first that we have observed (Proposition
\ref{Prop_GMvsIID}) that if $(Z_{k})_{k\geq1}$ is an iid sequence of random
variables on some probability space $(\Omega,\mathcal{F},\mathrm{\Pr})$ with
the same distribution as $f^{(i)}$, and $\widehat{S}_{n}:=\sum_{k=0}%
^{n-1}Z_{k}$, then $B_{n}^{-1}(\widehat{S}_{n}-A_{n}^{(i)})\Longrightarrow
S^{(i)}$.

Define $\widehat{\mathsf{S}}^{[n]}=(\widehat{\mathsf{S}}_{t}^{[n]}%
)_{t\in\lbrack0,1]}$ by $\widehat{\mathsf{S}}_{t}^{[n]}:=B_{n}^{-1}%
(\widehat{S}_{\left\lfloor tn\right\rfloor }-\left\lfloor tn\right\rfloor
A_{n}^{(i)}/n)$. Now use Skorokhod's classical $\mathcal{J}_{1}$-FSLT
(\cite{Sk2}) to see that $\widehat{\mathsf{S}}^{[n]}\Longrightarrow
\widehat{\mathsf{S}}$ in the $\mathcal{J}_{1}$-topology, with $\widehat
{\mathsf{S}}$ the one-dimensional stable L\'{e}vy process with $\widehat
{\mathsf{S}}_{1}\overset{d}{=}S^{(i)}$.

In particular, $\widehat{\mathsf{S}}_{1}^{[s_{n}n]}\Longrightarrow
\widehat{\mathsf{S}}_{1}$, and due to regular variation of $(B_{n})$,
\begin{equation}
V_{n}:=\frac{B_{\left\lfloor s_{n}n\right\rfloor }}{B_{n}}\frac{1}%
{B_{\left\lfloor s_{n}n\right\rfloor }}(\widehat{S}_{\left\lfloor
s_{n}n\right\rfloor }-A_{s_{n}n}^{(i)})\Longrightarrow s^{1/\alpha}%
\widehat{\mathsf{S}}_{1}\text{ \quad as }n\rightarrow\infty\text{.}%
\label{Eq_FirstObs}%
\end{equation}
Now consider homeomorphisms $\lambda_{n}\in\Lambda$ of $[0,1]$ with
$\lambda_{n}(s)=s_{n}$, and affine on $[0,s]$ and $[s,1]$, then $\sup
_{[0,1]}\left\vert \lambda_{n}-\lambda_{id}\right\vert \rightarrow0$ as
$n\rightarrow\infty$. Define time-changed processes $\widetilde{\mathsf{S}%
}^{[n]}=(\widetilde{\mathsf{S}}_{t}^{[n]})_{t\in\lbrack0,1]}$ by
$\widetilde{\mathsf{S}}_{t}^{[n]}:=\widehat{\mathsf{S}}_{\lambda_{n}(t)}%
^{[n]}$, and note that
\begin{equation}
d_{\mathcal{J}_{1}}(\widehat{\mathsf{S}}^{[n]},\widetilde{\mathsf{S}}%
^{[n]})\leq\sup\nolimits_{\lbrack0,1]}\left\vert \lambda_{n}-\lambda
_{id}\right\vert \text{\quad on }\Omega\text{ \quad for }n\geq1\text{.}%
\end{equation}
(For any $n\geq1$ and $\omega\in\Omega$, the paths $\mathsf{x}=(\mathsf{x}%
_{t})=(\widehat{\mathsf{S}}_{t}^{[n]}(\omega))$ and $\mathsf{y}=(\mathsf{y}%
_{t})=(\widetilde{\mathsf{S}}_{t}^{[n]}(\omega))$ are related by
$\mathsf{y}=\mathsf{x}\circ\lambda_{n}$. Now recall the definition of
$d_{\mathcal{J}_{1}}$.) This shows that $d_{\mathcal{J}_{1}}(\widehat
{\mathsf{S}}^{[n]},\widetilde{\mathsf{S}}^{[n]})\longrightarrow0$ uniformly on
$\Omega$ as $n\rightarrow\infty$, and therefore convergence of $(\widehat
{\mathsf{S}}^{[n]})$ entails $\widetilde{\mathsf{S}}^{[n]}\Longrightarrow
\widehat{\mathsf{S}}$. In particular,
\begin{equation}
V_{n}+\frac{1}{B_{n}}\left(  A_{s_{n}n}^{(i)}-\frac{\left\lfloor
s_{n}n\right\rfloor }{n}A_{n}^{(i)}\right)  =\widetilde{\mathsf{S}}_{s}%
^{[n]}\Longrightarrow\widehat{\mathsf{S}}_{s}\overset{d}{=}s^{1/\alpha
}\widehat{\mathsf{S}}_{1}+\mathsf{a}_{s}^{(i)}\text{ \quad as }n\rightarrow
\infty\text{.}\label{Eq_SecondObs}%
\end{equation}
But (\ref{Eq_FirstObs}) and (\ref{Eq_SecondObs}), together with continuity of
$t\mapsto\mathsf{a}_{t}^{(i)}$, give (\ref{Eq_TheGoalNow}).
\end{proof}%

\vspace{0.2cm}%
%

\noindent
\textbf{Tightness in }$(\mathcal{D}([0,1],\mathbb{R}^{d}),\mathcal{J}_{1}%
)$\textbf{.} We can now tackle the crucial tightness condition.

\begin{proposition}
[\textbf{Tightness}]\label{P_Tightness}Under the assumptions of Theorem
\ref{T_SLTgivesFSLT}, the sequence $(\mathsf{S}^{[n]})_{n\geq1}$ on
$(X,\mathcal{A},\mu)$ is tight for the (strong) $\mathcal{J}_{1}$-topology on
$\mathcal{D}([0,1],\mathbb{R}^{d})$.
\end{proposition}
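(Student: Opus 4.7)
The plan is to verify the standard Billingsley tightness criterion for $(\mathcal{D}([0,1],\mathbb{R}^{d}),\mathcal{J}_{1})$, namely compact containment $\lim_{K\to\infty}\limsup_{n}\mu(\sup_{t\in[0,1]}\|\mathsf{S}_{t}^{[n]}\|>K)=0$, together with the $w''$-modulus condition $\lim_{\delta\to 0}\limsup_{n}\mu(w''(\mathsf{S}^{[n]},\delta)\geq\varepsilon)=0$ for every $\varepsilon>0$, where $w''(\mathsf{x},\delta):=\sup\{\|\mathsf{x}_{t}-\mathsf{x}_{t_{1}}\|\wedge\|\mathsf{x}_{t_{2}}-\mathsf{x}_{t}\|:0\leq t_{1}\leq t\leq t_{2}\leq 1,\ t_{2}-t_{1}\leq\delta\}$. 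Both conditions are set up to be derived from Lemma~\ref{L_MaxIneq1} applied to the centered observable $g_{n}:=f-A_{n}/n$, for which $\mathbf{S}_{k}(g_{n})=\mathbf{S}_{k}(f)-(k/n)A_{n}=B_{n}\mathsf{S}_{k/n}^{[n]}$ and $\vartheta_{g_{n}}=\vartheta_{f}$.

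For compact containment I would apply the maximal inequality (\ref{Eq_MI1_a}) with $\kappa=KB_{n}$. The second summand $n\max_{k}\mu(\vartheta_{f,k}/B_{n}>K/4)$ is $O(K^{-\alpha})$ by Lemma~\ref{L_ThetaFTail}, while for the first summand the decomposition
\[
\frac{\mathbf{S}_{k}(g_{n})}{B_{n}}=\frac{B_{k}}{B_{n}}\cdot\frac{\mathbf{S}_{k}(f)-A_{k}}{B_{k}}+\frac{A_{k}-(k/n)A_{n}}{B_{n}}
\]
reduces matters to three known pieces: regular variation of $(B_{n})$ from Proposition~\ref{Prop_GMvsIID} gives $B_{k}/B_{n}\leq(k/n)^{1/\alpha}(1+o(1))$, hence bounded for $k\leq n$; uniform tightness of the family $\{(\mathbf{S}_{k}(f)-A_{k})/B_{k}:k\geq 1\}$ follows from the SLT for large $k$ and finiteness for small $k$; and $\|A_{k}-(k/n)A_{n}\|/B_{n}\leq\sup_{t\in[0,1]}\|\mathsf{a}_{t}\|+o(1)$ uniformly in $k$ by Lemma~\ref{L_TheAn}. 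Taking $K$ large then makes the first summand arbitrarily small uniformly in $n$.

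For the modulus condition I would cover $[0,1]$ by the $\lceil 1/\delta\rceil+1$ overlapping windows $I_{r}:=[r\delta,(r+2)\delta]\cap[0,1]$, so that every triple $t_{1}\leq t\leq t_{2}$ with $t_{2}-t_{1}\leq\delta$ lies in some $I_{r}$. Using the cocycle identity $\mathbf{S}_{j}(f)-\mathbf{S}_{i}(f)=\mathbf{S}_{j-i}(f)\circ T^{i}$ and $T$-invariance of $\mu$, the contribution of each window can be identified with the anchored-at-zero quantity
\[
\mu\Bigl(\max_{1\leq i<j<l\leq N}\min(\|\mathbf{S}_{j}(g_{n})-\mathbf{S}_{i}(g_{n})\|,\|\mathbf{S}_{l}(g_{n})-\mathbf{S}_{j}(g_{n})\|)>\varepsilon B_{n}\Bigr),\qquad N:=\lfloor 2\delta n\rfloor,
\]
which is bounded by $(e^{R}/\flat)\,P(P+Q)$ by (\ref{Eq_MI1_ccc}), where $P:=\mu(\max_{k\leq N}\|\mathbf{S}_{k}(g_{n})\|>\varepsilon B_{n}/4)$ and $Q:=N\max_{k}\mu(\vartheta_{f,k}>\varepsilon B_{n}/4)$. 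Repeating the containment estimate but with horizon $N$ rather than $n$ gives $P=O(\delta)$, because now the scaling factor $B_{k}/B_{n}\leq(2\delta)^{1/\alpha}(1+o(1))$ becomes small and the $\alpha$-stable tail bound $\mu(\|(\mathbf{S}_{k}(f)-A_{k})/B_{k}\|>(2\delta)^{-1/\alpha}\lambda)=O(\delta\lambda^{-\alpha})$ converts this smallness into a $\delta$-factor; similarly $Q=O(\delta)$ by Lemma~\ref{L_ThetaFTail}. Summing the resulting $O(\delta^{2})$ bound per window over the $O(1/\delta)$ windows then delivers $O(\delta)\to 0$.

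I expect the modulus step to be the main obstacle, for two reasons. First, one has to engineer the block decomposition carefully so that $T$-invariance reduces every window to the anchored-at-zero case without boundary losses; the cocycle identity achieves this, provided one tracks $\lfloor\cdot\rfloor$-errors in the index translations in the spirit of part~(ii) of the proof of Proposition~\ref{P_Finidim}. Second, the summation over $O(1/\delta)$ windows would be fatal if each window contributed only $P=O(\delta)$; it is precisely the quadratic product structure $P(P+Q)$ in (\ref{Eq_MI1_ccc})---a genuinely two-sided bound for three-point oscillations---that delivers the $O(\delta^{2})$ per window needed to survive the $O(1/\delta)$-sum. Verifying that the anchored bound $P=O(\delta)$ is uniform in $k\leq N$ and valid for all sufficiently large $n$ is the main quantitative hurdle, requiring regular variation of $B_{n}$, Lemma~\ref{L_TheAn}, and the classical stable-tail asymptotics to line up simultaneously.
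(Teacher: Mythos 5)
Your argument is, in substance, the paper's own: the same maximal inequalities from Lemma \ref{L_MaxIneq1} applied to the recentered observable $f-A_n/n$, the same covering of $[0,1]$ by $O(1/\delta)$ overlapping windows of length $O(\delta)$ reduced to the anchored-at-zero case via the cocycle identity and $T$-invariance of $\mu$, the same use of the quadratic bound (\ref{Eq_MI1_ccc}) to get $O(\delta^{2})$ per window and hence $O(\delta)$ after summation, and the same inputs (Lemma \ref{L_ThetaFTail} for the $\vartheta_{f,k}$-terms, Lemma \ref{L_TheAn} for the drift, regular variation of $(B_n)$ through a monotone equivalent, and the stable tail bound from the SLT) to prove the key window estimate $P=O(\delta)$, which is exactly the paper's display (\ref{Eq_SupiDupi}). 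You also correctly identify the product structure of (\ref{Eq_MI1_ccc}) as the reason the argument survives the sum over $O(1/\delta)$ windows.

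There is, however, one genuine gap: the tightness criterion you invoke is incomplete. For the $\mathcal{J}_{1}$-topology on $\mathcal{D}([0,1],\mathbb{R}^{d})$, compact containment together with the $w''$-modulus condition does \emph{not} imply tightness: $w''$ is blind to a single jump, so it cannot prevent mass from escaping through a jump drifting to $t=0$ or $t=1$ (for instance $\mathsf{x}^{[n]}:=1_{[1/n,1]}$ satisfies both of your conditions uniformly, yet has no $\mathcal{J}_{1}$-convergent subsequence, since any limit would have to be constant $1$ while $\mathsf{x}^{[n]}_{0}=0$). The correct criterion (Billingsley, Theorem 13.3, or the Skorohod conditions 2.7.2--2.7.3 used in the paper) additionally requires $\lim_{\delta\searrow0}\overline{\lim}_{n}\,\mu(\sup_{0\le t\le\delta}\Vert\mathsf{S}^{[n]}_{t}-\mathsf{S}^{[n]}_{0}\Vert>\varepsilon)=0$ and the mirror condition on $[1-\delta,1]$; these are precisely the paper's conditions $(\heartsuit_{1})$ and $(\heartsuit_{2})$. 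Your compact-containment bound is a large-$K$ statement and cannot substitute for these small-$\delta$ statements. The repair is immediate from your own machinery --- the missing estimates are the single-window, horizon-$\lfloor\delta n\rfloor$ instance of your bound $P=O(\delta)$, anchored at $0$ for the left endpoint and, for the right endpoint, shifted by $T^{\left\lfloor(1-\delta)n\right\rfloor}$ and combined with (\ref{Eq_MI1_b}) --- but as written the proof does not close.
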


\begin{proof}
\textbf{(i)} We check that the distributions under $\mu$ of the $\mathsf{S}%
^{[n]}$ are uniformly tight as a sequence of Borel probability measures on
$\mathcal{D}([0,1],\mathbb{R}^{d})$ equipped with the (strong) $\mathcal{J}%
_{1}$-topology. For $\mathsf{x}\in\mathcal{D}([0,1],\mathbb{R}^{d})$ and
$\delta\in(0,1)$ define
\[
\Delta_{\delta}^{(1)}(\mathsf{x}):=\sup_{0\leq t\leq\delta}\left\Vert
\mathsf{x}_{t}-\mathsf{x}_{0}\right\Vert \text{ \quad and \quad}\Delta
_{\delta}^{(2)}(\mathsf{x}):=\sup_{1-\delta\leq t\leq1}\left\Vert
\mathsf{x}_{1}-\mathsf{x}_{t}\right\Vert \text{,}%
\]
while
\[
\Delta_{\delta}^{(3)}(\mathsf{x}):=\sup_{0\vee(t-\delta)\leq t^{\prime
}<t<t^{\prime\prime}\leq(t+\delta)\wedge1}(\left\Vert \mathsf{x}%
_{t}-\mathsf{x}_{t^{\prime}}\right\Vert \wedge\left\Vert \mathsf{x}%
_{t^{\prime\prime}}-\mathsf{x}_{t}\right\Vert )\text{.}%
\]
A set $\mathcal{K}\subseteq\mathcal{D}([0,1],\mathbb{R}^{d})$ with
$\mathsf{x}_{0}=0$ for all $\mathsf{x}\in\mathcal{K}$ is relatively compact in
the $\mathcal{J}_{1}$-topology iff for every $j\in\{1,2,3\}$,
\begin{equation}
\lim_{\delta\searrow0}\,\sup_{\mathsf{x}\in\mathcal{K}}\,\Delta_{\delta}%
^{(j)}(\mathsf{x})=0\text{,}%
\end{equation}
see statements 2.7.2 and 2.7.3 of \cite{Sk}. As a consequence, uniform
tightness of $(\mathsf{S}^{[n]})_{n\geq1}$ in $(\mathcal{D}([0,1],\mathbb{R}%
^{d}),\mathcal{J}_{1})$ can be verified by showing that for every
$j\in\{1,2,3\}$,
\begin{equation}
\lim_{\delta\searrow0}\,\underset{n\rightarrow\infty}{\overline{\lim}}%
\,\mu\left(  \Delta_{\delta}^{(j)}(\mathsf{S}^{[n]})>\varepsilon\right)
=0\text{ \quad for }\varepsilon>0\text{.}\label{Eq_tightnessCrit}%
\end{equation}
\textbf{(ii)} To efficiently deal with the $A_{n}$ we define new observables
$f_{n}:X\rightarrow\mathbb{R}^{d}$ with $f_{n}:=f-A_{n}/n$, $n\geq1$, so that
$\mathbf{S}_{k}(f_{n})=\mathbf{S}_{k}(f)-(k/n)A_{n}$ whenever $k,n\geq1 $.
Consequently,
\begin{equation}
\mathsf{S}_{t}^{[n]}=\frac{1}{B_{n}}\,\mathbf{S}_{\left\lfloor tn\right\rfloor
}(f_{n})\text{ \quad for }n\geq1\text{ and }t\in\lbrack0,1]\text{,}%
\label{Eq_hdvhdhvhhdshjsdhfdhhvcsdvsggggggg}%
\end{equation}
whereas
\begin{equation}
\mathsf{S}_{t}^{[n]}-\mathsf{S}_{t^{\prime}}^{[n]}=\frac{1}{B_{n}}%
\,\mathbf{S}_{\left\lfloor tn\right\rfloor -\left\lfloor t^{\prime
}n\right\rfloor }(f_{n})\circ T^{\left\lfloor t^{\prime}n\right\rfloor }\text{
\quad for }n\geq1\text{ and }0\leq t^{\prime}\leq t\leq1\text{.}%
\end{equation}
For later use, pick a constant $C>0$ such that $\Pr\left[  \left\Vert
S\right\Vert >t\right]  \leq$ $C\cdot t^{-\alpha}$ for $t\geq1/24$%
.\newline\newline\textbf{(iii)} We first establish (\ref{Eq_tightnessCrit})
for $j=1$. To this end, we fix $\varepsilon>0$ and observe that
(\ref{Eq_hdvhdhvhhdshjsdhfdhhvcsdvsggggggg}) yields $\Delta_{\delta}%
^{(1)}(\mathsf{S}^{[n]})=\max_{1\leq k\leq\delta n}\left\Vert B_{n}%
^{-1}\mathbf{S}_{k}(f_{n})\right\Vert $, which allows us to apply
(\ref{Eq_MI1_a}) of Lemma \ref{L_MaxIneq1} to $g:=f_{n}$. This gives
\begin{align}
\mu\left(  \Delta_{\delta}^{(1)}(\mathsf{S}^{[n]})>\varepsilon\right)   &
\leq\frac{2e^{R}}{a}\,\max_{1\leq k\leq\delta n}\mu\left(  \left\Vert
\frac{\mathbf{S}_{k}(f_{n})}{B_{n}}\right\Vert >\frac{\varepsilon}{4}\right)
\nonumber\\
+  & \delta n\max_{1\leq k\leq\delta n}\mu\left(  \frac{\vartheta_{f_{n},k}%
}{B_{n}}>\frac{\varepsilon}{4}\right)  \text{ \quad for }\delta\in(0,1)\text{
and }n\geq1\text{.}\label{Eq_uturbavwqqq}%
\end{align}
Consider the first expression on the right-hand side of (\ref{Eq_uturbavwqqq}%
). Since $t\mapsto\mathsf{a}_{t}$ is continuous, we can find $\delta^{\prime
}>0$ such that $\left\Vert \mathsf{a}_{k/n}\right\Vert <\varepsilon/16$
whenever $1\leq k\leq\delta^{\prime}n$. By Lemma \ref{L_TheAn}, there is some
$n^{\ast}=n^{\ast}(\varepsilon)$ s.t. $\max_{1\leq k\leq n}\left\Vert
B_{n}^{-1}(A_{k}-\frac{k}{n}A_{n})-\mathsf{a}_{k/n}\right\Vert <\varepsilon
/16$ for $n\geq n^{\ast}$. Now
\begin{equation}
\frac{1}{B_{n}}\mathbf{S}_{k}(f_{n})=\frac{1}{B_{n}}(\mathbf{S}_{k}%
(f)-A_{k})+\mathsf{a}_{\frac{k}{n}}+\frac{1}{B_{n}}\left(  A_{k}-\frac{k}%
{n}A_{n}\right)  -\mathsf{a}_{\frac{k}{n}}\text{,}\label{Eq_qqqqqqqaqqqaw}%
\end{equation}
so that
\begin{equation}
\left\{  \left\Vert \frac{\mathbf{S}_{k}(f_{n})}{B_{n}}\right\Vert
>\frac{\varepsilon}{4}\right\}  \subseteq\left\{  \left\Vert \frac{1}{B_{n}%
}(\mathbf{S}_{k}(f)-A_{k})\right\Vert >\frac{\varepsilon}{8}\right\}  \text{
\quad}%
\begin{array}
[c]{c}%
\text{for }n\geq n^{\ast}\text{ and}\\
1\leq k\leq\delta^{\prime}n\text{.}%
\end{array}
\label{Eq_mnnbbvcxyasdfdgg}%
\end{equation}
Due to (\ref{Eq_BasicAssmThmImpli}), we see that for every $\delta
\in(0,\varepsilon^{\alpha})$ there is some $k^{\ast}(\delta)$ such that
\begin{align}
\mu\left(  \left\Vert \frac{1}{B_{k}}(\mathbf{S}_{k}(f)-A_{k})\right\Vert
>\frac{\varepsilon}{24}\delta^{-\frac{1}{\alpha}}\right)   & \leq\Pr\left[
\left\Vert S\right\Vert >\frac{\varepsilon}{24}\delta^{-\frac{1}{\alpha}%
}\right]  +\delta\nonumber\\
& \leq\left[  C\left(  \frac{\varepsilon}{24}\right)  ^{-\alpha}+1\right]
\,\delta\text{ \quad for }k\geq k^{\ast}(\delta)\text{.}%
\label{Eq_sdygvchsadgvchg}%
\end{align}
Since $(B_{n})\in\mathcal{R}_{1/\alpha}$, there are $k_{\ast}$ and, for each
$\delta\in(0,\varepsilon^{\alpha})$, some $n_{\ast}(\delta)$ for which
\begin{equation}
\frac{B_{n}}{B_{k}}\geq\frac{1}{4}\delta^{-\frac{1}{\alpha}}\text{ \quad for
}n\geq n_{\ast}(\delta)\text{ and }k_{\ast}\leq k\leq\delta n\text{.}%
\label{Eq_jfjsdhsahsadhjsdah}%
\end{equation}
(Indeed, There is some non-decreasing sequence $(B_{n}^{\ast})\in
\mathcal{R}_{1/\alpha}$ for which $B_{n}^{\ast}\sim B_{n}$ as $n\rightarrow
\infty$, see Theorem 1.5.3 of \cite{BGT}. Choose $k_{\ast}$ for which
$B_{k}^{\ast}/B_{k}\in(1/\sqrt{2},\sqrt{2})$ whenever $k\geq k_{\ast}$. Given
$\delta>0$ there is some $n_{\ast}(\delta)$ such that $B_{n}^{\ast}/B_{\delta
n}^{\ast}\geq\delta^{-\frac{1}{\alpha}}/2$ for $n\geq n_{\ast}(\delta)$. Now
(\ref{Eq_jfjsdhsahsadhjsdah}) follows if we write $B_{n}/B_{k}=(B_{n}%
/B_{n}^{\ast})(B_{k}^{\ast}/B_{k})(B_{n}^{\ast}/B_{k}^{\ast})$ and use
$B_{n}^{\ast}/B_{k}^{\ast}\geq B_{n}^{\ast}/B_{\delta n}^{\ast}$ for
$k\leq\delta n$.) Combining (\ref{Eq_sdygvchsadgvchg}) and
(\ref{Eq_jfjsdhsahsadhjsdah}) we find that
\begin{align}
\mu\left(  \left\Vert \frac{1}{B_{n}}(\mathbf{S}_{k}(f)-A_{k})\right\Vert
>\frac{\varepsilon}{8}\right)   & =\mu\left(  \left\Vert \frac{1}{B_{k}%
}(\mathbf{S}_{k}(f)-A_{k})\right\Vert >\frac{\varepsilon}{8}\frac{B_{n}}%
{B_{k}}\right) \nonumber\\
& \leq\left[  C\left(  \frac{\varepsilon}{24}\right)  ^{-\alpha}+1\right]
\,\delta\text{ \quad}%
\begin{array}
[c]{c}%
\text{for }n\geq n^{\ast}\vee n_{\ast}(\delta)\text{ and}\\
k\geq k_{\ast}\vee k^{\ast}(\delta)\text{.}%
\end{array}
\label{Eq_usususushwebw}%
\end{align}
Recalling (\ref{Eq_mnnbbvcxyasdfdgg}) we thus obtain, for all $\delta
\in(0,\delta^{\prime}\wedge\varepsilon^{\alpha})$,
\begin{equation}
\underset{n\rightarrow\infty}{\overline{\lim}}\,\max_{k_{\ast}\vee k^{\ast
}(\delta)\leq k\leq\delta n}\mu\left(  \left\Vert \frac{\mathbf{S}_{k}(f_{n}%
)}{B_{n}}\right\Vert >\frac{\varepsilon}{4}\right)  \leq\left[  C\left(
\frac{\varepsilon}{24}\right)  ^{-\alpha}+1\right]  \,\delta\text{.}%
\label{Eq_bvxbvxnbvxx}%
\end{equation}
On the other hand, (\ref{Eq_mnnbbvcxyasdfdgg}) shows that for every given
$k\geq1$,
\begin{equation}
\left\Vert \frac{\mathbf{S}_{k}(f_{n})}{B_{n}}\right\Vert \overset{\mu
}{\longrightarrow}0\text{ \quad as }n\rightarrow\infty\text{.}%
\end{equation}
Therefore, whatever $\delta\in(0,\delta^{\prime}\wedge\varepsilon^{\alpha})$,
we have
\[
\underset{n\rightarrow\infty}{\overline{\lim}}\,\max_{1\leq k<k_{\ast}\vee
k^{\ast}(\delta)}\mu\left(  \left\Vert \frac{\mathbf{S}_{k}(f_{n})}{B_{n}%
}\right\Vert >\frac{\varepsilon}{4}\right)  =0\text{.}%
\]
Together with (\ref{Eq_bvxbvxnbvxx}) this yields, for every $\delta
\in(0,\delta^{\prime}\wedge\varepsilon^{\alpha})$,
\begin{equation}
\underset{n\rightarrow\infty}{\overline{\lim}}\,\max_{1\leq k\leq\delta n}%
\mu\left(  \left\Vert \frac{\mathbf{S}_{k}(f_{n})}{B_{n}}\right\Vert
>\frac{\varepsilon}{4}\right)  \leq\left[  C\left(  \frac{\varepsilon}%
{24}\right)  ^{-\alpha}+1\right]  \,\delta=:C_{\ast}(\varepsilon
)\,\delta\text{.}\label{Eq_SupiDupi}%
\end{equation}
Turning to the second term on the right-hand side of (\ref{Eq_uturbavwqqq}),
observe first that trivially $\vartheta_{f_{n}}=\vartheta_{f}$ for every
$n\geq1$, and hence $\vartheta_{f_{n},k}=\vartheta_{f,k}$ for all $k,n\geq1$.
But then Lemma \ref{L_ThetaFTail} immediately shows that
\begin{equation}
\underset{n\rightarrow\infty}{\overline{\lim}}\,\delta n\max_{1\leq
k\leq\delta n}\mu\left(  \frac{\vartheta_{f_{n},k}}{B_{n}}>\frac{\varepsilon
}{4}\right)  \leq\zeta_{f}\,\left(  \frac{\varepsilon}{4}\right)  ^{-\alpha
}\delta=:C^{\ast}(\varepsilon)\,\delta\text{.}%
\label{Eq_fhesvbjsvbjhsvbhjbsjhb}%
\end{equation}
When combined with (\ref{Eq_SupiDupi}) this implies (\ref{Eq_tightnessCrit})
for $j=1$.\newline\newline\textbf{(iv)} To deal with (\ref{Eq_tightnessCrit})
for $j=2$, write
\begin{align*}
\Delta_{\delta}^{(2)}(\mathsf{S}^{[n]})  & =\max_{\left\lfloor (1-\delta
)n\right\rfloor \leq j\leq n}\left\Vert \frac{1}{B_{n}}(\mathbf{S}_{n}%
(f_{n})-\mathbf{S}_{j}(f_{n}))\right\Vert \\
& =\left(  \max_{0\leq k\leq n-\left\lfloor (1-\delta)n\right\rfloor
}\left\Vert \frac{1}{B_{n}}(\mathbf{S}_{n-\left\lfloor (1-\delta
)n\right\rfloor }(f_{n})-\mathbf{S}_{k}(f_{n}))\right\Vert \right)  \circ
T^{\left\lfloor (1-\delta)n\right\rfloor }\text{.}%
\end{align*}
Using $T$-invariance of $\mu$, and the inequalities (\ref{Eq_MI1_b}) and
(\ref{Eq_MI1_a}) (again for $g:=f_{n}$), we then find that
\begin{align*}
\mu\left(  \Delta_{\delta}^{(2)}(\mathsf{S}^{[n]})>\varepsilon\right)   &
=\mu\left(  \max_{0\leq k\leq n-\left\lfloor (1-\delta)n\right\rfloor
}\left\Vert \frac{1}{B_{n}}(\mathbf{S}_{n-\left\lfloor (1-\delta
)n\right\rfloor }(f_{n})-\mathbf{S}_{k}(f_{n}))\right\Vert >\varepsilon\right)
\\
& \leq\mu\left(  \max_{0\leq k\leq n-\left\lfloor (1-\delta)n\right\rfloor
}\left\Vert \frac{\mathbf{S}_{k}(f_{n})}{B_{n}}\right\Vert >\frac{\varepsilon
}{2}\right) \\
& \leq\frac{2e^{R}}{a}\,\max_{0\leq k\leq n-\left\lfloor (1-\delta
)n\right\rfloor }\mu\left(  \left\Vert \frac{\mathbf{S}_{k}(f_{n})}{B_{n}%
}\right\Vert >\frac{\varepsilon}{8}\right) \\
& +(n-\left\lfloor (1-\delta)n\right\rfloor )\max_{0\leq k\leq n-\left\lfloor
(1-\delta)n\right\rfloor }\mu\left(  \frac{\vartheta_{f_{n},k}}{B_{n}}%
>\frac{\varepsilon}{8}\right)  \text{.}%
\end{align*}
Since $n-\left\lfloor (1-\delta)n\right\rfloor \sim\delta n$ as $n\rightarrow
\infty$, assertion (\ref{Eq_tightnessCrit}) for $j=2$ follows from
(\ref{Eq_SupiDupi}) and (\ref{Eq_fhesvbjsvbjhsvbhjbsjhb}).\newline%
\newline\textbf{(v)} We finally turn to (\ref{Eq_tightnessCrit}) for $j=3$.
For any $\delta\in(0,1)$, if $n\geq1/\delta$ then $\left\lfloor 2\delta
n\right\rfloor \geq\delta n$, and thus for any triple $t^{\prime}%
<t<t^{\prime\prime}$ as in the definition of $\Delta_{\delta}^{(3)}$, the
points $\left\lfloor t^{\prime}n\right\rfloor \leq\left\lfloor tn\right\rfloor
\leq\left\lfloor t^{\prime\prime}n\right\rfloor $ are contained in an interval
of the form $[k\left\lfloor 2\delta n\right\rfloor ,(k+3)\left\lfloor 2\delta
n\right\rfloor ]$. Consequently, $\Delta_{\delta}^{(3)}(\mathsf{S}^{[n]})$
cannot exceed
\[
\max_{0\leq k\leq n/\left\lfloor 2\delta n\right\rfloor }\left(  \max_{0\leq
i<j<l\leq4\left\lfloor 2\delta n\right\rfloor }\left(  \frac{\left\Vert
\mathbf{S}_{j}(f_{n})-\mathbf{S}_{i}(f_{n})\right\Vert }{B_{n}}\wedge
\frac{\left\Vert \mathbf{S}_{l}(f_{n})-\mathbf{S}_{j}(f_{n})\right\Vert
}{B_{n}}\right)  \right)  \circ T^{k\left\lfloor 2\delta n\right\rfloor
}\text{.}%
\]
Therefore, $\{\Delta_{\delta}^{(3)}(\mathsf{S}^{[n]})>\varepsilon\}$ is
contained in
\[
\bigcup_{0\leq k\leq n/\left\lfloor 2\delta n\right\rfloor }T^{-k\left\lfloor
2\delta n\right\rfloor }\left\{  \max_{0\leq i<j<l\leq4\left\lfloor 2\delta
n\right\rfloor }\left(  \frac{\left\Vert \mathbf{S}_{j}(f_{n})-\mathbf{S}%
_{i}(f_{n})\right\Vert }{B_{n}}\wedge\frac{\left\Vert \mathbf{S}_{l}%
(f_{n})-\mathbf{S}_{j}(f_{n})\right\Vert }{B_{n}}\right)  >\varepsilon
\right\}  \text{,}%
\]
and due to $T$-invariance of $\mu$, we find that for $\delta\in(0,1/2)$ and
$n$ so large that $1+n/\left\lfloor 2\delta n\right\rfloor \leq1/\delta$,
\[
\mu\left(  \Delta_{\delta}^{(3)}(\mathsf{S}^{[n]})>\varepsilon\right)
\leq\frac{1}{\delta}\,\mu\left(  \max_{0\leq i<j<l\leq4\left\lfloor 2\delta
n\right\rfloor }\left(  \frac{\left\Vert \mathbf{S}_{j}(f_{n})-\mathbf{S}%
_{i}(f_{n})\right\Vert }{B_{n}}\wedge\frac{\left\Vert \mathbf{S}_{l}%
(f_{n})-\mathbf{S}_{j}(f_{n})\right\Vert }{B_{n}}\right)  >\varepsilon\right)
\text{.}%
\]
Hence, applying (\ref{Eq_MI1_ccc}) to $g:=f_{n}$,
\begin{align*}
\mu\left(  \Delta_{\delta}^{(3)}(\mathsf{S}^{[n]})>\varepsilon\right)   &
\leq\frac{e^{R}}{\delta a}\mu\left(  \max_{1\leq k\leq8\delta n}\left\Vert
\frac{\mathbf{S}_{k}(f_{n})}{B_{n}}\right\Vert >\frac{\varepsilon}{4}\right)
\,\\
& \cdot\left[  \mu\left(  \max_{1\leq k\leq8\delta n}\left\Vert \frac
{\mathbf{S}_{k}(f_{n})}{B_{n}}\right\Vert >\frac{\varepsilon}{4}\right)
+8\delta n\max_{1\leq k\leq8\delta n}\mu\left(  \frac{\vartheta_{f_{n},k}%
}{B_{n}}>\frac{\varepsilon}{4}\right)  \right]  \text{.}%
\end{align*}
In view of (\ref{Eq_SupiDupi}) and (\ref{Eq_fhesvbjsvbjhsvbhjbsjhb}) this
shows that for every $\delta\in(0,\varepsilon^{\alpha}\wedge\frac{1}{2}) $,%
\[
\underset{n\rightarrow\infty}{\overline{\lim}}\,\mu\left(  \Delta_{\delta
}^{(3)}(\mathsf{S}^{[n]})>\varepsilon\right)  \leq\frac{e^{R}}{a}C_{\ast
}(\varepsilon)[C_{\ast}(\varepsilon)+8C^{\ast}(\varepsilon)]\,\delta\text{,}%
\]
and (\ref{Eq_tightnessCrit}) with $j=3$ follows as $\delta\searrow0$.
\end{proof}%

\vspace{0.2cm}%

We finally provide a $d$-dimensional version of Corollary 3 of \cite{Z7}.

\begin{lemma}
[{\textbf{Strong distributional convergence of }$\mathsf{S}^{[n]}
$\textbf{\ in} $(\mathcal{D}([0,\infty),\mathbb{R}^{d}),\mathcal{J}_{1})$}%
]\label{L_StrongDistCge}Let $T$ be an ergodic measure preserving map on the
probability space $(X,\mathcal{A},\mu)$, $f:X\rightarrow\mathbb{R}^{d}$ a
measurable function, and $\mathsf{S}^{[n]}$ defined as in
(\ref{Eq_DefPSprocesses2}) with $(A_{n},B_{n})\in\mathbb{R}^{d}\times
(0,\infty) $ satisfying $B_{n}\rightarrow\infty$ and $\left\Vert
A_{n}\right\Vert =o(nB_{n})$ as $n\rightarrow\infty$. Then
\begin{equation}
d_{\mathcal{J}_{1},\infty}(\mathsf{S}^{[n]},\mathsf{S}^{[n]}\circ
T)\overset{\mu}{\longrightarrow}0\text{ \quad as }n\rightarrow\infty
\text{.}\label{Eq_bvbvbvbvbvbvbfshbfuserb}%
\end{equation}
Consequently, whenever $\mathsf{S}$ is a random element of $(\mathcal{D}%
([0,\infty),\mathbb{R}^{d}),\mathcal{J}_{1})$, then
\begin{equation}
\mathsf{S}^{[n]}\overset{\nu}{\Longrightarrow}\mathsf{S}\text{ \quad in
}(\mathcal{D}([0,\infty),\mathbb{R}^{d}),\mathcal{J}_{1})
\end{equation}
holds for \emph{all} probabilities $\nu\ll\mu$ as soon as it holds for
\emph{some} $\nu$.
\end{lemma}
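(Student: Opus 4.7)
The plan is to verify the asymptotic $T$-invariance \eqref{Eq_bvbvbvbvbvbvbfshbfuserb} by a direct time-change estimate, after which the strong distributional convergence follows from a verbatim adaptation of Corollary 3 of \cite{Z7} to the $d$-dimensional Skorohod space $(\mathcal{D}([0,\infty),\mathbb{R}^{d}),d_{\mathcal{J}_{1},\infty})$; the argument given there uses nothing specific to the one-dimensional target.

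The algebraic heart of the matter is the pointwise identity
\[
\mathsf{S}_{t}^{[n]}(Tx)=\mathsf{S}_{t+1/n}^{[n]}(x)-r_{n}(x),\qquad r_{n}(x):=\frac{1}{B_{n}}\Big(f(x)-\frac{A_{n}}{n}\Big),
\]
valid for every $t\geq 0$, which follows from $\mathbf{S}_{k}(f)\circ T=\mathbf{S}_{k+1}(f)-f$ together with $\lfloor tn\rfloor+1=\lfloor(t+1/n)n\rfloor$. The defect $r_{n}$ depends on $x$ alone and tends to $0$ in $\mu$-measure, since $B_{n}\rightarrow\infty$ (making $\left\Vert f\right\Vert/B_{n}\overset{\mu}{\longrightarrow}0$) and $\left\Vert A_{n}\right\Vert/(nB_{n})\rightarrow 0$ by hypothesis. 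In short, $\mathsf{S}^{[n]}\circ T$ is merely a $1/n$-time-shift of $\mathsf{S}^{[n]}$, modulo a negligible additive constant.

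To turn this into a bound on $d_{\mathcal{J}_{1},s}$, I would, for each fixed $s>0$, introduce a piecewise linear self-homeomorphism $\lambda_{n}\colon[0,s]\to[0,s]$ through the vertices $(0,0)$, $(2/n,3/n)$, $(s-2/n,s-1/n)$, $(s,s)$, so that $\lambda_{n}(t)=t+1/n$ on the middle piece $[2/n,s-2/n]$ and $\sup_{[0,s]}|\lambda_{n}-\lambda_{id}|=1/n$. On that middle piece, $\left\Vert\mathsf{S}^{[n]}(\lambda_{n}(t))(x)-\mathsf{S}_{t}^{[n]}(Tx)\right\Vert=\left\Vert r_{n}(x)\right\Vert$ exactly; on each of the two boundary strips (of width $2/n$) the difference is controlled by $\left\Vert r_{n}(x)\right\Vert$ plus the oscillation of $\mathsf{S}^{[n]}(x)$ across $O(1)$ consecutive jumps, each of size at most $(\left\Vert f\circ T^{k}\right\Vert+\left\Vert A_{n}\right\Vert/n)/B_{n}\overset{\mu}{\longrightarrow}0$. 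Consequently $d_{\mathcal{J}_{1},s}(\mathsf{S}^{[n]},\mathsf{S}^{[n]}\circ T)\overset{\mu}{\longrightarrow}0$ for every $s>0$, and a Fubini-plus-dominated-convergence step (the integrand in $d_{\mathcal{J}_{1},\infty}$ being bounded by $e^{-s}$) lifts this to $L^{1}(\mu)$-convergence, hence $\mu$-measure convergence, of $d_{\mathcal{J}_{1},\infty}(\mathsf{S}^{[n]},\mathsf{S}^{[n]}\circ T)$ to $0$, which is precisely \eqref{Eq_bvbvbvbvbvbvbfshbfuserb}.

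The one slightly delicate point in this plan is the endpoint correction: because the natural translation $t\mapsto t+1/n$ is not a self-map of $[0,s]$, the homeomorphism $\lambda_{n}$ has to be distorted on strips of width $O(1/n)$ near $0$ and $s$. The price paid is a contribution from the handful of jumps of $\mathsf{S}^{[n]}$ falling into these strips, but, being a sum of finitely many terms that individually vanish in $\mu$-measure, it is absorbed without difficulty.
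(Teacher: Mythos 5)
Your proof is correct and follows essentially the same route as the paper's: both construct a piecewise-linear time change $\lambda_{n}$ that is the shift $t\mapsto t+1/n$ away from the endpoints, reduce $d_{\mathcal{J}_{1},s}(\mathsf{S}^{[n]},\mathsf{S}^{[n]}\circ T)$ to $1/n$ plus terms of order $\left\Vert f\circ T^{k}\right\Vert /B_{n}$ and $\left\Vert A_{n}\right\Vert /(nB_{n})$ for finitely many $k$, integrate against $e^{-s}$ to pass to $d_{\mathcal{J}_{1},\infty}$, and then invoke the abstract strong-distributional-convergence principle of \cite{Z7}. Your explicit identity $\mathsf{S}_{t}^{[n]}\circ T=\mathsf{S}_{t+1/n}^{[n]}-r_{n}$ is exactly the ``straightforward elementary consideration'' the paper leaves implicit, and your dominated-convergence step in $s$ is an acceptable substitute for the paper's uniform-in-$s$ bound.
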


\begin{proof}
\textbf{(i)} Below we shall show that for every $\varepsilon>0$ there are
constants $\varrho_{\varepsilon}(n)$ such that $\varrho_{\varepsilon
}(n)\rightarrow0$ as $n\rightarrow\infty$ while
\begin{equation}
\mu\left(  d_{\mathcal{J}_{1},s}(\mathsf{S}^{[n]},\mathsf{S}^{[n]}\circ
T)>\varepsilon\right)  \leq\varrho_{\varepsilon}(n)\text{ \quad for }%
n\geq1\text{ and }s\in\lbrack0,\infty)\text{.}\label{Eq_vsvsvsvsvsqqqqqqqqqq}%
\end{equation}
Therefore, whatever $\varepsilon>0$,
\begin{multline*}
\int_{0}^{\infty}\int_{X}e^{-s}(1\wedge d_{\mathcal{J}_{1},s}(\mathsf{S}%
^{[n]},\mathsf{S}^{[n]}\circ T))\,d\mu\,ds\\
\leq\int_{0}^{\infty}e^{-s}\left[  \mu\left(  d_{\mathcal{J}_{1},s}%
(\mathsf{S}^{[n]},\mathsf{S}^{[n]}\circ T)>\varepsilon\right)  +\varepsilon
\right]  \,ds\\
\leq\varrho_{\varepsilon}(n)+\varepsilon<2\varepsilon\text{ \quad for }n\geq
n_{1}(\varepsilon)\text{,}%
\end{multline*}
proving that $\int_{X}d_{\mathcal{J}_{1},\infty}(\mathsf{S}^{[n]}%
,\mathsf{S}^{[n]}\circ T)\,d\mu\rightarrow0$, which implies
(\ref{Eq_bvbvbvbvbvbvbfshbfuserb}). Now Theorem 1 of \cite{Z7} immediately
yields the second assertion of our lemma, about strong distributional
convergence of $(\mathsf{S}^{[n]})$.\newline\newline\textbf{(ii)}\ For
$n\geq4$ define a time-change $\lambda_{n}\in\Lambda$ by affinely
interpolating between $\lambda_{n}(0)=0$, $\lambda_{n}(1/n)=2/n $,
$\lambda_{n}((n-2)/n)=(n-1)/n$, and $\lambda_{n}(1)=1$. By straightforward
elementary considerations,
\begin{align*}
d_{\mathcal{J}_{1},1}(\mathsf{S}^{[n]},\mathsf{S}^{[n]}\circ T)  & \leq
\sup_{t\in\lbrack0,1]}\left\Vert \mathsf{S}_{\lambda_{n}(t)}^{[n]}%
-(\mathsf{S}^{[n]}\circ T)_{t}\right\Vert \\
& \leq\frac{2}{n}+\frac{2\left\Vert A_{n}\right\Vert }{nB_{n}}+\frac
{\left\Vert f\right\Vert +\left\Vert f\circ T^{n-1}\right\Vert +\left\Vert
f\circ T^{n}\right\Vert }{B_{n}}\text{.}%
\end{align*}
Taking $n_{0}(\varepsilon)$ so large that $n\geq n_{0}(\varepsilon)$ implies
$2(B_{n}+\left\Vert A_{n}\right\Vert )/(nB_{n})<\varepsilon/2$,
\begin{multline*}
\mu\left(  d_{\mathcal{J}_{1},1}(\mathsf{S}^{[n]},\mathsf{S}^{[n]}\circ
T)>\varepsilon\right)  \leq\mu\left(  \frac{\left\Vert f\right\Vert
+\left\Vert f\circ T^{n-1}\right\Vert +\left\Vert f\circ T^{n}\right\Vert
}{B_{n}}>\varepsilon B_{n}/2\right) \\
\leq\mu\left(  \frac{\left\Vert f\right\Vert }{B_{n}}>\frac{\varepsilon}%
{6}\right)  +\mu\left(  \frac{\left\Vert f\circ T^{n-1}\right\Vert }{B_{n}%
}>\frac{\varepsilon}{6}\right)  +\mu\left(  \frac{\left\Vert f\circ
T^{n}\right\Vert }{B_{n}}>\frac{\varepsilon}{6}\right) \\
\leq3\,\mu\left(  \frac{\left\Vert f\right\Vert }{B_{n}}>\frac{\varepsilon}%
{6}\right)  \text{ \quad for }n\geq n_{0}(\varepsilon)\text{.}%
\end{multline*}
But $\mu(\frac{\left\Vert f\right\Vert }{B_{n}}>\frac{\varepsilon}%
{6})\rightarrow0$ since $\left\Vert f\right\Vert /B_{n}\rightarrow0$ and $\mu$
is finite.

The same argument, \emph{with the same upper bound}, works if the time
interval $[0,1]$ is replaced by any $[0,s]$ with $s>0$. This proves
(\ref{Eq_vsvsvsvsvsqqqqqqqqqq}).
\end{proof}%

\vspace{0.2cm}%

We can now wrap up the

\begin{proof}
[\textbf{Proof of Theorem \ref{T_SLTgivesFSLT}}]According to Lemma
\ref{L_StrongDistCge} it suffices to prove $\mathsf{S}^{[n]}\overset{\nu
}{\Longrightarrow}\mathsf{S}$ in $(\mathcal{D}([0,\infty),\mathbb{R}%
^{d}),\mathcal{J}_{1})$ with respect to $\nu=\mu$, distributional convergence
$\mathsf{S}^{[n]}\overset{\nu}{\Longrightarrow}\mathsf{S}$ for arbitrary
$\nu\ll\mu$ then being automatic.

Due to stochastic continuity of $\mathsf{S}$ we see that $\mathsf{S}%
^{[n]}\overset{\mu}{\Longrightarrow}\mathsf{S}$ in $(\mathcal{D}%
([0,\infty),\mathbb{R}^{d}),\mathcal{J}_{1})$ follows as soon as
$\mathsf{S}^{[n]}\overset{\mu}{\Longrightarrow}\mathsf{S}$ in $(\mathcal{D}%
([0,s],\mathbb{R}^{d}),\mathcal{J}_{1})$ for every $s>0$. Now Propositions
\ref{P_Finidim} and \ref{P_Tightness} immediately show, via Prohorov's
Theorem, that $\mathsf{S}^{[n]}\overset{\mu}{\Longrightarrow}\mathsf{S}$ in
$(\mathcal{D}([0,1],\mathbb{R}^{d}),\mathcal{J}_{1})$, and the argument for
$s\neq1$ is the same.
\end{proof}%

\vspace{0.2cm}%

\section{Proof of the concrete limit theorems}

The one-dimensional case does not present any difficulties.

\begin{proof}
[\textbf{Proof of Theorem \ref{T_FSL_GM}}]The assumption that $f$ be in the
domain of attraction of $S$ and the condition
(\ref{Eq_ConditionRegularityTail}) on the tail of $\vartheta_{f}$ together
imply that
\begin{equation}
\int\vartheta_{f}^{\eta}\,d\mu<\infty\text{ \quad for }\eta\in(0,1\wedge
\alpha)\text{.}%
\end{equation}
Finiteness of $\int\vartheta_{f}^{\eta}\,d\mu$ for some $\eta\in(0,1]$,
however, is the fundamental regularity assumption of \cite{Goue2}. Theorem 1.5
of \cite{Goue2} asserts that if such an observable $f$ is in the domain of
attraction of $S$, then its ergodic sums satisfy a a stable limit theorem as
in (\ref{Eq_ADADAD}), with constants $(A_{n})$ and $(B_{n})$ obtained from the
law of $f$ in exactly the same way as in the iid case. In particular, the
assumptions of Theorem \ref{T_SLTgivesFSLT} is fulfilled, and
(\ref{Eq_FSLTinThm}) follows.
\end{proof}%

\vspace{0.2cm}%

Turning to the specific vector-valued scenario of Theorem \ref{T_FSL_GMddim},
we recall (\ref{Eq_RegVarOfStableTails}). It is then easy to see (Example
2.3.5 of \cite{ST}) that if a full stable vector is of the form $S=(S^{(1)}%
,\ldots,S^{(d)})$ with independent scalar $\alpha$-stable variables $S^{(j)}$
which satisfy $\mu(S^{(j)}>t)\sim c_{+}^{(j)}t^{-\alpha}\ell(t)$%
\ and$\ \mu(S^{(j)}<-t)\sim c_{-}^{(j)}t^{-\alpha}\ell(t)$ as $t\rightarrow
\infty$, then the spectral measure $\Lambda$ of $S$ coincides with
\begin{equation}
\Lambda_{(c_{+}^{(j)},c_{-}^{(j)})_{j=1}^{d}}=\sum_{j=1}^{d}\left(
c_{+}^{(j)}\delta_{e_{j}}+c_{-}^{(j)}\delta_{-e_{j}}\right)  \text{,}%
\label{Eq_IndepSpectralMeasure}%
\end{equation}
where $e_{1}=(1,0,\ldots,0),\ldots,e_{d}=(0,\ldots,0,1)$ are the standard
basis of $\mathbb{R}^{d}$.%

\vspace{0.2cm}%

\begin{proof}
[\textbf{Proof of Theorem \ref{T_FSL_GMddim}}]\textbf{(i)} Because of Theorem
\ref{T_SLTgivesFSLT}, the main point is to prove%
\begin{equation}
R_{n}:=\frac{1}{B_{n}}\,(\mathbf{S}_{n}(f)-A_{n})\overset{\mu}{\Longrightarrow
}S\text{ \quad as }n\rightarrow\infty\label{Eq_vhbjsdvbdfjbvjhsdjjjjjjjjj}%
\end{equation}
with $S=(S^{(1)},\ldots,S^{(d)})$ an independent tuple as in the statement of
the present theorem. In view of (\ref{Eq_DefBn}) we have $(B_{n}^{(j)}%
)=(B_{n})$ for each $j\in\{1,\ldots,d\}$, and Theorem \ref{T_FSL_GM}
immediately gives
\begin{equation}
\frac{1}{B_{n}}\,(\mathbf{S}_{n}(f^{(j)})-A_{n}^{(j)})\overset{\mu
}{\Longrightarrow}S^{(j)}\text{ \quad as }n\rightarrow\infty\text{.}%
\label{Eq_IndividComponent}%
\end{equation}
As a consequence, the sequence $(R_{n})_{n\geq1}$ of random vectors is tight,
and every distributional limit point is a full random vector. Therefore
(\ref{Eq_vhbjsdvbdfjbvjhsdjjjjjjjjj}) follows once we show that for every
sequence of indices $n_{k}\nearrow\infty$ such that $R_{n_{k}}\overset{\mu
}{\Longrightarrow}R$ for some random vector $R$, that limit necessarily
satisfies $R\overset{d}{=}S$.

Below we prove that $\mu\circ f^{-1}\in\mathrm{DOA}(S)$. By Proposition
\ref{Prop_GMvsIID} this implies that
\[
\frac{1}{B_{n}^{\ast}}\,(\mathbf{S}_{n}(f)-A_{n}^{\ast})\overset{\mu
}{\Longrightarrow}S\text{ \quad as }n\rightarrow\infty
\]
for a suitable sequence $(A_{n}^{\ast},B_{n}^{\ast})_{n\geq1}$ in
$\mathbb{R}^{d}\times(0,\infty)$. By the $d$-dimensional convergence of types
principle (Theorem 2.3.17 of \cite{MS}) and (\ref{Eq_IndividComponent}) this
ensures that $R\overset{d}{=}S$ as required.\newline\newline\textbf{(ii)} In
view of (\ref{Eq_IndepSpectralMeasure}) proving our earlier claim $\mu\circ
f^{-1}\in\mathrm{DOA}(S)$ only requires us to check that
\begin{equation}
\mu_{\{\left\Vert f\right\Vert >t\}}\left(  \frac{f}{\left\Vert f\right\Vert
}\in D\,\right)  \longrightarrow\frac{\Lambda_{(c_{+}^{(j)},c_{-}^{(j)}%
)_{j=1}^{d}}(D)}{\sum_{i=1}^{d}(c_{+}^{(i)}+c_{-}^{(i)})}\text{ \quad as
}t\rightarrow\infty\text{,}%
\end{equation}
for Borel $D\subseteq\mathbb{S}^{d-1}$ whose boundary \emph{in} $\mathbb{S}%
^{d-1}$ is disjoint from $\{\pm e_{j}\}_{j=1}^{d}$. This follows as soon as we
prove that for every $j\in\{1,\ldots,d\}$ and $\delta>0$,
\begin{equation}
\mu_{\{\left\Vert f\right\Vert >t\}}\left(  \mathrm{sgn}f^{(j)}=\pm1\text{ and
}\left\vert f^{(i)}\right\vert \leq\delta\left\vert f^{(j)}\right\vert \text{
for }i\neq j\,\right)  \longrightarrow\frac{c_{\pm}^{(j)}}{\sum_{i=1}%
^{d}(c_{+}^{(i)}+c_{-}^{(i)})}\text{.}\label{Eq_chdsjdvcjsvdaaaa}%
\end{equation}
To this end, fix $j$ and observe that for $t>\sqrt{d}M$ our assumption
(\ref{Eq_DisjointTailSupports}) ensures that
\[
\{f^{(j)}>t\}\subseteq\{\left\Vert f\right\Vert >t\text{ and }f^{(j)}%
>M\}\subseteq\{f^{(j)}>\sqrt{t^{2}-dM^{2}}\}\text{.}%
\]
Since $\sqrt{t^{2}-dM^{2}}\sim t$ the tail assumption
(\ref{Eq_AssmIndividTails}) yields
\begin{equation}
\mu(\left\Vert f\right\Vert >t\text{ and }f^{(j)}>M)\sim c_{+}^{(i)}%
t^{-\alpha}\ell(t)\text{ \quad as }t\rightarrow\infty\text{,}\label{Eq_xxxxb}%
\end{equation}
and in the same manner we obtain
\begin{equation}
\mu(\left\Vert f\right\Vert >t\text{ and }f^{(j)}<-M)\sim c_{-}^{(i)}%
t^{-\alpha}\ell(t)\text{ \quad as }t\rightarrow\infty\text{.}\label{Eq_xxxxbb}%
\end{equation}
On the other hand, (\ref{Eq_DisjointTailSupports}) also guarantees that for
$t>\sqrt{d}M$,%
\begin{equation}
\{\left\Vert f\right\Vert >t\}=\bigcup_{\sigma=\pm1}\bigcup_{j=1}%
^{d}\{\left\Vert f\right\Vert >t\text{ and }\left\vert f^{(j)}\right\vert
>M\text{ with }\mathrm{sgn}f^{(j)}=\sigma\}\text{,}\label{Eq_sdgfvgfggggggg}%
\end{equation}
which is a disjoint union. Hence,
\begin{equation}
\mu(\left\Vert f\right\Vert >t)\sim\left(  \sum_{i=1}^{d}(c_{+}^{(i)}%
+c_{-}^{(i)})\right)  \,t^{-\alpha}\ell(t)\text{ \quad as }t\rightarrow
\infty\text{.}\label{Eq_xxxxv}%
\end{equation}
But (\ref{Eq_sdgfvgfggggggg}) also implies that for any $\delta>0$ we have
\begin{align*}
\{\left\Vert f\right\Vert  & >t\text{ and }\left\vert f^{(i)}\right\vert
\leq\delta\left\vert f^{(j)}\right\vert \text{ for }i\neq j\,\text{\ while
}\mathrm{sgn}f^{(j)}=\pm1\}=\\
& =\{\left\Vert f\right\Vert >t\text{ and }\left\vert f^{(j)}\right\vert
>M\text{ with }\mathrm{sgn}f^{(j)}=\pm1\}\text{ \quad for }t\geq t_{0}%
(\delta)\text{.}%
\end{align*}
In view of (\ref{Eq_xxxxb}), (\ref{Eq_xxxxbb}) and (\ref{Eq_xxxxv}) this
validates (\ref{Eq_chdsjdvcjsvdaaaa}).
\end{proof}%

\vspace{0.2cm}%

\section{Two applications}

We illustrate the use of our general results in two specific
situations.\newline\newline\textbf{An arcsine law for some }$\mathbb{Z}%
$\textbf{-extensions of Gibbs-Markov systems.} We first mention an application
of Theorem \ref{T_FSL_GM} to the infinite measure preserving dynamical system
given by the dynamically defined random walk $(\mathbf{S}_{n}(f))_{n\geq0}$.
More precisely, let $(X,\mathcal{A},\mu,T,\xi)$ and $f$ be as in Theorem
\ref{T_FSL_GM}, where we assume for simplicity that $f$ is integer-valued.
Define the \emph{skew product} transformation
\begin{equation}
T_{f}:X\times\mathbb{Z\rightarrow}X\times\mathbb{Z}\text{, \quad}%
T_{f}(x,m):=(T(x),m+f(x))\text{.}%
\end{equation}
The system $(X\times\mathbb{Z},\mathcal{A}\otimes\mathcal{P}(\mathbb{Z}%
),\mu\otimes\iota,T_{f})$ is the $\mathbb{Z}$\emph{-extension of}
$(X,\mathcal{A},\mu,T)$ \emph{by} $f$ (see Chapter 8 of \cite{A}). The
infinite but $\sigma$-finite measure $\mu\otimes\iota$ (with $\iota$ denoting
counting measure on $\mathbb{Z}$) is invariant under $T_{f}$.

Various interesting properties of classical random walks hold for more general
infinite measure preserving systems. In the case of the simplest symmetric
random walk on $\mathbb{Z}$ several relevant quantities converge to the
arcsine law with distribution function $t\mapsto(2/\pi)\arcsin\sqrt{t}$,
$t\in\lbrack0,1]$. These well-known results (Chapter 3 of \cite{F}) can be
generalized in different ways. Classical probability theory provides, for
example, arcsine-type limit theorems for the time of the last visit to a
reference set of finite measure (\cite{Dy} and \cite{L2}), for occupation
times of infinite-measure sets separated from their complement by some
finite-measure set (\cite{L1}), and for occupation times of a half-line under
a random walk (\cite{Sp}). The first two types have been extended to more
general infinite measure preserving systems, see \cite{Th1}, \cite{Th2},
\cite{TZ}, \cite{Z6}, \cite{KZ} and \cite{SY}, while it sems that the last
(Spitzer's arcsine law, which requires a random walk or skew-product
structure) has not. We therefore take the opportunity to point out that the
FSLT above entails a result of this flavour: For $\rho\in(0,1)$ let
$\mathsf{A}_{\rho}$ denote a $[0,1]$-valued random variable which has the
\emph{generalized arcsine law with parameter} $\rho$, that is,
\begin{equation}
\Pr[0\leq\mathsf{A}_{\rho}\leq t]=\frac{\sin\rho\pi}{\pi}\int_{0}^{t}%
s^{\rho-1}(1-s)^{-\rho}\,ds\text{ \qquad for \thinspace}t\in\lbrack
0,1]\text{.}%
\end{equation}
We then obtain

\begin{theorem}
[\textbf{Arcsine Law for }$\mathbb{Z}$\textbf{-extensions of Gibbs-Markov
systems}]\label{T_SpitzerTypeAsinus}Let $(X,\mathcal{A},\mu,T,\xi)$ be a
mixing probability preserving Gibbs-Markov system, and $f:X\rightarrow
\mathbb{Z}$ an observable in the domain of attraction of some $\alpha$-stable
random variable $S$, $\alpha\in(0,2)$. Assume also that $\mu(\vartheta
_{f}>t)=O(\mu(\left\vert f\right\vert >t))$ and that $f$ is centered, $\int
f\,d\mu=0$, in case $\alpha>1$, and symmetrically distributed, $\mu
(f>t)=\mu(f<-t)$, in case $\alpha=1$. Then
\begin{equation}
\frac{1}{n}\sum_{k=0}^{n-1}1_{X\times\mathbb{N}}\circ T_{f}^{k}\overset{\eta
}{\Longrightarrow}\mathsf{A}_{\rho}\text{ \quad as }n\rightarrow
\infty\label{Eq_Asinus}%
\end{equation}
for all probability measures $\eta\ll\mu\otimes\iota$, where $\rho:=\Pr[S>0]$.
\end{theorem}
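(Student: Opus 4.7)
Since $T_f^k(x,m) = (T^k x,\, m + \mathbf{S}_k(f)(x))$, the target quantity is
\[
\phi_n(x,m) := \frac{1}{n}\sum_{k=0}^{n-1} 1_{X \times \mathbb{N}}(T_f^k(x,m)) = \frac{1}{n}\#\{0 \leq k < n : \mathbf{S}_k(f)(x) > -m\}.
\]
The centering/symmetry assumptions guarantee that the canonical translation sequence of Theorem \ref{T_FSL_GM} is $A_n \equiv 0$, so that $\mathsf{S}_t^{[n]}(x) = B_n^{-1}\mathbf{S}_{\lfloor tn\rfloor}(f)(x)$. Because this path is constant on each interval $[k/n,(k+1)/n)$, one obtains the exact identity $\phi_n(x,m) = \Phi_{-m/B_n}(\mathsf{S}^{[n]}(x))$, where $\Phi_c(\mathsf{x}) := \int_0^1 1_{\{\mathsf{x}_t > c\}}\,dt$ denotes the occupation-time functional at threshold $c$.

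First I would prove the conclusion for starting measures $\eta_m := \nu \otimes \delta_m$ with $\nu \ll \mu$ and $m \in \mathbb{Z}$ fixed. Theorem \ref{T_FSL_GM} together with Lemma \ref{L_StrongDistCge} yields $\mathsf{S}^{[n]} \overset{\nu}{\Longrightarrow} \mathsf{S}$ in $(\mathcal{D}([0,1],\mathbb{R}), \mathcal{J}_1)$, where $\mathsf{S}$ is the $\alpha$-stable L\'evy process with $\mathsf{S}_1 \overset{d}{=} S$. The functional $\Phi_0$ is $\mathcal{J}_1$-continuous at every path $\mathsf{x}$ whose zero level set $\{t \in [0,1]: \mathsf{x}_t = 0\}$ is Lebesgue-negligible; a Fubini argument using that each $\mathsf{S}_t$, $t > 0$, has a continuous stable distribution shows that $\mathsf{S}$ a.s.\ has this property, so the continuous mapping theorem gives $\Phi_0(\mathsf{S}^{[n]}) \overset{\nu}{\Longrightarrow} \Phi_0(\mathsf{S})$. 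The shift $-m/B_n \to 0$ is absorbed by a monotone sandwich: for any $\varepsilon > 0$ and all $n$ large, $\Phi_\varepsilon(\mathsf{S}^{[n]}) \leq \Phi_{-m/B_n}(\mathsf{S}^{[n]}) \leq \Phi_{-\varepsilon}(\mathsf{S}^{[n]})$; the outer expressions converge in law to $\Phi_{\pm\varepsilon}(\mathsf{S})$, which in turn tend to $\Phi_0(\mathsf{S})$ a.s.\ as $\varepsilon \searrow 0$ by bounded convergence. Hence $\phi_n \Longrightarrow \Phi_0(\mathsf{S})$ under $\eta_m$.

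The classical Spitzer-Rogozin arcsine law for L\'evy processes identifies the limit: the self-similarity of $\mathsf{S}$ makes $\Pr[\mathsf{S}_t > 0] = \rho$ constant in $t > 0$, and in this setting $\int_0^1 1_{\{\mathsf{S}_t > 0\}}\,dt \overset{d}{=} \mathsf{A}_\rho$. To pass from $\eta_m$ to an arbitrary probability $\eta \ll \mu \otimes \iota$, decompose $\eta = \sum_{m \in \mathbb{Z}} c_m (\nu_m \otimes \delta_m)$, where $c_m := \eta(X \times \{m\})$, $\sum_m c_m = 1$, and each $\nu_m$ is a probability on $X$ with $\nu_m \ll \mu$. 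Since $\phi_n$ is $[0,1]$-valued, the convergence under every $\nu_m \otimes \delta_m$ combines, via dominated convergence in $m$ applied to expectations of bounded continuous test functions, to give (\ref{Eq_Asinus}) for $\eta$. The main obstacle is step two, namely pushing $\mathcal{J}_1$-convergence of the partial sum processes through the occupation-time functional $\Phi_{-m/B_n}$ with a variable threshold; once this continuous-mapping step is secured, the identification of the limit and the extension to arbitrary $\eta$ are routine.
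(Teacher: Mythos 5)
Your proposal is correct and follows essentially the same route as the paper: reduce to fiber measures $\nu_m\otimes\delta_m$, rewrite the occupation fraction as an occupation-time functional of $\mathsf{S}^{[n]}$ (using that the centering/symmetry hypotheses force $A_n=0$), invoke Theorem \ref{T_FSL_GM}, push through the continuous mapping theorem via a.s.\ $\mathcal{J}_1$-continuity of the occupation functional, and identify the limit by the L\'{e}vy-process arcsine law (Bertoin, Theorem VI.13). The only cosmetic difference is that you handle the vanishing offset $m/B_n$ by a monotone sandwich of thresholds, whereas the paper absorbs it into the process, noting $\mathsf{S}^{[n]}+m/B_n\overset{\nu_m}{\Longrightarrow}\mathsf{S}$ since $B_n\rightarrow\infty$; both are valid.
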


\begin{proof}
\textbf{(i)} An arbitrary probability measure $\eta\ll\mu\otimes\iota$ can be
represented as $\eta=\sum_{m\in\mathbb{Z}}p_{m}\,\nu_{m}\otimes\delta_{m}$
(with $\delta_{m}$ denoting unit point mass at $m$) for probabilities $\nu
_{m}\ll\mu$ and weights $p_{m}\geq0$ with $\sum_{m\in\mathbb{Z}}p_{m}=1$. It
is straightforward that (\ref{Eq_Asinus}) follows once we prove for that all
for $m\in\mathbb{Z}$ and $\nu_{m}\ll\mu$,
\begin{equation}
\frac{1}{n}\sum_{k=0}^{n-1}1_{X\times\mathbb{N}}\circ T_{f}^{k}\overset
{\nu_{m}\otimes\delta_{m}}{\Longrightarrow}\mathsf{A}_{\rho}\text{ \quad as
}n\rightarrow\infty\text{.}\label{Eq_AsinusOnLevelm}%
\end{equation}
Fix $m\in\mathbb{Z}$ and any probability $\nu_{m}\ll\mu$. Clearly, $T_{f}%
^{n}(x,m)=(T^{n}(x),m+\mathbf{S}_{n}(f)(x))$ for $n\geq0$. As $f$ is centered
in case $\alpha>1$, and symmetrically distributed in case $\alpha=1$, the
canonical normalizing sequence $(A_{n},B_{n})$ for $[\alpha,c_{+},c_{-}]$ is
of the form $(0,B_{n})$. Therefore we see that $T_{f}^{n}(x,m)\in
X\times\mathbb{N}$ iff $\mathsf{S}_{k/n}^{[n]}(x)+m/B_{n}>0$, where
$\mathsf{S}^{[n]}$ is as in (\ref{Eq_DefPSprocesses2}). Defining
$\psi:\mathcal{D}[0,\infty)\rightarrow\mathbb{R}$ by $\psi(\mathsf{x}%
):=\int_{0}^{1}1_{(0,\infty)}(\mathsf{x}_{s})\,ds$, we get
\[
\frac{1}{n}\sum_{k=0}^{n-1}1_{X\times\mathbb{N}}\circ T_{f}^{k}(x,m)=\psi
(\mathsf{S}^{[n]}(x)+m/B_{n})\text{,}%
\]
so that (\ref{Eq_AsinusOnLevelm}) is equivalent to
\begin{equation}
\psi(\mathsf{S}^{[n]}+m/B_{n})\overset{\nu_{m}}{\Longrightarrow}%
\mathsf{A}_{\rho}\text{ \quad as }n\rightarrow\infty\text{.}%
\label{Eq_AsinusForSumOnLevel}%
\end{equation}
\textbf{(ii)} Now Theorem \ref{T_FSL_GM} applies to $f$, and since
$B_{n}\rightarrow\infty$, we have
\[
\mathsf{S}^{[n]}+m/B_{n}\overset{\nu_{m}}{\Longrightarrow}\mathsf{S}\text{
\quad in }(\mathcal{D}[0,\infty),\mathcal{J}_{1})\text{ \quad as }%
n\rightarrow\infty\text{,}%
\]
where $\mathsf{S}=(\mathsf{S}_{t})_{t\geq0}$ is the $\alpha$-stable motion
with $\mathsf{S}_{1}\overset{d}{=}S$. Observe then that while the map $\psi$
is not continuous (for example, it is discontinuous at $\mathsf{x}:=0$), we
have that
\begin{equation}
\psi\text{ is }\mathcal{J}_{1}\text{-continuous at almost every path of
}\mathsf{S}\text{.}\label{Eq_CtyOfpsi}%
\end{equation}
This follows by the argument presented in Appendix M15 of \cite{Bi}, because
$\Pr[\mathsf{S}_{s}=0]=\Pr[S=0]=0$ for all $s>0$. In view of
(\ref{Eq_CtyOfpsi}) the continuous mapping theorem (Theorem 2.7 in \cite{Bi})
immediately shows that
\begin{equation}
\mathsf{S}^{[n]}+m/B_{n}\overset{\nu_{m}}{\Longrightarrow}\mathsf{S}\text{
\quad implies \quad}\psi(\mathsf{S}^{[n]}+m/B_{n})\overset{\nu_{m}%
}{\Longrightarrow}\psi(\mathsf{S})\text{.}%
\end{equation}
By Theorem VI.13 of \cite{Be}, however, $\psi(\mathsf{S})\overset{d}%
{=}\mathsf{A}_{\rho}$, which completes the proof.
\end{proof}%

\vspace{0.2cm}%

\begin{remark}
Note that (\ref{Eq_Asinus}) holds for all probabilities $\eta\ll\mu
\otimes\iota$, even if $T_{f}$ is not ergodic w.r.t. $\mu\otimes\iota$. (Which
happens, for example, if $f$ takes its values in the set $2\mathbb{Z}$ of even
integers.) This is in contrast to the other types of arcsine laws for infinite
measure preserving systems mentioned before, where convergence to the same
limit law under all probabilities absolutely continuous w.r.t. the invariant
measure depends on ergodicity of the system (via the device discussed in
\cite{Z7}).
\end{remark}

\begin{remark}
The regularity condition $\mu(\vartheta_{f}>t)=O(\mu(\left\vert f\right\vert
>t))$ \ allows for integer-valued observables $f$ which need not be constant
on cylinders of any fixed rank.
\end{remark}%

\vspace{0.2cm}%

\noindent
\textbf{Excursions to cusps of intermittent maps.} Interval maps with
indifferent fixed points constitute a basic class of dynamical systems at the
edge of hyperbolicity which has been studied extensively in the last decades.
Their dynamics can be viewed as being driven by a uniformly hyperbolic induced
map, with delays caused by long excursions to the vicinity of the neutral
points. The results of the present paper can be used to clarify questions
about the asymptotic (in)dependence between excursion processes to individual
indifferent fixed points.

To limit technicalities, we focus on the prototypical situation of maps $T$ on
$X:=[0,1]$ with two full branches and neutral fixed points at $x=0$ and $x=1$.
Specifically, assume that there is some $c\in(0,1)$, defining cylinders
$Z_{1}:=(0,c)$ and $Z_{2}:=(c,1)$, such that (with $\lambda$ denoting Lebesgue measure)

\begin{enumerate}
\item[a)] $T\mid_{Z_{0}}$ is an increasing homeomorphism of $(0,c)$ onto
$(0,1)$,

\item[b)] $T\mid_{Z_{0}}$ extends to a $\mathcal{C}^{2}$-map of $(0,c]$ onto
$(0,1]$, with $T^{\prime}>1$ on $(0,c]$, while $T^{\prime}x\rightarrow1$ as
$x\searrow0$, and $T^{\prime}$ is increasing on some neighbourhood of $0$,

\item[c)] there is some decreasing function $\gamma:(0,c]\rightarrow
\lbrack0,\infty)$ for which
\[
\int_{0}^{c}\gamma\,d\lambda<\infty\text{ \quad and \quad}\left\vert
T^{\prime\prime}\right\vert \leq\gamma\text{ on }(0,c]\text{,}%
\]

\item[d)] the map $\widetilde{T}x:=1-T(1-x)$ on $\widetilde{Z}_{0}%
:=(0,\widetilde{c})$ with $\widetilde{c}:=1-c$ satisfies all the conditions
which a)-c) impose on $T\mid_{Z_{0}}$.
\end{enumerate}%

\noindent
Define $Y=Y(T):=(y_{0},y_{1})\subseteq X$ where $y_{0}$ is the unique point of
period $2$ in $Z_{0}$ and $y_{1}:=Ty_{0}$. Set $\varphi_{Y}(x):=\inf
\{n\geq1:T^{n}x\in Y\}$, the \emph{first return time} of $Y$, and let
$T_{Y}x:=T^{\varphi(x)}x$ define the \emph{first return map} $T_{Y}%
:Y\rightarrow Y$ of $Y$, which comes with a natural partition $\xi
_{Y}:=\{Y\cap T^{-1}Z_{j}\cap\{\varphi_{Y}=k\}:j\in\{0,1\},k\geq1\}$. The
function $\varphi_{Y}$ is obviously constant on elements of $\xi_{Y}$. Its
distribution depends on the details of $T$ near the indifferent fixed points,
which can be expressed in terms of $r_{0}(x):=Tx-x$, $x\in(0,c)$ and
$r_{1}(x):=\widetilde{T}x-x$, $x\in(0,\widetilde{c})$.

The following collects some basic facts about such maps.

\begin{proposition}
[\textbf{Basic ergodic properties of intermittent maps}]%
\label{P_BasiXIntermittMap}\textbf{a)} Any map $T$ satisfying a)-d) is
conservative ergodic and exact with respect to Lebesgue measure $\lambda$ and
preserves a $\sigma$-finite Borel measure $\mu\ll\lambda$ with a strictly
positive density $h$ continuous on $(0,1)$. Moreover, the first return map
$(Y,\mathcal{B}_{Y},\mu_{Y},T_{Y},\xi_{Y})$ is a probability preserving
Gibbs-Markov system.{}

\textbf{b)} Assume, in addition, that $\ell$ is regularly varying at $0^{+}$
and that $\kappa_{0},\kappa_{1},p\in(0,\infty)$ are constants such that
\begin{equation}
r_{j}(x)\sim\kappa_{j}\,x^{1+p}\ell(x)\text{ \quad as }x\searrow
0\text{,}\label{Eq_LocalBehAtFP}%
\end{equation}
for $j\in\{0,1\}$. Then
\begin{equation}
\mu(X)<\infty\text{ \quad iff \quad}\int_{0}^{c}\frac{x\,dx}{r_{0}(x)}%
<\infty\text{,}%
\end{equation}
and the functions $\varphi_{Y}^{(0)}:=1_{Y\cap T^{-1}Z_{0}}\varphi_{Y}$ and
$\varphi_{Y}^{(1)}:=1_{Y\cap T^{-1}Z_{1}}\varphi_{Y}$ on $Y$ which record the
durations of excursions from $Y$ to $Z_{0}$ and $Z_{1}$, respectively,
satisfy
\begin{equation}
\mu_{Y}\left(  \varphi_{Y}^{(j)}>m\right)  \sim Cc_{+}^{(j)}A^{-1}(m)\text{
\quad as }m\rightarrow\infty\text{,}\label{Eq_TailOfLocalReturnTimes}%
\end{equation}
where $A^{-1}$ is regularly varying of index $-1/p$ and asymptotically inverse
to $A(t):=t/r_{0}(t)$, the constant $C:=h(c)/[p^{1/p}\int_{Y}h\,d\mu
]\in(0,\infty)$ does not depend on $j$, and $c_{+}^{(0)}:=\kappa_{0}%
^{p}/T^{\prime}(c^{+})$ while $c_{+}^{(1)}:=\kappa_{1}^{p}/T^{\prime}(c^{-})$.
\end{proposition}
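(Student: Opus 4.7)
The plan is to reduce both parts to a careful analysis of the first-return map $(Y, T_Y, \xi_Y)$, exploiting that the choice $y_0, y_1$ of a period-$2$ point and its image yields an explicit Markov partition of $Y$. Throughout I use the decreasing sequence $y_0^{(n)} := (T|_{Z_0})^{-n}(y_0) \searrow 0$ and its mirror $y_1^{(n)} := (T|_{Z_1})^{-n}(y_1) \nearrow 1$, which exist by conditions a), b), and d).

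For part a), the Markov structure of $\xi_Y$ is immediate: any orbit starting in $Y\cap Z_1=(c,y_1)$ enters $Z_0$ in one step (since $T(c^+)=0$ and $T(y_1)=y_0$), remains there for a number of iterations determined by the level $y_0^{(n)}$ into which $Tx$ falls, and then exits through $Y$; a mirror statement holds for orbits starting in $Y\cap Z_0$. This gives a full Markov partition, and the big-image property holds because the two possible images are the half-intervals $(y_0,c)$ and $(c,y_1)$. Bounded distortion of $T_Y$ is the classical Adler condition: the chain rule $\log(T_Y)'=\sum_{k=0}^{\varphi_Y-1}\log T'\circ T^k$, together with the summable bound $|T''|\leq\gamma$ from condition c) and its mirror near $1$, yields the required uniform estimate for $D_{T_YZ}(\log v_Z')$. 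Standard Gibbs--Markov theory then produces the invariant probability $\mu_Y$ with density bounded between positive constants; the Kakutani tower formula $\mu:=\sum_{n\geq 0} T^n_\ast(\mu_Y|_{\{\varphi_Y>n\}})$ spreads it out to a $\sigma$-finite $T$-invariant measure $\mu\ll\lambda$ on $X$, and continuity plus positivity of its density $h$ on $(0,1)$ follow from Thaler's analysis of $\widehat{T}$ on continuous densities. Conservativity, ergodicity and exactness of $T$ then follow from mixing of $T_Y$ together with the fact that $\lambda$-a.e.\ point of $X$ eventually visits $Y$.

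For part b), I invoke the classical Thaler asymptotic for the pre-orbits. The recursion $y_0^{(n-1)}-y_0^{(n)}=r_0(y_0^{(n)})$, combined with $r_0(x)\sim\kappa_0\,x^{1+p}\ell(x)$, yields, after a Karamata-type integration and a de Bruijn self-referential inversion, $(y_0^{(n)})^p\ell(y_0^{(n)})\sim 1/(\kappa_0\,p\,n)$, equivalently $y_0^{(n)}\sim\kappa_0^{-1/p}A^{-1}(pn)$; likewise for $y_1^{(n)}$ with $\kappa_1$ in place of $\kappa_0$. Using the cylinder identification above, $\{\varphi_Y^{(0)}>m\}$ is (up to a single initial cylinder) the image under $(T|_{Z_1})^{-1}$ of $(0,y_0^{(m-1)})$; $C^2$-linearization of $T$ at $c^+$ gives Lebesgue measure $y_0^{(m-1)}/T'(c^+)\cdot(1+o(1))$. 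Multiplying by the continuous density $h(c)/\int_Y h\,d\mu$ and collecting the regular-variation factors $\kappa_0^{-1/p}$ and $p^{-1/p}$ to produce $c_+^{(0)}=\kappa_0^p/T'(c^+)$ and $C=h(c)/(p^{1/p}\int_Y h\,d\mu)$ yields (\ref{Eq_TailOfLocalReturnTimes}); the computation at the other fixed point is symmetric and produces the same $C$. The finiteness dichotomy $\mu(X)<\infty\iff\int_0^c x\,dx/r_0(x)<\infty$ is then immediate from Kac's formula $\mu(X)=\sum_{m\geq 0}\mu_Y(\varphi_Y>m)$ and the identity $\int_0^c x\,dx/r_0(x)\asymp\int_0^c dx/(x^p\ell(x))$, both sides being finite precisely when $p<1$, which is precisely when $\sum_m A^{-1}(m)<\infty$.

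The main obstacle I expect is not any single step but the constant-tracking across the Thaler self-referential asymptotic, the de Bruijn inversion defining $A^{-1}$, and the linearization of $T$ at $c^\pm$: one must verify that the exponent $\kappa_0^p$ in $c_+^{(0)}$ really emerges by combining the factor $\kappa_0^{-1/p}$ hidden in the asymptotic for $y_0^{(n)}$ with the identification of $A^{-1}$ through $A(t)=t/r_0(t)$, and one must pair each excursion family with the correct one-sided derivative of $T$ at $c$ (excursions into $Z_0$ being initiated from $Y\cap Z_1$, so that $T'(c^+)$ is the right quantity, and symmetrically for $Z_1$). Once this bookkeeping is done, the remainder is a sequence of essentially routine invocations of standard Thaler-type lemmas.
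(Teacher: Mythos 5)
The paper itself does not prove Proposition \ref{P_BasiXIntermittMap}; it only cites \cite{A}, \cite{Goue}, \cite{Th0a}, \cite{Th3} and \cite{ZStochDyn}. Your outline reconstructs the standard argument behind those references, and its architecture is sound: the partition of $Y=(y_0,y_1)$ generated by the period-$2$ point, Adler's distortion condition via c), the Kakutani tower for $\mu$, Kac's formula for the finiteness dichotomy, and the Thaler recursion $y_0^{(n-1)}-y_0^{(n)}=r_0(y_0^{(n)})$ for the tails of $\varphi_Y^{(j)}$ are exactly the right ingredients. (One small slip in part a): each cylinder $Y\cap T^{-1}Z_j\cap\{\varphi_Y=k\}$ is mapped by $T_Y$ onto all of $Y$, not onto a half-interval; this only makes the big image and Markov properties easier.)

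The genuine gap is the constant identification in (\ref{Eq_TailOfLocalReturnTimes}), which you yourself single out as the main obstacle and then do not carry out --- and your stated intermediate asymptotics do not combine to the claimed constants. From $(y_0^{(n)})^p\ell(y_0^{(n)})\sim 1/(\kappa_0 pn)$ and $A(t)=t/r_0(t)\sim 1/(\kappa_0 t^{p}\ell(t))$ one gets $A(y_0^{(n)})\sim pn$, hence $y_0^{(n)}\sim A^{-1}(pn)\sim p^{-1/p}A^{-1}(n)$: the factor $\kappa_0$ is already absorbed into $A$, so your formula $y_0^{(n)}\sim\kappa_0^{-1/p}A^{-1}(pn)$ counts it twice (it would be correct only for the inverse of $t\mapsto 1/(t^{p}\ell(t))$, which is not the $A$ of the statement). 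Feeding the corrected asymptotics into $\lambda(\{\varphi_Y^{(0)}>m\})\sim y_0^{(m-1)}/T^{\prime}(c^{+})$ yields $\mu_Y(\varphi_Y^{(0)}>m)\sim h(c)\,p^{-1/p}\,\mu(Y)^{-1}\,T^{\prime}(c^{+})^{-1}A^{-1}(m)$, i.e.\ a prefactor $1/T^{\prime}(c^{+})$ with no power of $\kappa_0$; and for $j=1$, where the excursion lengths are governed by $r_1$ while $A$ is still built from $r_0$, the same computation produces the extra factor $(\kappa_0/\kappa_1)^{1/p}$, not $\kappa_1^{p}$. So the step ``collecting the regular-variation factors to produce $c_{+}^{(0)}=\kappa_0^{p}/T^{\prime}(c^{+})$'' does not go through as written: you must either redo this bookkeeping and exhibit where $\kappa_j^{p}$ actually comes from, or reconcile your normalization of $A^{-1}$ (and $\ell$) with the one intended in the statement and in the cited references. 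This matters, because these constants propagate into Theorem \ref{T_IntermittExcursionProc} through the characteristic functions of the limiting L\'{e}vy processes. Everything else in the proposal is a correct, if citation-level, account of standard facts.
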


\begin{proof}
These are well known facts, see \cite{A}, \cite{Goue}, \cite{Th0a}, \cite{Th3}
and \cite{ZStochDyn}.
\end{proof}%

\vspace{0.2cm}%

It is the joint behaviour of consecutive excursions from $Y$ which determines
interesting aspects of the long-term behaviour of $T$. For example, in the
infinite measure case, the Arcsine Law for occupation times of neighbourhoods
of the neutral fixed points (first established in \cite{Th2}, see also
\cite{TZ} and \cite{Z6}) is best viewed as a consequence of the asymptotic
independence between the excursion processes to $x=0$ and $x=1 $,
respectively. This is made explicit in \cite{Se}, which contains the
$\alpha\in(0,1)$ case of Theorem \ref{T_IntermittExcursionProc} below. The
latter shows that this type of asymptotic independence also holds in finite
measure situations as soon as the limit is not Gaussian.%

\vspace{0.2cm}%

In the situation of Proposition \ref{P_BasiXIntermittMap} b) set $\alpha:=1/p$
and let $(A_{n}^{(j)},B_{n}^{(j)})_{n\geq1}$ be the canonical normalizing
sequence for $[\alpha,c_{+}^{(j)},0]$, and let $S^{(j)}$ be the corresponding
$\alpha$-stable variable as in (\ref{Eq_TheFourierTransformOfG}), while
$B_{n}:=B_{n}^{(1)}$. Consider the processes $\mathsf{S}^{(j)[n]}%
=(\mathsf{S}_{t}^{(j)[n]})_{t\geq0}$ of excursions to $Z_{j}$ given by
\[
\mathsf{S}^{(j)[n]}:X\rightarrow\mathcal{D}([0,\infty),\mathbb{R})\text{,
\quad}\mathsf{S}_{t}^{(j)[n]}:=\frac{1}{B_{n}}\left(  \sum_{k=0}^{\left\lfloor
tn\right\rfloor -1}\varphi_{Y}^{(j)}\circ T_{Y}^{k}-\frac{\left\lfloor
tn\right\rfloor }{n}A_{n}^{(j)}\right)  \text{.}%
\]

\begin{theorem}
[\textbf{Asymptotically independent excursion processes}]%
\label{T_IntermittExcursionProc}Suppose that $T$ satisfies a)-d), and
(\ref{Eq_LocalBehAtFP}) with $p>2$. Then,
\[
(\mathsf{S}^{(0)[n]},\mathsf{S}^{(1)[n]})\overset{\nu}{\Longrightarrow
}(\mathsf{S}^{(0)},\mathsf{S}^{(1)})\text{ \quad in }(\mathcal{D}%
([0,\infty),\mathbb{R}^{2}),\mathcal{J}_{1})
\]
for any probability $\nu\ll\lambda$, where $\mathsf{S}^{(0)},\mathsf{S}^{(1)}$
are independent scalar $\alpha$-stable L\'{e}vy processes with $\alpha
:=1/p\in(0,2)$ and $\mathsf{S}_{1}^{(j)}\overset{d}{=}S^{(j)}$ characterized
by
\[
\mathbb{E}[e^{itS^{(j)}}]=e^{-c_{\alpha}c_{+}^{(j)}\left\vert t\right\vert
^{\alpha}(1-i\,\mathrm{sgn}(t)\,\omega(\alpha,t))}\text{, \quad}t\in
\mathbb{R}\text{.}%
\]

\end{theorem}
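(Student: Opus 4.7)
The plan is to apply Theorem \ref{T_FSL_GMddim} to the induced Gibbs--Markov system $(Y,\mathcal{B}_Y,\mu_Y,T_Y,\xi_Y)$ (supplied by Proposition \ref{P_BasiXIntermittMap} a)) with the two-dimensional observable $f:=(\varphi_Y^{(0)},\varphi_Y^{(1)}):Y\to\mathbb{R}^{2}$, and then to transfer the resulting FSLT from $\mu_Y$ on $Y$ to arbitrary $\nu\ll\lambda$ on $X$.

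First, I would verify the hypotheses of Theorem \ref{T_FSL_GMddim} for $f$ over $(Y,\mu_Y,T_Y,\xi_Y)$. By the very definition of $\xi_Y$, the return time $\varphi_Y$ is constant on each of its cylinders, hence so is each $\varphi_Y^{(j)}=1_{Y\cap T^{-1}Z_j}\varphi_Y$; consequently $\vartheta_{\varphi_Y^{(j)}}\equiv 0$, and the regularity condition (\ref{Eq_ConditionRegularityTail}) is trivially satisfied. Proposition \ref{P_BasiXIntermittMap} b) yields $\mu_Y(\varphi_Y^{(j)}>t)\sim c_{+}^{(j)}\,\widetilde{\ell}(t)\,t^{-\alpha}$ with $\alpha:=1/p\in(0,\tfrac{1}{2})$ and a common slowly varying factor $\widetilde{\ell}(t):=C\,t^{\alpha}A^{-1}(t)$; since $\varphi_Y^{(j)}\ge 0$ forces $c_{-}^{(j)}=0$, this matches (\ref{Eq_AssmIndividTails}) exactly. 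Furthermore, $\{\varphi_Y^{(0)}>0\}\subseteq Y\cap T^{-1}Z_0$ and $\{\varphi_Y^{(1)}>0\}\subseteq Y\cap T^{-1}Z_1$ are disjoint, so the disjoint-tail-supports hypothesis (\ref{Eq_DisjointTailSupports}) holds with any $M\ge 0$. Because $\alpha<1$ forces $A_n^{(j)}=0$, Theorem \ref{T_FSL_GMddim} then delivers the claimed $\mathcal{J}_{1}$-convergence of $\mathsf{S}^{[n]}:=(\mathsf{S}^{(0)[n]},\mathsf{S}^{(1)[n]})$ to the independent pair $(\mathsf{S}^{(0)},\mathsf{S}^{(1)})$ in $(\mathcal{D}([0,\infty),\mathbb{R}^{2}),\mathcal{J}_1)$ for every probability $\nu'\ll\mu_Y$ on $Y$.

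To extend from $\nu'\ll\mu_Y$ on $Y$ to arbitrary $\nu\ll\lambda$ on $X$, set $\varphi_Y^{+}(x):=\inf\{n\ge 0:T^{n}x\in Y\}$ (finite $\lambda$-a.e.\ by conservativity of $T$) and $\pi(x):=T^{\varphi_Y^{+}(x)}x\in Y$. The iterates $T_Y^{k}$ appearing in the definition of $\mathsf{S}^{(j)[n]}$ depend on $x$ only through $\pi(x)$, so $\mathsf{S}^{[n]}(x)=\mathsf{S}^{[n]}(\pi(x))$ pathwise, and the law of $\mathsf{S}^{[n]}$ under $\nu$ coincides with its law under $\pi_{\ast}\nu$ on $Y$. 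Non-singularity of $T$ w.r.t.\ $\lambda$, combined with the inclusion $\pi^{-1}(N)\subseteq\bigcup_{n\ge 0}T^{-n}N$, the $T$-invariance of $\mu$, and the equivalence $\lambda\sim\mu$ on $(0,1)$ arising from $h>0$ continuous, shows that $\mu_Y(N)=0$ entails $\nu(\pi^{-1}(N))=0$; hence $\pi_{\ast}\nu\ll\mu_Y$, and the previous paragraph concludes the proof.

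The heart of the argument is handed off entirely to Theorem \ref{T_FSL_GMddim}: the piecewise constancy of $\varphi_Y$ on $\xi_Y$-cylinders trivialises the regularity hypothesis, and the disjoint tail supports are built into the definitions of $\varphi_Y^{(0)}$ and $\varphi_Y^{(1)}$. The only point requiring any care is the $(Y,\mu_Y)\to(X,\lambda)$ transfer in the last paragraph, which however reduces to standard non-singular ergodic theory and presents no substantive obstacle.
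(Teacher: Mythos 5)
Your proposal is correct and follows essentially the same route as the paper: verify the tail, regularity and disjoint-tail-support hypotheses of Theorem \ref{T_FSL_GMddim} for $(\varphi_{Y}^{(0)},\varphi_{Y}^{(1)})$ over the induced Gibbs--Markov system $(Y,\mu_{Y},T_{Y},\xi_{Y})$ and apply that theorem --- the paper's own proof is exactly this, and is even terser, leaving the transfer from $\mu_{Y}$ to arbitrary $\nu\ll\lambda$ implicit where you spell it out. The only small inaccuracy is the claim that $\mathsf{S}^{[n]}(x)=\mathsf{S}^{[n]}(\pi(x))$ pathwise: for $x\notin Y$ the $k=0$ summand vanishes, so the two paths differ by a time shift of $1/n$, which is not zero but is $\mathcal{J}_{1}$-negligible by the same kind of estimate as in Lemma \ref{L_StrongDistCge}, so your transfer argument still goes through.
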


\begin{proof}
By the tail estimate (\ref{Eq_TailOfLocalReturnTimes}) in Proposition
\ref{P_BasiXIntermittMap}, the function $\varphi_{Y}^{(j)}\geq0$ is in the
domain of attraction of the $\alpha$-stable variable $S^{(j)}$, as
\[
\Pr[S^{(j)}>t]=(c_{+}^{(j)}+o(1))t^{-\alpha}\ell^{\ast}(t)\text{\quad
and}\quad\Pr[S^{(j)}<-t]=O(t^{-\alpha}\ell^{\ast}(t))\text{,}%
\]
with $\ell^{\ast}(t):=Ct^{1/p}A^{-1}(t)$, which corresponds to the case
$c_{-}=0$ in (\ref{Eq_DomainAttr}), meaning that $\underline{\beta}%
=c_{+}^{(j)}$ and $\beta=1$ in (\ref{Eq_TheFourierTransformOfG}).

On the other hand, $\{\varphi_{Y}^{(0)}>0\}\cap\{\varphi_{Y}^{(1)}%
>0\}=\varnothing$, and Theorem \ref{T_FSL_GMddim} applies.
\end{proof}%

\vspace{0.2cm}%

\begin{remark}
By routine arguments this theorem extends to more general Markovian interval
maps with finitely many neutral fixed (or periodic) points at which the map
satisfies conditions analogous to b) and c) above.
\end{remark}


\begin{thebibliography}{9999}                                                                                             %
\bibitem[A]{A}J. Aaronson: \emph{An introduction to infinite ergodic theory}.
AMS 1997.

\bibitem[AD]{AD}J. Aaronson, M. Denker: \emph{Local limit theorems for partial
sums of stationary sequences generated by Gibbs-Markov maps}. Stoch. Dyn.
\textbf{1} (2001), 193-237.

\bibitem[ADSZ]{ADSZ}J. Aaronson, M. Denker, O. Sarig, R. Zweim\"{u}ller:
\emph{Aperiodicity of cocycles and conditional local limit theorems}. Stoch.
Dyn. \textbf{4} (2004), 31-62.

\bibitem[Be]{Be}J. Bertoin: \emph{L\'{e}vy Processes}. Cambridge University
Press 1996.

\bibitem[Bi]{Bi}P. Billingsley: \emph{Convergence of Probability Measures}.
2$^{nd}$ ed., Wiley 1999.

\bibitem[BGT]{BGT}N. H. Bingham, C. M. Goldie, J. L. Teugels: \emph{Regular
Variation}\textit{.} Cambridge University Press 1989.

\bibitem[CFKM]{CFKM}I. Chevyrev, P.K. Friz, A. Korepanov, I. Melbourne:
\emph{Superdiffusive limits for deterministic fast-slow dynamical systems}.
Probab. Theory Relat. Fields \textbf{178} (2020), 735-770.

\bibitem[D]{Dy}E.B. Dynkin: \emph{Some limit theorems for sums of independent
random variables with infinite mathematical expectation}. Selected Transl. in
Math. Statist. and Probability \textbf{1} (1961), 171-189.

\bibitem[F]{F}W. Feller: \emph{An introduction to probability theory and its
applications, Vol. I}\textit{.} 3$^{rd}$ ed., Wiley, New York, 1968.

\bibitem[GS]{GS}I.I. Gikhman, A.V. Skorohod: \emph{Introduction to the theory
of random processes}. W.B. Saunders, Philadelphia, 1969. reprint Dover 1996.

\bibitem[GK]{GK}B.V. Gnedenko, A.N. Kolmogorov: \emph{Limit Distributions for
Sums of Independent Random Variables}. Addison-Wesley 1968.

\bibitem[G1]{Goue}S. Gou\"{e}zel: \emph{Central limit theorem and stable laws
for intermittent maps}. Probab. Theory Relat. Fields \textbf{128} (2004), 82-122.

\bibitem[G2]{Goue2}S. Gou\"{e}zel: \emph{Characterization of weak convergence
of Birkhoff sums for Gibbs-Markov maps}. Israel J. Math. \textbf{180} (2010), 1-41.

\bibitem[KZ]{KZ}D. Kocheim, R. Zweim\"{u}ller: \emph{A joint limit theorem for
compactly regenerative ergodic transformations}. Studia Math. \textbf{203}
(2011), 33-45.

\bibitem[L1]{L2}J. Lamperti: \emph{Some limit theorems for stochastic
processes}. J. Math. Mech. \textbf{7} (1958), 433-448.

\bibitem[L2]{L1}J. Lamperti: \emph{An occupation time theorem for a class of
stochastic processes}. Trans. Amer. Math. Soc. \textbf{88} (1958), 380-387.

\bibitem[MS]{MS}M.M. Meerschaert, H.-P. Scheffler: \emph{Limit Distributions
for Sums of Independent Random Vectors}\textit{.} Wiley, New York, 2001.

\bibitem[MZ]{MZ}I. Melbourne, R. Zweim\"{u}ller: \emph{Weak convergence to
stable L\'{e}vy processes for nonuniformly hyperbolic dynamical systems}. Ann.
Inst. H. Poincare \textbf{51} (2015), 545-556.

\bibitem[MPU]{MPU}F. Merlev\`{e}de, M. Peligrad, S. Utev: \emph{Recent
advances in invariance principles for stationary sequences}. Probability
Surveys \textbf{3} (2006), 1-36.

\bibitem[R]{R}E.L. Rvaceva: \emph{On domains of attraction of
multi-dimensional distributions}. in: Select. Transl. Math. Statist. and
Probability, AMS, Vol. \textbf{2} (1962), 183-205.

\bibitem[ST]{ST}G. Samorodnitsky, M.S. Taqqu: \emph{Stable non-Gaussian Random
Processes}. Chapman \& Hall 1994.

\bibitem[SY]{SY}T. Sera, K. Yano: \emph{Multiray generalization of the arcsine
laws for occupation times of infinite ergodic transformations}. Trans. Amer.
Math. Soc. \textbf{372} (2019), 3191-3209.

\bibitem[Se]{Se}T. Sera: \emph{Functional limit theorem for occupation time
processes of intermittent maps}. Preprint, arXiv:1810.04571

\bibitem[S1]{Sk}A.V. Skorohod: \emph{Limit theorems for stochastic processes}.
Theor. Probability Appl. \textbf{1} (1956), 261-290.

\bibitem[S2]{Sk2}A.V. Skorohod: \emph{Limit theorems for stochastic processes
with independent increments}. Theor. Probability Appl. \textbf{2} (1957), 138-171.

\bibitem[Sp]{Sp}F. Spitzer: \emph{A combinatorial lemma and its application to
probability theory}. Trans. Amer. Math. Soc. \textbf{82} (1956), 323-339.

\bibitem[Th1]{Th0a}M. Thaler: \emph{Estimates of the invariant densities of
endomorphisms with indifferent fixed points}\textit{.} Isr. J. Math.
\textbf{37} (1980), 303-314.

\bibitem[Th2]{Th0b}M. Thaler: \emph{Transformations on [0,1] with infinite
invariant measures}\textit{.} Isr. J. Math. \textbf{46} (1983), 67-96.

\bibitem[Th3]{Th1}M. Thaler: \emph{The Dynkin-Lamperti Arc-Sine Laws for
Measure Preserving Transformations}\textit{.} Trans. Amer. Math. Soc.
\textbf{350} (1998), 4593-4607.

\bibitem[Th4]{Th2}M. Thaler: \emph{A limit theorem for sojourns near
indifferent fixed points of one-dimensional maps}. Ergod. Th. \& Dynam. Sys.
\textbf{22} (2002), \ 1289-1312.

\bibitem[Th5]{Th3}M. Thaler: \emph{Asymptotic distributions and large
deviations for iterated maps with an indifferent fixed point}. Stoch. Dyn.
\textbf{5} (2005), 425-440.

\bibitem[TZ]{TZ}M. Thaler, R. Zweim\"{u}ller: \emph{Distributional limit
theorems in infinite ergodic theory}. Probab. Theory Relat. Fields
\textbf{135} (2006), 15-52.

\bibitem[Ty1]{T1}M. Tyran-Kaminska: \emph{Convergence to L\'{e}vy stable
processes under some weak dependence conditions}. Stoch. Proc. Appl.
\textbf{120} (2010), 1629-1650.

\bibitem[Ty2]{T2}M. Tyran-Kaminska: \emph{Weak convergence to L\'{e}vy stable
processes in dynamical systems}. Stoch. Dyn. \textbf{10} (2010), 263-289.

\bibitem[W]{W}W. Whitt: \emph{Stochastic-Process limits}. Springer 2002.

\bibitem[Y]{Y}K. Yosida: \emph{Mean ergodic theorem in Banach spaces}. Proc.
Imp. Acad. Tokyo \textbf{14} (1938), 292-294.

\bibitem[Z1]{ZStochDyn}R. Zweim\"{u}ller: \emph{Stable limits for probability
preserving maps with indifferent fixed points}. Stoch. Dyn. \textbf{3} (2003), 83-99.

\bibitem[Z2]{Z6}R. Zweim\"{u}ller: \emph{Infinite measure preserving
transformations with compact first regeneration}. Journal d'Analyse
Mathematique \textbf{103} (2007), 93-131.

\bibitem[Z3]{Z7}R. Zweim\"{u}ller: \emph{Mixing limit theorems for ergodic
transformations}. Journal of Theoretical Probability \textbf{20} (2007), 1059-1071.
\end{thebibliography}
\end{document}